\newtheorem{proposition}{Proposition}
 \newtheorem{cor}[proposition]{Corollary}
\newtheorem{assumption}{Assumption}
\newtheorem{remark}{Remark}
\newtheorem{theorem}{Theorem}
\newtheorem{lemma}{Lemma}
\numberwithin{equation}{section} 
\newcommand{\Rb}{\mathbb{R}}
\newcommand{\Cc}{\mathcal{C}}
\newcommand{\Ec}{\mathcal{E}}
\newcommand{\Fc}{\mathcal{F}}
\newcommand{\Oc}{\mathcal{O}}
\newcommand{\Mc}{\mathcal{M}}
\newcommand{\Qc}{\mathcal{Q}}
\newcommand{\Pc}{\mathcal{P}}
\newcommand{\Vc}{\mathcal{V}}
\newcommand{\pa}{\partial}
\newcommand{\Ls}{\mathscr{L}}
\newcommand{\Ns}{\mathscr{N}}
\newcommand{\Fs}{\mathscr{F}}
\newcommand{\Ds}{\mathscr{D}}
 \newcommand{\beq}{\begin{equation}}
 \newcommand{\eeq}{\end{equation}}
 \newcommand{\bseq}{\begin{subequations}}
 \newcommand{\eseq}{\end{subequations}}
 \newcommand{\bal}{\begin{aligned} }
 \newcommand{\eal}{\end{aligned}}
 \newcommand{\bali}{\begin{align}}
 \newcommand{\eali}{ \end{align}}
  \newcommand{\tr}{\mathrm{tr}}
   \newcommand{\TT}{\mathbf{T}}
 \newcommand{\RR}{\mathbf{R}}
 \def\cR{{\mathcal R}}
   \let \mfr = \mathfrak
\let\lam=\lambda
 \let\f=\frac
  \let\D=\Delta
  \let \al = \alpha
 \let \les = \lesssim
 \let \gtr = \gtrsim
 \let\Om=\Omega
 \def\na{\nabla}
 \def\la{\langle}
 \def\ra{\rangle}
 \def \mr{\mathring}
  \let\d=\delta
  \let\B = \Big
  \let \kp = \kappa
  \let\e=\epsilon
  \let\td = \tilde
  \let\Ga= \Gamma
  \let\im= \imath
  \let \be = \beta
  \let \th = \theta
  \let \Th = \Theta
  \let\Lam=\Lambda
  \def\cE{{\mathcal E}}
   \let\s=\sigma
   \def\cH{{\mathcal H}}
  \def\cP{{\mathcal P}}
 \def\one{\mathbf{1}}
 \def \msD{  \mathscr{D}}
\def \msN{ \mathscr{N} }
\def \msL{ \mathscr{L} }
\title{On the stability of blowup solutions to the complex Ginzburg-Landau equation in $\Rb^d$}
\date{\today}
\begin{document}
 \author{Jiajie Chen\footnote{Courant Institute of Mathematical Sciences, New York University, New York, NY. Email: \href{jiajie.chen@cims.nyu.edu}{jiajie.chen@cims.nyu.edu}}, 
 Thomas Y. Hou\footnote{Applied and Computational Mathematics, Caltech, Pasadena, CA. Email: \href{hou@cms.caltech.edu}{hou@cms.caltech.edu}}, 
 Van Tien Nguyen\footnote{Department of Mathematics, National Taiwan University, Taiwan. Email: \href{vtnguyen@ntu.edu.tw}{vtnguyen@ntu.edu.tw}}, 
 Yixuan Wang\footnote{Applied and Computational Mathematics, Caltech, Pasadena, CA. Email: \href{roywang@caltech.edu}{roywang@caltech.edu} }}

\maketitle

\begin{abstract}

{
  \noindent Building upon the idea in \cite{HNWarXiv24}, we establish stability of 
the type-I blowup with log correction for the complex Ginzburg-Landau equation. In the amplitude-phase representation, a generalized dynamic rescaling formulation is introduced, with modulation parameters capturing the spatial translation and rotation symmetries of the equation and novel additional modulation parameters perturbing the scaling symmetry. 
This new formulation provides enough degrees of freedom to impose normalization conditions on the rescaled solution, completely eliminating the unstable and neutrally stable modes of the linearized operator around the blowup profile. It enables us to establish the full stability of the blowup by enforcing vanishing conditions via the choice of normalization and using weighted energy estimates, without relying on a topological argument or a spectrum analysis. The log correction for the blowup rate is captured by the energy estimates and refined estimates of the modulation parameters.

}

    \end{abstract}
\section{Introduction}

We consider the complex Ginzburg-Landau equation 
\begin{equation}
    \label{cgl} 
    \psi_t=(1+\imath\beta)\Delta \psi + (1+\imath \delta)|\psi|^{p-1}\psi- \gamma \psi,  \tag{CGL}
\end{equation}
where $\psi(t): \Rb^d \to \mathbb{C}$, $\beta, \delta, \gamma$ are real constants and $p > 1$. The model equation \eqref{cgl} was first derived by Stewardson and Stuart in \cite{SSjfm71} (see also \cite{DHSjfm74}, \cite{PSbook85}) to examine afresh the problem of plane Poiseuille flow in a wave system. The equation is also used to describe various phenomena in many fields, among which are nonlinear optics with dissipation \cite{MHKnonl98}, turbulent behavior \cite{BLNphysd86}, Rayleigh-Bénard convection or Taylor-Couette flow in hydrodynamics \cite{diprima1971non}, \cite{NWjfm69}, \cite{newell1971review}, reaction-diffusion systems \cite{hagan1982spiral}, \cite{iooss1992time}, \cite{scheel1998bifurcation}, \cite{schneider1998hopf}, the theory of superconductivity \cite{bethuel1994ginzburg}, \cite{chapman1992macroscopic}, \cite{du1992analysis}, \cite{ginzburg2009theory}, etc. For further details on the physical background and derivation of the complex Ginzburg-Landau equation, we refer to the surveys \cite{AKaps02}, \cite{Mmono02}, and the references therein.

The local Cauchy problem has been well established through a semigroup approach in the works  \cite{GVphysd96,  GVmr97, GVcmp97}. A solution to \eqref{cgl} blows up in finite time if $\lim_{t \to T} \|\psi(t)\|_{L^\infty(\Rb^d)} = +\infty$ for some $ T < + \infty$.
Singularity formation has been intensively studied for the two limiting models of \eqref{cgl}: the classical nonlinear heat equation in the limit $\beta, \delta, \gamma \to 0$,
\begin{equation}\label{nlh}
\pa_t \psi = \Delta \psi + |\psi|^{p-1}\psi, \quad \psi(t): x\in \Rb^d \to \Rb, \tag{NLH}
\end{equation}
and the nonlinear Schr\"odinger equation in the limit $\beta, |\delta| \to \infty$,
\begin{equation}\label{nls}
\imath \pa_t \psi + \Delta \psi +\mu |\psi|^{p-1}\psi = 0, \quad \mu = \pm 1. \tag{NLS}
\end{equation}
{We refer to \cite{QSbook07} and \cite{Fbook15} for intensive
lists of references from the early 1960s concerning blowup results of these two equations.} However, singularities in \eqref{cgl} (collapse, chaotic, or blowup) are much less understood in comparison with what have been established for \eqref{nlh} and \eqref{nls}. The study of singularity in \eqref{cgl}  is a challenging problem due to the lack of variational structure, no maximum principle, non-self-adjoint linearized operator, etc.  Nevertheless, singularity in \eqref{cgl} was experimentally reported in \cite{KBSaps88}, \cite{KSALphysD95} where the authors described an extensive series of experiments on traveling-wave convection in an ethanol/water mixture, and collapse solutions were observed. We have a sharp sufficient criteria for collapse in \eqref{cgl} for the case of subcritical bifurcation described in \cite{Taps93}. In \cite{HDphysA98}, the authors used the modulation theory and numerical observations to show that the collapse dynamic is governed in the \eqref{cgl} limit of the $L^2$-critical cubic \eqref{nls}.  For the existence of blowup, there are the results of  \cite{CDFjee14} and \cite{CDFWsiam13} 
in which the authors studied \eqref{cgl} for the case $\beta = \delta$. In \cite{BRWejam13} and \cite{RScpam01}, the authors gave some evidence for the existence of a radial solution that blows up in a self-similar way, their arguments were based on the combination of rigorous analysis and numerical computations. In \cite{ZAAihn98} and \cite{MZjfa08}, the authors rigorously constructed particular examples of initial data for which the solutions of \eqref{cgl} blow up in finite time  for $(\beta, \delta)$ in the \textit{subcritical} range
\begin{equation}\label{def:subCri}
{\flat_* := p - \delta^2  - \beta \delta (p+1), \quad \flat_* > 0}  \quad \textup{(subcritical range)}.
\end{equation}
The constructed blowup solution in the subcritical case admits the asymptotic behavior 
\begin{equation}\label{eq:asyGLsub}
\psi(x,t) \sim  |\log (T-t)|^{\imath \mu} \Big[(T-t) \big( p -1   +  c_p |Z|^2\big) \Big]^{-\frac{1 + \imath \delta}{p-1}}, \quad Z = \frac{x}{\sqrt{(T-t)|\log (T-t)|}},
\end{equation}
where the constants $c_p$ and $\mu$ are given by
\begin{equation}
\label{def:constcp}c_p = \frac{(p-1)^2}{4\flat_*} > 0\,,\quad \mu = -\frac{\beta(1 + \delta^2)}{2\flat_*}\,.
\end{equation} The spectral analysis for a non-self-adjoint operator developed in \cite{MZjfa08} can be implemented for other problems where an energy-type method is not applicable, see for example \cite{GNZihp18}. The blowup for the critical range, $\flat_* = 0$, has been recently solved in \cite{NZarma18}, \cite{DNZmems23} following the approach of \cite{MZjfa08}. 
The blowup for \eqref{cgl} in the supercritical range, $\flat_* < 0$, has recently been solved in \cite{DNZarxiv23} for the special choice $\beta = 0$. 
We remark that in the mentioned works (\cite{ZAAihn98}, \cite{MZjfa08}, \cite{NZarma18}, \cite{DNZmems23}, \cite{DNZarxiv23}), the authors focused on the case of dimension $d=1$, and briefly described the stability properties of constructed blowup solutions through a spectral approach in a restricted (well-prepared) class of initial data.

In this paper, we aim to develop a new approach based on the dynamical rescaling formulation and simple vanishing conditions to study blowup solutions to \eqref{cgl}. This new approach allows us to establish asymptotically self-similar blowup and a clear notion of stability capturing the logarithm correction \eqref{eq:asyGLsub} 
in the subcritical case for a large class of initial data in all dimensions $d \geq 1$. Throughout this paper, we use the amplitude-phase representation,  
\begin{equation}\label{eq:intro_rep}
\psi(x,t) = u(x,t) e^{\imath \theta(x,t)},
\end{equation}
where $u$ and $\theta$ are real-valued functions of time and space solving the coupled system 
\bseq\label{eq:CGL_intro}
 \begin{align}
 &\pa_t u = \big[\Delta - |\nabla \theta|^2 \big]u - \beta\big(2 \nabla u \cdot \nabla \theta  + u \Delta \theta \big)  + u^p - \gamma u, \label{eq:A_intro}\\
& u \pa_t \theta = \beta \big[\Delta - |\nabla \theta|^2   \big] u + 2 \nabla u \cdot \nabla \theta  + u \Delta \theta + \delta u^p .\label{eq:theta_intro}
\end{align}
\eseq
The case $\beta = 0$ is related to a class of reaction-diffusion equations appearing in the study of pattern formation, see for example \cite{hagan1982spiral} and references therein.

\subsection{Main result}
For any $k \geq 1$, we introduce the functional spaces $\mathfrak E_k$ and  $\mfr F_k$ 
 \begin{equation}\label{def:EkFk}
   \mfr E_k = \Big\{ w: \; \|w\|_{\mfr E_k} = \sum_{j = 0}^k\|\nabla ^j w\|_{\rho_j} < +\infty \Big\}, \quad \mfr F_k =\Big\{ \phi, \; \| \phi \|_{\mfr F_k} = \sum_{j = 1}^k \|\nabla^j \phi \|_{\mathring{\rho}_j} < +\infty \Big\},
 \end{equation}
 where $\| \cdot\|_{\rho_k}$ and $\|\cdot\|_{\mathring{\rho}_k}$  stand for the standard weighted $L^2$-norm with $\rho_k$ and $\mathring{\rho}_k$ being defined as in \eqref{wg:rho}. 
Let $\bar U$ be the universal profile 
\begin{equation}\label{def:Ubar}
\bar U(z) = \Big(p-1 + \frac{(p-1)^2}{4\flat_*} |z|^2 \Big)^{-\frac{1}{p-1}}, \quad \forall \ z \in \Rb^d,
\end{equation}
and $V_0$ be a non-degenerate global maximizer of $u_0$ defined by 
 \beq\label{eq:ass_init}
 V_0 = \arg\max u_0(z) , \quad  u_0(V_0)>0, \quad -\na^2 u_0( V_0 ) \succ 0,
 \eeq
 where $A \succ 0$ means that $A$ is a positive definite matrix. The main result of this paper is the following theorem.

\begin{theorem}[Existence and stability of blowup solutions to \eqref{cgl}]\label{thm:blp} 
Consider $\beta, \delta$ in the sub-critical range \eqref{def:subCri}, i.e. $\flat_* > 0$, $p > 1$ and $d \geq 1$. Let $K = K(d,p) \in \mathbb{N}$ be defined as in \eqref{K}. There exists an open set $\Oc \subset \mfr E_K \times \mfr F_{K}$ of initial data $\psi_0 = u_0 e^{\theta_0}$ with the property \eqref{eq:ass_init} such that the corresponding solution $\psi = u e^\theta$ to \eqref{cgl} blows up in finite time $T$ and the following asymptotic behaviors hold. \\

\noindent (i) \textup{(The amplitude-phase decomposition)}
\begin{equation} \label{est:asymptoticEKFK}
    \Big\| H(t) u \big( \RR(t) z + V(t), t \big) -  \bar U(z) \Big\|_{\mfr E_K} + \Big\| \theta \big( \RR(t) z + V(t), t \big) -  \mu(t) - \delta \log \bar U(z)\Big\|_{\mfr F_K} \leq \frac{C}{1 + |\log (T-t)|},
\end{equation}
where $H(t)$ and $\mu(t)$ are scalar functions, $\RR(t)$ is an upper triangular matrix and $V(t)$ is a vector in $\Rb^d$, 
\begin{equation} \label{est:limitHRV}
    \lim_{t \to T} \frac{H(t)^{p-1}}{T-t} =1, \quad \lim_{t \to T} \frac{\RR(t)}{\sqrt{(T-t)|\log (T-t)|}} = \textup{I}_d, \quad \lim_{t \to T} V(t) = V_T,
\end{equation}
for some $V_T \in \Rb^d$, and $\mu(t)$\footnote{While the
$\mfr F_K$ norm in \eqref{est:asymptoticEKFK} only involves $\na^i \phi, i \geq 1 $ and $\mu(t)$ does not play a role in \eqref{est:asymptoticEKFK}, we keep $\mu(t)$ in \eqref{est:asymptoticEKFK} to indicate that it captures the phase of $\psi$. See \eqref{est:limitmu}, \eqref{eq:thm_estb}.}   {admits the expansion}
\begin{equation}\label{est:limitmu}
\mu(t) = -\frac{\delta}{p-1} \log (T-t)  - 
\frac{d\beta(1  +\delta^2)}{2\flat_*} \log |\log (T-t)| + \hat \mu(t), \quad \lim_{t\to T} \hat \mu(t) = \hat \mu_T,
\end{equation}
 for some scalar function $\hat \mu(t)$ and $\hat \mu_T \in \Rb$. \\

\noindent (ii) \textup{($L^\infty$ asymptotic behavior)} 
\begin{equation}\label{eq:thm_estb}
 \Big\| |\log(T-t)|^{\imath  \frac{d\beta (1 + \delta^2)}{2\flat_*}} (T-t)^{\frac{1 + \imath \delta}{p-1}}  e^{ - \imath \hat \mu(t) } \; \psi( \RR(t) z + V(t) , t )  - \bar U^{1 + i \d} \Big\|_{L^{\infty}}  \leq \frac{C}{1 + |\log( T- t)|^{\sigma'}},
\end{equation}
where 
$\sigma' = \min \big\{1, \frac{4}{p-1}\big\}$ and $C = C(u_0, \theta_0) > 0$.
\end{theorem}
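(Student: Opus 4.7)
The strategy, building on \cite{HNWarXiv24}, is to recast the amplitude-phase system \eqref{eq:A_intro}--\eqref{eq:theta_intro} in a generalized dynamic rescaling formulation carrying a complete set of modulation parameters, so that every unstable and neutral direction of the linearized operator around the profile $(\bar U,\,\delta\log\bar U)$ is killed \emph{by fiat} through normalization conditions; closure then reduces to coupled weighted energy estimates and ODEs for the modulation parameters, without any spectral decomposition or topological argument. Concretely, I would introduce rescaled variables $z=\RR(t)^{-1}(x-V(t))$, $U(z,\tau)=H(t)u(x,t)$, $\Theta(z,\tau)=\theta(x,t)-\mu(t)$, and rescaled time $\tau\simeq -\log(T-t)$, and set the perturbations $W=U-\bar U$, $\Phi=\Theta-\delta\log\bar U$. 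Substituting into \eqref{eq:A_intro}--\eqref{eq:theta_intro} yields a coupled system schematically of the form $\pa_\tau(W,\Phi)=\Ls(W,\Phi)+\Ns(W,\Phi)+\Ms[\dot H,\dot \RR,\dot V,\dot\mu]$, where $\Ls$ is the linearization about the profile, $\Ns$ gathers quadratic-and-higher corrections, and $\Ms$ collects the driving terms produced by the time derivatives of the modulation parameters.

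I would then impose pointwise normalization conditions at $z=0$ on $W,\na W,\na\Phi,\na^2W,\ldots$, one per modulation parameter, using in particular the \emph{extra} modulation variables (perturbing the scaling symmetry) to absorb the single neutral direction responsible for the logarithmic correction. Differentiating these conditions in $\tau$ turns $\Ms$ into a closed algebraic-ODE system for $(\dot H,\dot\RR,\dot V,\dot\mu)$ whose leading-order expansion reproduces the asymptotics \eqref{est:limitHRV}: solving $\dot H/H=(p-1)^{-1}+O(\tau^{-1})$ gives $H^{p-1}\sim(T-t)$; the matrix ODE for $\RR$ yields the $\sqrt{(T-t)|\log(T-t)|}$ scaling; and the ODE for $\mu$, with a forcing proportional to $d\beta(1+\delta^2)/(2\flat_*)\cdot \tau^{-1}$ produced by linearizing $\beta(\Delta-|\na\theta|^2)u$ against $\bar U$, integrates to the doubly-logarithmic coefficient in \eqref{est:limitmu}. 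After this step, the bad modes of $\Ls$ have been projected out, so one can attempt a pure energy argument.

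The next step is weighted $L^2$ energy estimates up to order $K=K(d,p)$ in $\mfr E_K\times\mfr F_K$, with weights $\rho_j,\mathring\rho_j$ tuned so that, modulo the normalized directions, the symmetrized linearization satisfies a coercivity inequality of the form
\[
\tfrac{d}{d\tau}\bigl(\|W\|_{\mfr E_K}^2+\|\Phi\|_{\mfr F_K}^2\bigr)\le -c\,\bigl(\|W\|_{\mfr E_K}^2+\|\Phi\|_{\mfr F_K}^2\bigr)+\tfrac{C}{\tau^2},
\]
the source coming from the modulation estimates and from the nonlinear interaction, which is cubic-small after a bootstrap. Gr\"onwall then produces the $\tau^{-2}$ decay that is exactly the $|\log(T-t)|^{-1}$ rate of \eqref{est:asymptoticEKFK}. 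For the $L^\infty$ estimate \eqref{eq:thm_estb}, I would feed the $\mfr E_K$ bound into a Sobolev embedding (this fixes the threshold $K(d,p)$), combine it with the $\mfr F_K$ control on $\na\Phi$, and expand $u^{1+i\delta}e^{i(\theta-\mu)}$ about $\bar U^{1+i\delta}$; the remainder splits into a part of size $\|W\|_\infty$ and a part of size $\|\Phi\|_\infty$, whose respective decay rates combine to produce the exponent $\sigma'=\min\{1,4/(p-1)\}$, the $4/(p-1)$ branch arising from the slower decay at large $|z|$ when $p$ is close to $1$.

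\textbf{Main obstacle.} The principal difficulty is proving coercivity of the symmetrized $\Ls$ in the weighted $\mfr E_K\times\mfr F_K$ norm after the normalizations remove the finitely many bad directions: because of the $\beta$- and $\delta$-coupling between amplitude and phase, $\Ls$ is genuinely non-self-adjoint, and the commutators produced by differentiating $j$ times to control $\na^j W$ and $\na^j\Phi$ must be absorbed using a carefully matched pair of weights $(\rho_j,\mathring\rho_j)$. Simultaneously, the modulation ODEs must be solved sharply to second order in $\tau^{-1}$ to capture the doubly-logarithmic correction in \eqref{est:limitmu} — balancing these two asymptotic orders inside a single bootstrap, while keeping the energy damping strictly negative, is the delicate step.
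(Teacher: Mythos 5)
Your overall strategy is the same as the paper's (generalized dynamic rescaling with extra modulation parameters, vanishing conditions at the origin in place of spectral projections, weighted energy estimates), but two steps that carry the real content of the theorem are asserted rather than supplied, and as written they would not go through. First, the logarithmic correction and the limit $\RR(t)/\sqrt{(T-t)|\log(T-t)|}\to \textup{I}_d$ in \eqref{est:limitHRV} do not come out of a generic ``matrix ODE for $\RR$'': the point of the construction is that the scaling-perturbation matrix $\Qc=H^{p-1}e^{\tau}\Mc\Mc^T$ of \eqref{eq:Pv} has \emph{no linear damping} and obeys a Riccati-type law $\frac{d}{d\tau}\tr(\Qc)=-\tr(\Qc^2)+l.o.t.$ as in \eqref{eq:intro_ODE2}, whose algebraic decay $\tr(\Qc)\sim d/\tau$ is precisely the $|\log(T-t)|$ correction; moreover, since the initial Hessian is allowed to be anisotropic, one must show that $\Qc$ isotropizes, $\Qc=\tau^{-1}\textup{I}_d+O(\tau^{-3/2+})$, which the paper obtains by running the ODEs for both $\tr(\Qc)$ and $\tr(\Qc^{-1})$ and using $\tr(\Qc)\tr(\Qc^{-1})\le d^2+O(\log\tau/\tau)$ to pinch the eigenvalue ratios. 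Without this refinement you only get boundedness or an anisotropic limit, and the identity-matrix limit in \eqref{est:limitHRV}, as well as the precise constant $\frac{d\beta(1+\delta^2)}{2\flat_*}$ in \eqref{est:limitmu} (which arises from a cancellation between the $\tr(\Qc)$-forcing in the $\Phi(0)$ equation and the $O(\tau^{-1})$ correction in $H^{p-1}/(T-t)$, not from a single forcing term), cannot be concluded. Your energy inequality with source $C/\tau^2$ presupposes this $\tau^{-1}$ law for $\Qc$, so it cannot be the place where the law is produced.

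Second, you flag the top-order coercivity in the presence of the $\beta$-coupling as the main obstacle but leave it unresolved; this is exactly where a naive symmetrization fails, because $\na^K\Ds_U$ contains $-\beta U\Delta_{\Qc}\na^K\Theta$ and $\na^K\Ds_\Theta$ contains $\beta\Delta_{\Qc}\na^K U/U$, which cannot be absorbed by the diffusion separately. The paper's device is to take the top phase energy with the solution-dependent weight $\mathring\rho_K=U^2\rho_K$ in \eqref{wg:rho} and pair the two cross terms as in \eqref{topcouple}, so that after integration by parts they cancel up to lower order; some such algebraic structure must be specified for your scheme to close. Related ingredients you omit but need for general $p>1$ are the pointwise lower bound $U\gtrsim \bar U^{1+\epsilon_2}$ (proved in the paper by a maximum-principle/barrier argument) to control $1/U$ and the non-integer power $U^p$, and the near-optimal far-field weights plus interpolation to handle $\na^K(U^p)$. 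Two smaller slips: the modulation law should read $H_\tau/H=-\frac{1}{p-1}+O(\tau^{-1})$ (your sign is off), and the branch $\sigma'=\frac{4}{p-1}$ in \eqref{eq:thm_estb} is active for large $p$ (slow spatial decay of $\bar U$), not for $p$ close to $1$. Finally, note the paper does not modulate the phase dynamically: since the equations involve only $\na\Theta$, the constant mode $\Phi(0,\tau)$ is simply tracked and absorbed into $\mu(t)$ a posteriori; your extra normalization on $\Phi$ is a workable alternative bookkeeping, but it is not needed and does not by itself yield \eqref{est:limitmu} without the refined $\Qc$ and $H^{p-1}/(T-t)$ asymptotics above.
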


\medskip

 \begin{remark}
 [Description of the set $\Oc$ of initial data]\label{rem:blowup} For initial data $u_0$ satisfies the property \eqref{eq:ass_init}, we can define an upper triangular matrix $\Mc_0$ with $\Mc_{0,ii} > 0$\footnote{Simple linear algebra shows that $\Mc$ is uniquely determined.} and the rescaled variables $(U_0, \Th_0)$ 
\beq\label{eq:init_res}
\bal
& H_0 = \f{ \kp_0}{ u_0(V_0 )}, \quad 
\Mc_0^T \Mc_0  =  - \f{  \kp_0  \na^2 u_0(V_0)}{ \kp_2 u_0(V_0)  }
= H_0 \f{   \na^2 u_0(V_0)}{ \kp_2   },\quad \kp_0 = \bar U(0), \quad \kp_2 = \pa_1^2 \bar U(0), \\
& U_0(z) = H_0 u_0( \Mc_0^{-1} z + V_0),  \quad  \Th_0(z) =  \th_0( \Mc_0^{-1} z + V_0),
\eal
\eeq
where $\bar U$ is defined in \eqref{def:Ubar}. Since $\na u(V_0, 0) =0$, by definition, \eqref{eq:init_res} implies the following normalization 
 \beq\label{eq:init_res2}
    U_0(0)= \kp_0 = \bar{U}(0)\,,\quad \nabla U_0(0)=0\,,\quad \nabla^2 U_0(0)=\nabla^2 \bar{U}(0)=\kappa_2 I_d\,. 
  \eeq
Let $\nu > 0$ be small,  $\epsilon_2$ and $C_b$ be defined in \eqref{eps} and \eqref{eq:boot_U},  the set of initial data in Theorem \ref{thm:blp} consists of initial data $(u_0, \th_0)$ satisfying \eqref{eq:ass_init} and its rescaled variable $(U_0, \Th_0)$ satisfies 
\beq\label{eq:ass_init2}
U_0 \bar U^{-1- \e_2} > 2C_b , \quad H_0^{p-1} < \nu, \quad  u_0(V_0)^{-p} \tr( \na^2 u_0(V_0) ) < \nu ,
\eeq
and
\begin{equation}\label{def:EKFKbar}
\|W_0\|_{\mfr E_K}= \|U_0 - \bar U\|_{\mfr E_K} < \nu, \quad \|\Phi_0\|_{\bar {\mfr F}_K} := \| \Th_0 - \delta \log \bar U \|_{\mfr F_{K-1}} + \|\langle z \rangle^{K - \frac{d}{2}} \nabla^K (\Th_0 - \delta \log \bar U) \|_{L^2} < \nu.
 \end{equation}
The last quantity in \eqref{eq:ass_init2} is invariant under the parabolic rescaling: $u_{0, l}(z) 
= l^{1/(p-1)} u_0( l^{1/2} z)$. We will use its smallness to show that the viscous terms are small compared to the nonlinear terms. The lower bound $U_0 \bar U^{-1-\e} > 2 C_b$ in \eqref{eq:ass_init2} ensures that $U_0(z) \neq 0$ for any $z$, without which can lead to low regularity of rescaled velocity $|U|^p$ of $u^p$ \eqref{eq:A_intro}.
\end{remark}

\begin{remark}[Positive definiteness of the Hessian of the initial data]\label{rem:shape}
While the limiting blowup profile $\bar U$ \eqref{def:Ubar} is isotropic near $z=0$, we do not need to assume that the initial data $u_0$ is isotropic near $V_0$, i.e., $\na^2 u_0(V_0)$ is close to $c I_d$ for some $c \neq 0$. By introducing the upper triangular matrix $\RR(t)$ in the rescaling (see \eqref{eq:dyn_scal0}), 
we can handle a much larger class of initial data with non-degenerate global maximizer \eqref{eq:ass_init}. 
\end{remark}

  \begin{remark}\label{rem:1D}
  The asymptotics of the blowup solution \eqref{eq:thm_estb}  recovers the constructed result of Masmoudi-Zaag \cite{MZjfa08} for the case $d = 1$.  
  The set of initial data leading to the blowup solution described in Theorem \ref{thm:blp} is larger than the one in \cite{MZjfa08} which is only a subset of $L^\infty(\Rb)$. We note that there is a free phase-shift $\hat \mu$ in \eqref{eq:thm_estb} corresponding to the phase invariant of \eqref{cgl} that was fixed to be $\hat \mu(t) = 0$ in \cite{MZjfa08} by a specific choice of initial data through a topological argument. We remark that the relaxing asymptotics \eqref{est:limitHRV} and \eqref{est:limitmu} are natural for rigorous stability analysis in all dimension $d \geq 1$ treated in this present paper.
\end{remark}

{Note that the asymptotic behavior \eqref{est:asymptoticEKFK}, \eqref{eq:thm_estb} involves the 4 parameter functions $H, \RR, V$ and $\mu$ (or $\hat \mu$) which are responsible for all the symmetries of \eqref{cgl}.\footnote{
Although $H(t)$ is absent in \eqref{eq:thm_estb}, we can replace the factor $(T-t)^{1/(p-1)}$ by $H(t)$ using \eqref{est:limitHRV}.} Theorem \ref{thm:blp} is stated in terms of the rescaled profiles, with a singular weight at the origin. 
We note that the condition \eqref{eq:ass_init} and parameters $\Mc_0, H_0, V_0$ in \eqref{eq:init_res} are $C^2$-stable if the global maximizer is unique.
We can therefore simplify the assumptions in Theorem \ref{thm:blp} to obtain the following stability results with a more explicit description of the open set of initial data. 
 }

\begin{theorem}
    [Stability of blowup solutions to \eqref{cgl}]\label{cor:stab}

Let $K = K(d,p) \in \mathbb{N}$ be defined as in \eqref{K}, and $\cH^K, \mfr F_{K-1}$ be the norms defined in \eqref{norm:Hk}, \eqref{def:EkFk}. Suppose that $(u_0, \th_0)$ satisfies the assumptions \eqref{eq:ass_init}, \eqref{eq:ass_init2},
\eqref{def:EKFKbar} and $V_0$ is the unique global maximizer: $u_0(V_0) > u_0(z)$ for all $z \neq V_0$. 
There exists {$\epsilon_0 = \e_0(u_0) > 0$} such that if 
\beq\label{eq:ass_cor}
\| \td u_0 - u_0 \|_{\cH^K} + \|(\td u_0 - u_0 ) \bar U^{-1-\e_2} \|_{L^{\infty}} +  \| \td \th_0 - \th_0\|_{\mfr F_{K-1}} + \|\langle z \rangle^{K - \frac d2} \nabla^K (\td \th_0 - \th_0) \|_{L^2}  < \epsilon_0,
\eeq
the solution $\tilde \psi = \td ue^{\td \th}$ to \eqref{cgl} with the initial data $ \tilde \psi_0 = \td u_0e^{\td \th_0}$ blows up in finite time $\td T$. Moreover, there exists $H(t), \RR(t), V(t), \mu(t)$ satisfying \eqref{est:limitHRV} and \eqref{est:limitmu} such that \eqref{est:asymptoticEKFK} and \eqref{eq:thm_estb} holds for $(\td u(t), \td \th(t))$ with $T$ being replaced by $\tilde T$. 
\end{theorem}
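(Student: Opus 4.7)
The plan is to derive Theorem \ref{cor:stab} directly from Theorem \ref{thm:blp} by showing that the perturbed initial data $\tilde\psi_0 = \tilde u_0 e^{\imath \tilde \theta_0}$ still belongs to the open set $\mathcal{O}$ of admissible initial data. So the whole argument reduces to a stability-of-hypotheses check, via a continuity analysis of the map sending $(u_0,\theta_0)$ to the rescaling parameters $(V_0,H_0,\mathcal{M}_0)$ and then to the rescaled pair $(U_0,\Theta_0)$.

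First, I would localize a new global maximizer $\tilde V_0$ of $\tilde u_0$. Since $V_0$ is the \emph{unique} global maximizer of $u_0$ and $-\nabla^2 u_0(V_0)\succ 0$, choosing $K$ so that $\mathcal{H}^K \hookrightarrow C^2$ gives $\|\tilde u_0-u_0\|_{C^2}\lesssim \epsilon_0$. The implicit function theorem applied to $\nabla\tilde u_0=0$ at $V_0$ produces a unique nearby critical point $\tilde V_0$ with $|\tilde V_0-V_0|\lesssim \epsilon_0$ and $-\nabla^2\tilde u_0(\tilde V_0)\succ 0$. Its globality as maximizer is forced by: (i) the strict maximum of $u_0$ at $V_0$ combined with $C^0$-closeness on any bounded region, and (ii) the weighted $L^\infty$ hypothesis, which together with $U_0\bar U^{-1-\epsilon_2}>2C_b$ makes $\tilde u_0$ decay at infinity as $\bar U$ does, ruling out competing maxima. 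Via \eqref{eq:init_res}, the parameters $\tilde H_0,\tilde{\mathcal{M}}_0$ then depend continuously on the $C^2$-data at $\tilde V_0$, so $|\tilde H_0-H_0|+|\tilde{\mathcal{M}}_0-\mathcal{M}_0|\lesssim \epsilon_0$, and the pointwise smallness conditions in \eqref{eq:ass_init2} propagate by continuity after possibly shrinking $\epsilon_0$.

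The core step is verifying the weighted Sobolev bounds \eqref{def:EKFKbar} for $\tilde U_0, \tilde \Theta_0$. I would write $\tilde U_0-\bar U=(\tilde U_0-U_0)+(U_0-\bar U)$, where the second term has $\mathfrak{E}_K$-norm bounded by $\nu$ by hypothesis. The cross term splits further into
\[
\tilde U_0(z)-U_0(z) = \bigl[\tilde H_0\tilde u_0(\mathcal{M}_0^{-1}z+V_0)-H_0 u_0(\mathcal{M}_0^{-1}z+V_0)\bigr] + \bigl[\tilde H_0\tilde u_0(\tilde{\mathcal{M}}_0^{-1}z+\tilde V_0)-\tilde H_0 \tilde u_0(\mathcal{M}_0^{-1}z+V_0)\bigr],
\]
a genuine-perturbation piece (fixed frame) plus a frame-change piece (same $\tilde u_0$). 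The first is controlled by pulling the $\mathcal{H}^K$ bound on $\tilde u_0-u_0$ through the fixed linear change of variables and then installing the polynomial weights in $\mathfrak{E}_K$ via a Gagliardo–Nirenberg interpolation between the weighted $L^\infty$ hypothesis on $(\tilde u_0-u_0)\bar U^{-1-\epsilon_2}$ and the $\mathcal{H}^K$ control of top derivatives. The second piece is a smooth function of $(\tilde V_0,\tilde{\mathcal{M}}_0)$ applied to a fixed $\tilde u_0\in \mathfrak{E}_K$, so the mean value theorem in the $\mathfrak{E}_K$-norm gives $\lesssim \epsilon_0\,\|\tilde u_0\|_{\mathfrak{E}_{K+1}}$. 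An identical splitting handles $\tilde\Theta_0-\delta\log\bar U$ in $\bar{\mathfrak{F}}_K$ using the two phase-related terms in \eqref{eq:ass_cor}, while the lower bound $\tilde U_0\bar U^{-1-\epsilon_2}>2C_b$ transfers from $U_0$ using the weighted $L^\infty$ bound and continuity of the frame. Combining these gives $\|\tilde U_0-\bar U\|_{\mathfrak{E}_K}+\|\tilde\Theta_0-\delta\log\bar U\|_{\bar{\mathfrak{F}}_K}<\nu$, so $\tilde\psi_0\in\mathcal{O}$ and Theorem \ref{thm:blp} applies with blowup time $\tilde T$, yielding \eqref{est:asymptoticEKFK}–\eqref{eq:thm_estb} for $(\tilde u,\tilde\theta)$.

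The main obstacle is precisely the mismatch between the assumption norms in \eqref{eq:ass_cor} — the \emph{unweighted} Sobolev norm $\mathcal{H}^K$ and a single weighted $L^\infty$ norm — and the conclusion norms $\mathfrak{E}_K, \bar{\mathfrak{F}}_K$, which carry polynomial weights on \emph{every} derivative up to order $K$. Bridging the two requires a careful Gagliardo–Nirenberg interpolation: the weighted $L^\infty$ controls the decay of the function, and interpolation with $\mathcal{H}^K$ control on the top derivatives yields weighted $L^2$ bounds on the intermediate derivatives. A secondary delicate point is the globality of $\tilde V_0$ as a maximizer, where the polynomial decay enforced by the weighted $L^\infty$ hypothesis is essential — without it, $\tilde u_0$ could in principle acquire a competing global maximum far from $V_0$.
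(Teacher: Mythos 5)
Your overall strategy coincides with the paper's: reduce everything to showing that $\td \psi_0$ lies in the set $\Oc$ of Theorem \ref{thm:blp}, locate the perturbed maximizer $\td V_0$ via uniqueness, non-degeneracy and the $C^2$-embedding of $\cH^K$, note continuity of $(\td V_0,\td H_0,\td \Mc_0)$, and split the rescaled difference into a data-perturbation piece and a frame-change piece (the paper's $J_1$ and $J_2=J_{21}+J_{22}$ in Section \ref{sec:cor_pf}). However, there is a genuine gap at the step you yourself identify as the core difficulty. You locate the norm mismatch between \eqref{eq:ass_cor} and $\mfr E_K$ in the decay at infinity and propose to bridge it by Gagliardo--Nirenberg interpolation between the weighted $L^\infty$ hypothesis and the $\cH^K$ control. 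But at infinity the weights $g_k$ of $\cH^K$ and $\rho_k$ of $\mfr E_K$ are comparable, so no interpolation is needed there; the true obstruction is that $\rho_j$ is \emph{singular at the origin} for low $j$ (like $|z|^{-6+\e-d+2j}$), so finiteness of $\|\nabla^j w\|_{\rho_j}$ forces $w$ to vanish to (almost) third order at $z=0$. Neither the weighted $L^\infty$ hypothesis nor $\cH^K$ gives any vanishing at the origin of the individual pieces in your splitting, so the proposed interpolation cannot produce the low-order weighted bounds. The paper resolves this by exploiting that the normalization \eqref{eq:init_res2} holds for \emph{both} $U_0$ and $\td U_0$ by construction of the modulation parameters, grouping the pieces so that the relevant combinations satisfy $\nabla^l(\cdot)(0)=0$ for $l\le 2$, and then using the key inequality \eqref{eq:cor_pf3}, $\|\nabla^i f\|_{\rho_i}\les \|f\|_{\cH^K}$ for such $f$ (local $C^3$ control of the Taylor remainder plus the matching of weights at infinity). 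Your proposal never invokes the vanishing conditions in this step, so as written it does not control $\|\nabla^j(\td U_0-U_0)\|_{\rho_j}$ for small $j$.

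Two further points. Your treatment of the frame-change piece by a mean value theorem ``in the $\mfr E_K$-norm'' with the bound $\les \e_0\,\|\td u_0\|_{\mfr E_{K+1}}$ is not available: the hypotheses control only $K$ derivatives, and membership of the unrescaled $\td u_0$ in a space with weights singular at $z=0$ is not meaningful without vanishing there. The paper avoids losing a derivative by a soft continuity argument: the piece $J_{21}=H_1(U_0-\bar U)(\Mc_1 z+V_1)$ is not small at all, only shown to remain strictly below the threshold $\nu$ as $(H_1,\Mc_1,V_1)\to(1,\mathrm{I}_d,0)$, while the genuinely small pieces $J_1+J_{22}$ are sent to zero via \eqref{eq:cor_pf3}. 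Finally, your claim that the weighted $L^\infty$ hypothesis is essential to rule out competing maxima far from $V_0$ is an overstatement: the embedding \eqref{eq:inte:3} already gives a decaying global $L^\infty$ bound on $\td u_0-u_0$ from $\cH^K$ alone, which is how the paper localizes $\td V_0$; the real role of the weighted $L^\infty$ assumption is to transfer the lower bound $U_0\bar U^{-1-\e_2}>2C_b$ to $\td U_0$, which you do note.
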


{From the proof of Theorem \ref{cor:stab}, it can be shown that $\e_0$ depends on $u_0$ through 
its certain norms. We do not state the dependence explicitly for simplicity.
}
\begin{remark} 
The assumptions in Theorems \ref{thm:blp},\ref{cor:stab} are satisfied, e.g. for $ u_0 = C \bar U$, $\th_0 = \bar \Th_0$ with $C$ sufficiently large. 
The weighted norms $\|\cdot \|_{\cH^K}$ \eqref{norm:Hk} and $\|\cdot \|_{\Fc_K}$ \eqref{def:EkFk} are well-defined for sufficiently smooth functions with fast decay. We do not require that $u_0 - \td u_0$ agrees up to $\Oc(|z - V_0|^k), k >0$ near the maximizer $V_0$ of $u_0$. 
\end{remark}

\subsection{
Dynamic rescaling formulation with extra modulation parameters} 
\label{sec:intro_dyn}

{The  dynamic rescaling formulation or the modulation technique was developed to study singularity formulation in the nonlinear Schr\"odinger equation  \cite{mclaughlin1986focusing}, \cite{LPSSphysA88} numerically and various nonlinear PDEs; see the comprehensive references on \eqref{nlh},  \eqref{nls}, and related models in \cite{QSbook07}, \cite{Fbook15}.}
Recently, researchers also generalized this technique for fluid mechanics 
\cite{chen2019finite2,chen2022stable,chen2023stable}, \cite{elgindi2021finite}.  We can establish singularity in two steps. Firstly, one constructs an approximate steady state of the dynamic rescaling equation (analytically or numerically). Secondly, 
  one performs linear and nonlinear stability analysis for perturbation around the approximate steady state with appropriate normalization conditions. The law of blowup will then be prescribed by the normalizing constants.
  One can establish stability using a $L^2$-based \cite{chen2021finite,chen2021HL} or $L^\infty$-based \cite{chen2022stable} argument. In these arguments, {one of the crucial steps} is to design appropriate singular weights depending on the profile, and then use the weights to derive damping terms for the energy estimates. 
  The approach does not require an explicit profile and is robust to small perturbation, which makes it possible to combine weighted energy estimates for stability analysis,
  a numerical \textit{implicit} profile, and computer-assisted proofs to construct blowup solutions. {See for example,} \cite{chen2022stable,chen2023stable}  for applications in 3D incompressible Euler equations with smooth data and \cite{chen2021finite, chen2021HL}, \cite{chen2020singularity}, \cite{hou2024blowup} for related 1D models.


In \cite{HNWarXiv24}, the authors generalized the $L^2$-based methodology to establish a type-I
\footnote{A blowup solution to \eqref{cgl} is of Type I if it satisfies the bound $\lim_{t\to T}(T-t)^{-\frac{1}{p-1}}\|u(t)\|_\infty < \infty$, otherwise, blowup is of Type II.} 
 blowup for the semilinear heat equation beyond the self-similar setting where there is a logarithm correction in the self-similar scaling
for the spatial variable. {Compared with the mentioned works \cite{BKnon94}, \cite{HVaihn93}, \cite{MZdm97}, \cite{MZjfa08} where the authors heavily relied on a spectral analysis with detailed properties of the associated linearized operator to establish the existence and stability, we simply suppresses unstable directions {and neutral modes} via a clear characterization of weighted Sobolev spaces, without using Brouwer's fixed-point theorem or a topological argument. The correct Type I blowup rate is automatically inferred by enforcing proper vanishing conditions of the perturbation. We remark that for Type I blowup, there is indeed a link between the vanishing conditions at the origin and the vanishing coefficients projected onto the unstable {and neutral} spectral eigenfunctions; see for example \cite{HNWarXiv24} and \cite{MZdm97} in the case of the nonlinear heat equation, where the eigenfunctions correspond to Hermite polynomials. A further explanation of this connection is discussed in Step 2 in Section \ref{sec:idea}.} 

\vspace{0.1in}
\noindent \textbf{A generalized dynamical rescaling formulation with extra modulation parameters\footnote{The modulation parameters are also known as normalization constants in the dynamic rescaling formulation.}.}
In this article, we further generalize the above framework and the ideas in \cite{HNWarXiv24}. {In particular, it consists of three main steps:}

\vspace{0.1in}


\noindent \textit{Step 1 ({Renormalization with extra modulation parameters}):} {The renormalization is an essential step in the study of nonlinear PDEs with symmetries including incompressible/compressible fluids equations. For the equation \eqref{cgl}, there are trivial symmetries (see Section \ref{sec:res}) from which we introduce the following renormalization in terms of the amplitude-phase representation  $\psi(x,t) = u(x,t)e^{\imath \theta(x,t)}$,} 
\begin{equation}\label{eq:intro_dyn}
U(z,\tau) = H(\tau) u\big( \RR(\tau) z + V(\tau), t(\tau)\big), \quad \Theta(z, \tau) = 
 \theta\big( \RR(\tau) z + V(\tau), t(\tau)\big), 
 \quad 
 t(\tau) = \int_0^\tau H^{p-1}(s) ds,
\end{equation}
where $ \RR(\tau) \in \Rb^{d \times d}$ is a upper triangular matrix, $V(\tau) \in \Rb^d$ and $H(\tau) \in \Rb_+$. Here, $H$ is responsible for time, $V$ for spatial translation. 

The key novelty is that in addition to modulation parameters corresponding to the symmetries, we introduce \textit{extra modulation parameters}. Instead of applying the same rescaling to $z_i$, we rescale $z_i$ with \textit{different but similar} scalings following the ideas  \cite{HNWarXiv24}. 
See also a recent work \cite{hou2024nearly} on the generalized Navier-Stokes equations, in which the author developed a generalized dynamic rescaling formulation by using different rescalings for the $r$ and $z$ directions respectively and a self-similar blowup was observed numerically. 
In the case of \eqref{cgl}, we rescale $z_i$ with scaling slightly perturbed from the parabolic scaling. We remark that the choices of different scalings for $z_i$ violate the scaling symmetries.
Yet, in the case of \eqref{cgl}, we will show that the violation is asymptotically small and $\RR(\tau)$ converges to $c(\tau) I_d$ asymptotically for some scalar function $c(\tau)$. Therefore, the above renormalization \eqref{eq:intro_dyn} asymptotically agrees with the classical dynamical rescaling formulation \cite{chen2022stable,chen2023stable}, \cite{chen2021finite, chen2021HL}. 
These extra parameters provide us extra $d-1$ degrees of freedom, and we have crucial $1 + d + \f{d(d+1)}{2}$ degrees of freedom in total in choosing the dynamic variables $H(\tau), V(\tau), \RR(\tau)$. 
{ 
  A natural idea to represent the scaling in $z_i$ and capture the rotation symmetries is to choose $\RR(\tau) = D(\tau) Q(\tau)$ with a diagonal matrix $D$ and an orthogonal matrix $Q$. Yet, it is challenging to parametrize a time-dependent orthogonal matrix in $\Rb^{d \times d }$. 
  Instead, we use the upper triangular matrix $\RR(\tau)$ with $\f{d(d+1)}{2}$ parameters. 
}

To determine these modulation parameters, we impose normalization conditions on $\na^i U(0), i=0,1,2$, so that the perturbation of $U$ vanishes $O(|z|^3)$ near $z=0$ and we can perform weighted energy estimates mentioned above.
Note that the number of (different) equations and that of the degrees of freedom are \textit{exactly} the same.
For \eqref{cgl}, these conditions allow us to completely eliminate the unstable and neutrally stable modes of the linearized operator. See Step 2 in Section \ref{sec:idea} for more details.

\vspace{0.1in}

\noindent \textit{Step 2 (Equations of the profiles and modulation parameters)}: We derive the equations of $F = (U, \Th)$ and matrix (or vectors) 
$\Qc(\tau)$ governing the {modulation parameters} $\RR(\tau), V(\tau), H(\tau)$,
\beq\label{eq:intro_full}
 \pa_{\tau} F = \msN_F(F, \Qc), \quad  \f{d}{d \tau} \Qc = \msN_{\Qc}(F, \Qc) ,
\eeq
where $\msN_F$ is a nonlinear function and $\msN_{\Qc}$ is a matrix. Then the rescaling system is completely determined, and we further construct the approximate steady state $(\bar F, \bar \Qc)$ analytically or numerically.

\vspace{0.1in}

\noindent \textit{Step 3 (Stability analysis and the log correction):} In general, we \textit{do not} know \textit{a-priori} that the approximate steady state $(\bar F, \bar \Qc)$ is stable in some suitable topology. Nevertheless, if we can establish {stability of} $(\bar F, \bar \Qc)$ following the strategy mentioned above and $H(\tau)^{p-1}$ is integrable, then we can obtain finite time blowup using \eqref{eq:intro_dyn} and the law of blowup will then be prescribed by the normalizing constants. 

For \eqref{cgl}, we will use energy method with an energy $E$ for the perturbation $F - \bar F$ 
to establish 
\[
\f{d}{d \tau} E \leq - c_1 \cdot E + C \tr(\Qc) + l.o.t., \quad 
\quad  \f{d}{d \tau} \tr(\Qc) \leq  - c_2 \cdot \tr(\Qc)^2 + l.o.t.,
\]
for some $c_1, c_2, C >0$, where $\Qc$ further satisfies that it is a positive definite matrix and $l.o.t.$ denotes some terms that are very small.
The second ODE of $\tr(\Qc)$ further implies that $|\tr(\Qc)| \les (1 + \tau)^{-1}$. A further refinement of this algebraic decay in the self-similar time implies a log correction $\log(T-t)$ in the blowup rate. We will elaborate more in Step 2 in Section \ref{sec:idea}. 

\

One can thus hope to combine the above method for a log correction and the framework \cite{chen2022stable,chen2023stable} to problems with numerical steady states, while  spectral analysis heavily hinges on a simple and analytical approximate steady state with explicit (nonlinear heat \cite{MZdm97}) or at least asymptotical spectral information of the linearized operator (Keller-Segel \cite{CGMNcpam21}). 
For example, constructing a smooth (approximate) steady state analytically for 3D incompressible Euler or Navier-Stokes equations (NSE) is challenging and remains an open problem.
On the other hand, constructing a numerical approximate steady state with computer-assistance is much more feasible. See \cite{chen2022stable,chen2023stable}, \cite{wang2022self} for the construction in 3D Euler equations. 
For NSE, self-similar blowup with a perfect self-similar scaling has been ruled out 
\cite{tsai1998leray}, \cite{sverak1996leray}. Yet, one can construct a blowup violating these non-blowup results by adding a log correction in the spatial variable. 
See numerical evidence on the singular behavior of NSE with a potential logarithm correction in the potential blowup by the second author \cite{hou2023potentially}.

\subsection{Ideas of the blowup analysis}\label{sec:idea}

{We first discuss some of difficulties in the study of singularity formation in \eqref{cgl}. Then we follow the generalized dynamic rescaling framework to establish the existence and stability of asymptotically self-similar blowup  solutions to \eqref{cgl} by briefly discussing the strategy and main ideas of our analysis.}

\paragraph{\textbf{Difficulties:}} Compared with \eqref{nlh} or \eqref{nls}, the analysis for the complex Ginzburg-Landau equation \eqref{cgl} has the following additional challenges.

\vspace{0.1in}
  \textbf{1.} The complex Ginzburg-Landau equation \eqref{cgl} is not of a gradient form, rendering energy estimates hard. To overcome this challenge, we use the amplitude-phase representation \eqref{eq:intro_rep}, \eqref{eq:CGL_intro} to analyze \eqref{cgl}.

    \vspace{0.1in}
  \textbf{2.}  
 We remove the even symmetry assumption of the perturbation required in \cite{HNWarXiv24} to recover full stability. {
 Without the even symmetry assumption, we have more potentially unstable modes for the linearized operator. We control these unstable modes using the generalized dynamic rescaling formulation in Step 1 in Section \ref{sec:intro_dyn}.
 }

 \vspace{0.1in}
  \textbf{3.}
  We consider the whole range of the nonlinearity $p > 1$. For the analysis of the phase equation \eqref{eq:theta_intro} and the nonlinearity $u^p$ \eqref{eq:A_intro}, \eqref{eq:theta_intro}, we need to bound the rescaled amplitude $U$ from below, which we establish using the maximal principle and a weighted $L^{\infty}$ estimate. Due to the non-integer power $p$ to control $\na^K(U^p)$ in the $H^K$ estimate, which leads to terms like $ U^{p-K} (\na U)^K$, we need to obtain sharp decay estimates for $\na^i U$. This is done by 
  choosing an almost tight power in the far field of the weight for the weighted $H^k$ energy estimates and using interpolation and embedding inequalities following \cite{chen2024Euler}.  
An additional difficulty comes from the coupling between $u, \th$ in the viscous terms in \eqref{eq:A_intro}, \eqref{eq:theta_intro}. 
We design the top order energy with a special algebraic structure to cancel out the top order terms and show that the viscous terms have a good sign. See Step 3(b) in Section \ref{sec:idea}.

\paragraph{Ideas and strategy:} We briefly discuss the strategy and main ideas of our analysis.

\vspace{0.1in}
\noindent \textit{Step 1 (Dynamical rescaling formulation)}:
We follow Step 1 in Section \ref{sec:intro_dyn} to perform the rescaling \eqref{eq:intro_dyn}.
Then, we introduce the following factors governing the evolution of these parameters 
\beq\label{eq:intro_Pv}
 \frac{H_\tau}{H}=c_U, 
 \quad c_U = - \frac{1}{p-1} + c_W, 
 \quad 
 \Mc = e^{- \frac{\tau}2} \RR^{-1},
 \quad \Vc = - \RR^{-1} V_{\tau}, 
 \quad \Pc = \Mc_{\tau} \Mc^{-1}.
\eeq

\vspace{0.1in}

\noindent \textit{Step 2 (Normalization and vanishing conditions)}:
{Let $\bar U$ be the profile defined in \eqref{def:Ubar}}. To determine {the law for the parameter functions} $H(\tau), V(\tau), \RR(\tau)$, we enforce the following normalization conditions on the amplitude $U$:
\beq\label{eq:intro_vanish}
k = 0,1,2, \quad \nabla^k U(0, \tau) = \na^k \bar U(0).  
\eeq
Since $\na^2 U \in \Rb^{d \times d}$ is symmetric, we have $1 + d + \f{d(d+1)}{2}$ different equations, which match the degrees of freedom of the dynamic variables exactly. 
The above conditions determine the initial {modulation parameters} $H(0), \RR(0), V(0)$ and the leading order system of $c_W, \Pc, \Vc$ \eqref{eq:intro_Pv} 
\beq\label{eq:intro_ODE}
\bal
&c_W = \frac{2(1 - \beta \delta)}{4\flat_*} \textup{tr}(\Qc) + O(\cE_0), 
\quad \Vc = O(\cE_0),\quad \Pc =O( |\Qc| + \cE_0), 
\quad  \textup{where} \quad \Qc = H^{p-1} e^{\tau} \Mc \Mc^T, 
  \\
\eal
\eeq
where $\cE_0$ tracks some lower order terms depending on the perturbation $(W, \Phi)$ \eqref{eq:W_Intro}, \eqref{eq:Phi_Intro}  and $\Qc$. 
The main unknown $\Qc \in \Rb^{d\times d}$ (a positive definite matrix) solves the following ODE 
\beq\label{eq:intro_ODE2}
\frac{d}{d\tau} \tr(\Qc) = -\tr(\Qc^2) + O(\cE_0 |\Qc|  )
\leq - \f{1}{d} (\tr(\Qc))^2 + O(\cE_0 |\Qc|  ) .
\eeq
From \eqref{eq:intro_ODE2}, \eqref{eq:intro_ODE}, \eqref{eq:intro_Pv}, we can control all the modulation parameters. A refined estimate using $\tr(\Qc)$ and $\tr(\Qc^{-1})$ yields $\Qc = \frac{1}{\tau}\textup{I}_d + O(\tau^{-3/2 +})$, together with an asymptotic refinement of the phase yield the asymptotics in Theorem \ref{thm:blp}.  See Section \ref{sec:ode} for deriving \eqref{eq:intro_ODE}, Section \ref{sec:refineasym} for the estimates of $\Qc$ and Proposition \ref{prop:linf} for the refinement of the phase.  

Roughly speaking, imposing \eqref{eq:intro_vanish} for $U$ is equivalent to imposing local orthogonality conditions for the perturbation $W = U - \bar U$ to $1, z_i, z_i z_j, 1 \leq i, j \leq d$. These functions are all the neutrally stable and unstable modes of $L =\textup{Id} - \frac{1}{2}z \cdot \nabla$, 
which behaves similarly to the main linearized operator $\msL_{\bar U}$ in \eqref{eq:W_Intro} {for $|z|$ small}.  We then get a damping in the weighted $L^2$ energy estimate.

\vspace{0.1in}
\noindent \textit{Step 3 (Stability analysis)}: We linearize $(U, \Th)$ around  the approximate steady state  $(\bar U, \bar \Theta)$ defined in \eqref{def:Ubar} and \eqref{ss-} and obtain the equations for the perturbation $W = U - \bar U, \Phi = \Th - \bar \Th$,
\bseq\label{eq:per_Intro}
\begin{align}
{W}_\tau &= \msL_{\bar U} W +\mathscr{F}_{ U}+\mathscr{N}_U+\mathscr{D}_U ,
\quad 
 \msL_{\bar U} W = \big(-\frac{1}{p-1} + p \bar U^{p-1} - \frac{1}{2}z \cdot \nabla \big)W \, \label{eq:W_Intro}\\
\Phi_\tau &=-\frac{1}{2}z \cdot \nabla \Phi+\mathscr{F}_\Theta+\mathscr{N}_\Theta+\mathscr{D}_\Theta, \label{eq:Phi_Intro}
\end{align}
\eseq
where $\mathscr{F}_U, \mathscr{N}_U, \mathscr{D}_U, \mathscr{F}_\Theta, \mathscr{N}_\Theta, \mathscr{D}_\Theta$ are small residue, nonlinear, and viscous terms \eqref{eq:lin}, and will be treated perturbatively. 
Below, we outline the stability estimates and focus on the linear part and the viscous terms.


{
\vspace{0.1in}
\textit{(a) Estimates of the $(W, \Phi)$} : $\mathscr{L}_{\bar U}$ to the leading order is the linearized equation for Riccati equation or semilinear heat equation. With the vanishing conditions $\na^k W = 0, k=0,1,2$ \eqref{eq:intro_vanish}, we obtain its stability using weighted $H^k$ estimates with singular weights. See 
 \cite{HNWarXiv24} in the case of $p=2$ and \cite{chen2024stability}, \cite{chen2021finite,chen2021HL}, \cite{chen2020singularity}. 

The right-hand side of \eqref{eq:per_Intro} only involves $\na \Phi$, which enjoys better stability. We estimate $\na \Phi$ by performing a similarly weighted $H^k$ estimate (starting from $k=1$) on \eqref{eq:Phi_Intro} and exploiting the term $-\frac{1}{2}z \cdot \nabla \Phi$.


In the nonlinear estimates  and the estimates of the phase, we need to control $1/U$. We use the maximal principle and a weighted $L^{\infty}$ estimate to obtain a lower bound of $U$. 




\vspace{0.1in}
\textit{(b) Estimates of the viscous terms}: 
For the viscous terms $\mathscr{D}_U, \mathscr{D}_\Theta$ \eqref{eq:vis}, the main difficulty 
is the coupling between $U$ and $\Theta$. At the top $H^K$ estimate, the highest order derivative terms read 
\begin{align*}
    \na^K \msD_{U} &= \D_{\Qc} \na^K U - \be U \D_{\Qc} \na^K \Th + l.o.t. \coloneqq I_1 + I_2 + l.o.t., \\
\ \na^K \msD_{\Th} & = \be \f{\D_{\Th} \na^K U}{U} + \D_{\Qc} \na^K \Th + l.o.t.
\coloneqq I_3 + I_4 + l.o.t.,
\end{align*}
where $\D_{\Qc} F$ is a weighted elliptic operator defined in \eqref{eq:Del_Q}. 
The terms $I_1, I_4$ lead to damping terms of $\nabla^{K+1}U$ and $\nabla^{K+1}\Theta$ via integration by parts. 
To control $I_2, I_3$, we exploit their cancellation using the energy $J_1$ below 
with some weight $\rho_K$ independent of $U, \Th$ and couple their estimates in $J_2$
\begin{equation}  \label{topcouple}
    J_1 =\int(|\nabla^{K}W|^2+{U}^2|\nabla^{K}\Phi|^2)\rho_K,
      \quad  J_2 = \int ( ( - \be U \D_{\Qc} \na^K \Th) \cdot \na^K U + (\be \f{\D_{\Th} \na^K U}{U}) \cdot U^2  \na^K \Phi) \rho_K.
\end{equation} 
 Applying integration by parts, $J_2$ reduces to some lower order terms and we can close the viscous estimates. 
For estimates of intermediate-order terms, we use interpolation inequalities following \cite{chen2024Euler}.

\vspace{0.1in}
\textit{(c) {Choosing the weights}}: 
To extract damping in the energy estimates, we need to design various suitable weights. The weights \eqref{wg:rho} for $W$ are very similar to those of the semilinear heat equation \cite{HNWarXiv24}. They are singular near $z=0$ for the lower order energy estimates and regular for the top order energy estimates so that the viscous terms will have a good sign.  In addition, we choose an almost optimal rate at the infinity for these weights to obtain a sharp decay estimate for $(W, \Phi)$ using interpolation and embedding following \cite{chen2024Euler}.

\vspace{0.1in}
\paragraph{\bf{Organization of the paper:}}

The rest of the paper is organized as follows. In Section \ref{sec:dyn}, we introduce the generalized dynamic rescaling formulation using the symmetries of \eqref{cgl} and derive the ODEs governing the modulation parameters. Section \ref{sec:stab} is devoted to the stability analysis of the profile. 
In Section \ref{sec:refineasym}, we establish the asymptotics of the blowup rate. 
In Section \ref{sec:thm_proof}, 
we prove Theorem \ref{thm:blp} and Theorem \ref{cor:stab}.

\vspace{0.1in}
\paragraph{\textbf{Notations:}} {
We use $\imath$ to denote the imaginary number, $\bar{f}$ to denote approximate profiles for the variable $f$, e.g.,$\bar U$, rather than conjugates, and  $(\cdot,\cdot)$ to denote the inner product on $\mathbb{R}^d$: 
$(f,g)=\int_{\mathbb{R}^d}f{g}$. For a weight $\rho$, we denote $\|f\|_{\rho}=(|f|^2,\rho)^{1/2}$. 
}
For matrix notations, we use $\mathrm{tr}(R)$ to denote the trace of a matrix $R$, $\mathbf{T}^u(R)$ to denote the upper triangular part of $R$; namely $(\mathbf{T}^u(R))_{ij}=R_{ij} \one_{ i\leq j}$. %
We use $\delta_{ij} = \one_{i = j}$ to denote the Kronecker delta function, 
and $| \mathbf{T}| := (\sum_{i} \TT_i^2)^{1/2}$ with summation over all entries $\TT_i$ to denote the tensor norm of a tensor $\mathbf{T}$, e.g., higher-order derivatives $\nabla^k f$. We use $C$ to denote an absolute constant only dependent on the constants $p,\beta,\delta,\gamma$ and the dimension $d$, which may vary from line to line. $C(\mu)$ denotes a constant depending on $\mu$. We denote $A=O(B)$ or $A\lesssim B$ if there exists an absolute constant $C>0$, such that $|A|\leq CB$, and denote $A\approx B$ if $A \les B$ and $B \les A$. Furthermore, we denote
\beq\label{eq:intro_nota}
\Lam = z \cdot \na , \quad \langle z\rangle=\sqrt{1+|z|^2}\,.
\eeq

\noindent \textbf{Parameters and special functions}: We introduce 
 \begin{equation}\label{ss-}
    \bar{\Theta} =  \frac{\delta}{p-1}\tau + \delta\log\bar{U} \,, \quad     c_p=\frac{(p-1)^2}{4\flat_*}, \quad \s = -\f{2}{p-1}.
    \end{equation}
We choose the weights for the $H^k$ estimates as follows:
\begin{equation}
\label{wg:rho}
    \begin{aligned}
        &\rho_k=|z|^{-6+\epsilon-d+2k}+c_0|z|^{- 2 \s -\epsilon-d+2k}, \   0\leq k\leq\frac{d+5}{2}\,, \quad 
    {\rho}_k=1+c_1|z|^{ - 2 \s  -\epsilon-d+2k}, \  \frac{d+5}{2}<k \, , \\
& \mathring{\rho}_k=|z|^{2k-1-d}\,,  \ 0< k\leq\frac{d}{2}\,, \qquad \qquad \ \mathring{\rho}_k=1+|z|^{2k-1-d}\, , \ \frac{d}{2}< k<K\,, \ \qquad \qquad \mathring{\rho}_K=U^2\rho_K\,,
\end{aligned}
\end{equation}
where we determine the constants in the following order: 
\begin{align}
K & =2d+4+2\left \lceil\frac{p+1}{\min\{p-1,c_p\}}\right \rceil\,, \label{K} \\
  \epsilon &{ =\min\{\frac{p-1}{5(p+3)(K+p)},\frac{4}{5(p + 3)(K+p)}\},}
 \quad \e_2 = \f{ (p-1) \e}{4} .
   \label{eps}
 \end{align}
For $c_0, c_1$ used in \eqref{wg:rho}, we determine $c_0$ via \eqref{c0} and $c_1$ via \eqref{defered}. {Note that $c_0, c_1$ only depends on $K, \e, p, \d$. Hence, they are considered as fixed constants throughout the paper.}

\section{Generalized dynamical rescaling formulation}\label{sec:dyn}

In this section, we introduce a generalized dynamic rescaling formulation and decompose the complex Ginzburg-Landau equation into the equation of the phase and the amplitude. We will consider a linearization around the approximate profiles and {choose the modulation parameters} based on the vanishing conditions. Finally, we will estimate the ODE of the {modulation parameters}. 

\subsection{Symmetries and renormalization}\label{sec:res}

We exploit the following symmetries of equation \eqref{cgl} to study stability for general perturbation, which will motivate our choice of rescaling. 
If $\psi(x,t)$ solves \eqref{cgl}, all of the following also solves \eqref{cgl}:
\begin{enumerate}
    \item Phase shift: $\psi_{a}(x,t)\coloneqq e^{
\imath a}\psi(x,t)$, for $a\in\mathbb{R}$.
  \item Parabolic scaling for $\gamma=0$: $\psi^{l }(x,t)\coloneqq l^{1/(p-1)}\psi( l^{1/2} x, l t)$, for $ l \in\mathbb{R}$. 
    \item Translation: $\psi_{{b}}(x,t)\coloneqq \psi(x-{b},t)$, for ${b}\in\mathbb{R}^d$.
    \item Rotation: $\psi^{R}(x,t)\coloneqq \psi(Rx,t)$, for  orthogonal matrix $RR^T=I_d$.
\end{enumerate}
We use the symmetry groups of the parabolic scaling via a rescale in amplitude, of the translation via a shift in space, and of the rotation via a rotation and rescaling in the spatial variable parametrized by an upper triangular matrix. {In sum, we can exploit modulation with $1+d+\frac{(d+1)d}{2}$ degree of freedom.} 
The phase shift invariance corresponding to a constant addition in $\Theta$ is taken care of in \eqref{est:limitmu} and \eqref{eq:thm_estb}. It is irrelevant to the dynamical modulation of stability since the right-hand sides of \eqref{eq:A_intro} and \eqref{eq:theta_intro} only involve the derivatives of $\Theta$. We remark that the modulation of symmetries due to Galilean transformations, including the rotation symmetry, has been used successfully to obtain shock formation in compressible Euler equations with fine characterization \cite{buckmaster2023formation}.  
Below, we will use a general upper triangle matrix, which simplifies the parametrization of the time-dependent orthogonal matrix.

For solution $\psi$ to \eqref{cgl}, we consider the amplitude-phase form $\psi(x,t)= u(x,t)e^{\imath\theta(x,t)}$, where $u(t): x \in \Rb^d \to \Rb_+$ and $\theta(t): x \in \Rb^d \to \mathbb{R}$. For the amplitude $u$ and phase $\psi$, we introduce the generalized dynamic rescaling formulation
\beq\label{eq:dyn_scal0}
U(z,\tau)=H(\tau) u\big( \RR(\tau)z+V(\tau) ,t(\tau)\big)\,,\quad \Theta(z,\tau)= 
 \theta\big(\RR(\tau)z+V(\tau),t(\tau)\big)\,,
\eeq
where the main unknown parameter functions are $ \RR \in \Cc^1 \big([\tau_0, +\infty), \Rb^{d\times d}\big)$ a non degenerate upper triangular matrix, $V \in \Cc^1 \big([\tau_0, +\infty), \Rb^{d}\big)$ and $H$ is given by
\beq\label{eq:dyn_scal1}
{H= H(0)\exp{(\int_0^\tau {c}_U(s) ds)}\,, \quad t(\tau) =\int_0^\tau H^{p-1}(s) ds\,.}
\eeq
In a compact form, we have
\begin{equation}\label{drf-nd}
    {U(z,\tau) e^{\imath \Theta(z,\tau) }=H(\tau) (u e^{\imath \theta})\big( \RR(\tau)z+V(\tau),t(\tau)\big), \quad \psi = u e^{\imath\theta}. }
\end{equation}
We decompose the solution into the approximate steady states \eqref{ss-},  with perturbations $W, \Phi$: 
     \begin{equation}\label{per_ansatz}
        U=\bar{U}+W\,,\quad \Theta=\bar{\Theta}+\Phi\,,\quad c_U= -\frac{1}{p-1} + c_W\,,\quad H =
e^{- \f{\tau}{p-1}  } C_W.
    \end{equation}
If $\RR(\tau)$ is a scalar factor and $V = 0$, \eqref{drf-nd} reduces to the standard dynamic rescaling formulation, see e.g., \cite{chen2021HL,chen2021finite,chen2019finite2}. If 
$\RR(\tau)$ is a diagonal matrix and $V = 0$, it reduces to a formulation similar to \cite{HNWarXiv24}. We will show that $\RR$ is close to some identity matrix and $V$ is some lower order term. 

We first compute the spatial derivative
$$\nabla U= H  e^{-\imath\Theta}\nabla \psi \RR-\imath H \psi e^{-\imath\Theta}\nabla\Theta \,,$$$$\nabla^2 U=H  e^{-\imath\Theta} \RR^T\nabla^2 \psi \RR-\imath H   e^{-\imath\Theta}(\nabla \psi \RR\nabla\Theta^T+\nabla \Theta \RR^T\nabla \psi^T)-H \psi e^{-\imath\Theta}\nabla\Theta\nabla\Theta^T-\imath H \psi e^{-\imath\Theta}\nabla^2\Theta\,.$$
We then write from \eqref{cgl} the equation for $U$,
\begin{equation*}\begin{aligned}
    {U}_\tau&=-\imath\Theta_\tau U+{c}_U {U}-( \f{1}{2} z+\Pc z+\Vc)\cdot\nabla U-\imath U ( \f{1}{2} z+ \Pc z+\Vc)\cdot\nabla\Theta +(1+\imath\delta){U}^{p}-{C}_U^{p-1}\gamma U\\& \quad +(1+\imath\beta)(\Delta_\Qc U+2\imath\langle\nabla U,\nabla\Theta\rangle_\Qc -U\langle\nabla \Theta,\nabla\Theta\rangle_\Qc+\imath U\Delta_\Qc \Theta)\,,
    \end{aligned}\end{equation*}
where $\Pc, \Vc$ are related to the matrix $\RR$ as  
 \beq \label{eq:Pv} 
 \Mc^{-1} = e^{-\tau/2} \RR, \quad
\Vc =- \RR^{-1}\dot{V}, \quad  \Pc=\dot{\Mc} \Mc^{-1} , \quad \Qc \coloneqq C_W^{p-1} \Mc \Mc^T\,,
\eeq
and we use the notation 
\begin{equation}\label{eq:Del_Q}
\Delta_\Qc f\coloneqq\text{tr}(\Qc\nabla^2f)\,,\quad \langle x,y\rangle_\Qc \coloneqq x^T\Qc y\,,\, \quad \forall x,y\in\mathbb{R}^d.
\end{equation}
Taking the real and imaginary parts we arrive at the following equations for $U$ and $\Theta$:
    \begin{align}
    \label{real}{U}_\tau & ={c}_U {U}-( \f{1}{2} z+ \Pc z+ \Vc)\cdot\nabla U+{U}^{p}-{C}_U^{p-1}\gamma U+\mathscr{D}_U\,,\\
\label{imagine}\Theta_\tau & =-( \f{1}{2} z+\Pc z+\Vc)\cdot\nabla\Theta+\delta{U}^{p-1}+\mathscr{D}_\Theta\,, 
\end{align}
where $\Ds_U$ and $\Ds_\Theta$ consists of the viscous terms and the nonlinear quadratic term we define  the viscous terms as follows: 
\bseq\label{eq:vis}
\begin{align}
    \label{real_v}
    \mathscr{D}_U & =\Delta_\Qc U-2\beta\langle\nabla U,\nabla\Theta\rangle_\Qc -U\langle\nabla \Theta,\nabla\Theta\rangle_\Qc -\beta U\Delta_\Qc \Theta\,, \\
    \label{im_v}
\mathscr{D}_\Theta & = \beta\frac{\Delta_\Qc  U}{U}+2\frac{\langle\nabla U,\nabla\Theta\rangle_\Qc }{U}-\beta \langle\nabla \Theta,\nabla\Theta\rangle_\Qc +\Delta_\Qc \Theta\,.
\end{align}
\eseq

We will show that the diffusion and $\Qc, c_W, \Vc, \Pc, H^{p-1}$ are lower order terms. See Remark \ref{rmk1}. Dropping these terms and setting $\pa_{\tau} U = 0$, we obtain the leading order parts of \eqref{real} and \eqref{imagine}:
$$
 -\frac{1}{p-1}\bar{U}- \f{1}{2} \Lambda\bar{U}+\bar{U}^p=0\,,\quad \bar{\Theta}_\tau =- \f{1}{2} \Lambda\bar{\Theta} +\delta{\bar{U}}^{p-1}\,,
 $$
 whose solution are given by the approximate profiles  $(\bar U, \bar \Th)$ defined in \eqref{def:Ubar}, \eqref{ss-}.

  \subsection{Initial rescaling and normalization conditions}\label{sec:nom}
    We will choose some proper initial modulation parameters $H(0), V(0), \Mc(0)$ and the dynamic variables {$c_W, \Vc, \Pc$} such that the perturbation $W$ vanishes to the third order. We denote the following constants 
    \begin{equation} \label{eq:dU_cons}
       {\kappa_0=\bar{U}(0)=(p-1)^{-\frac{1}{p-1}}\,,\quad \kappa_2=\partial_{1}^2\bar{U}(0)=-\frac{2c_p\kappa_0}{(p-1)^2}\,,\quad \kappa_4={\partial_{1}^4\bar{U}(0)}=\frac{12p c^2_p\kappa_0}{(p-1)^4}\,.}
\end{equation}

Given initial data $(u, \th)$ \eqref{drf-nd} satisfying \eqref{eq:ass_init}, we first define $\Mc_0, V_0, H(0), \Th_0, U_0$ using \eqref{eq:init_res}. Then we determine other initial rescalings and initial data using  
\[
V(0) = V_0, \quad \Mc(0) = \Mc_0, 
\quad \RR(0) = \Mc_0^{-1}, 
\quad \Th(z, 0) = \Th_0(z), \quad 
   U(z, 0) = U_0(z)  .
\]
We impose the following normalization conditions in time as
  $$
    U(0,\tau)=\bar{U}(0)=\kappa_0\,,\quad \nabla U(0,\tau)=\nabla \bar{U}(0)=0\,,\quad \nabla^2 U(0,\tau)=\nabla^2 \bar{U}(0)=\kappa_2 I_d\,. 
  $$
From \eqref{eq:init_res2}, the above holds for $\tau =0$. By the ansatz \eqref{drf-nd}, it reduces a dynamical condition in time $$\partial_\tau \nabla^k U(0,\tau)=0\,,\quad k=0,1,2\,,$$ which we can use \eqref{real} to simplify as
\beq\label{ode:v}
\bal
    & {c}_U+\kappa_0^{p-1}-H^{p-1}\gamma +\frac{\mathscr{D}_U(0)}{\kappa_0} =0 , \\
           & \kappa_2\Vc =\nabla \mathscr{D}_U(0), \\
     & ({c}_U- 1 +p\kappa_0^{p-1}-H^{p-1}\gamma)\delta_{ij}-(\Pc_{ij}+\Pc_{ji})+\frac{\partial_{ij}\mathscr{D}_U(0)-\Vc\cdot\nabla\partial_{ij}U(0)}{\kappa_2} =0\,,
      \eal
\eeq
for any indices $i,j$. Notice that the inverse of an upper-triangular matrix is still upper-triangular, and as a consequence $\Pc={\Mc}_\tau \Mc^{-1}$ is upper-triangular. We can further simplify the equations for $c_U$ and $\Pc$ by the ansatz \eqref{per_ansatz} as follows:
\begin{equation}
    \label{cq}c_W=-\frac{\mathscr{D}_U(0)}{\kappa_0} + H^{p-1}\gamma\,,\quad (1+\delta_{ij})\Pc_{ij}=-\frac{\mathscr{D}_U(0)}{\kappa_0}\delta_{ij}+\frac{\partial_{ij}\mathscr{D}_U(0)-\Vc\cdot\nabla\partial_{ij}W(0)}{\kappa_2}\,,\end{equation}
    for any $i
    \leq j$. We will estimate equations \eqref{cq} and \eqref{ode:v} in the next subsection.

   \subsection{ODE for the {modulation parameters}}\label{sec:ode}
   In this subsection, we simplify equations \eqref{cq} and \eqref{ode:v} to derive a leading order ODE. 
   {
     We will treat the perturbations $W, \Phi$ as low-order terms and estimate them in Section \ref{sec:stab}. Denote
 \begin{equation}
\label{ee}
\Ga
=\max_{0\leq k\leq 5, 1\leq l\leq 5}(\|\nabla^k W\|_{\infty},\|\nabla^l\Phi\|_{\infty})\, ,
\quad \cE_0 = |\Qc| (\Ga + \Ga^4) + H^{p-1}. 
 \end{equation} 
Clearly, we have $ \Ga^i |Q| \les \cE_0, 1 \leq i \leq 4 $. We use $\cE_0$ to track some lower order terms.
 }
 \begin{lemma}\label{lem1}
     We have the following estimates for the modulation parameters:
     \begin{equation}
         \label{norm_con}
           c_W=\frac{2(1-\beta\delta)}{(p-1)^2}c_p\textup{tr}(\Qc)+ \Oc(\cE_0)\,,\quad \Vc = \Oc( \cE_0)\,,
     \end{equation}
and
     \begin{equation}
         \label{norm_p}
         \Pc= \Oc(|\Qc|(1+ \Ga^4)+H^{p-1})\,, \quad
         {\Qc}_\tau=-(\Qc_u+\frac{1}{2}\Qc_d)\Qc-\Qc(\Qc_u^T+\frac{1}{2}\Qc_d)+ \Oc(\Ec_0|\Qc|),
     \end{equation}
     where $\Qc_u, \Qc_d$ are the strictly upper part and diagonal part of $\Qc$.

    
 \end{lemma}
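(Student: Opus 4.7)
The plan is to evaluate the viscous term $\mathscr{D}_U$ and its first two derivatives at $z=0$ using the normalization $\nabla^k U(0)=\nabla^k\bar U(0)$ for $k=0,1,2$ together with the radial structure of the profiles $\bar U,\bar\Theta$, then substitute into the explicit identities \eqref{cq} and \eqref{ode:v}, and finally derive the ODE for $\Qc$ from its definition in \eqref{eq:Pv}. The perturbations $W,\Phi$ and their derivatives will be treated as small quantities controlled by $\Ga$, with the products $|\Qc|\Ga^i$ and $H^{p-1}$ absorbed into $\cE_0$ from \eqref{ee}.

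\emph{Profile values and estimate of $c_W,\Vc$.} Since $\bar U$ and $\bar\Theta=\frac{\delta\tau}{p-1}+\delta\log\bar U$ are radial, odd-order derivatives vanish at $0$: $\nabla\bar U(0)=\nabla\bar\Theta(0)=0$ and $\nabla^3\bar U(0)=\nabla^3\bar\Theta(0)=0$. Using \eqref{eq:dU_cons} one also has $\nabla^2\bar\Theta(0)=(\delta\kappa_2/\kappa_0)I_d$ and the isotropic tensor identity $\partial_{klij}\bar U(0)=\tfrac{\kappa_4}{3}(\delta_{kl}\delta_{ij}+\delta_{ki}\delta_{lj}+\delta_{kj}\delta_{li})$, with $\kappa_4/3=p\kappa_2^2/\kappa_0$ inherited from the profile equation. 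Evaluating $\mathscr{D}_U(0)$ from \eqref{real_v} using $\nabla U(0)=0$ gives $\mathscr{D}_U(0)=(1-\beta\delta)\kappa_2\tr(\Qc)+O(\cE_0)$, and substituting into \eqref{cq} with $-\kappa_2/\kappa_0=2c_p/(p-1)^2$ yields the stated formula for $c_W$. For $\Vc=\nabla\mathscr{D}_U(0)/\kappa_2$, every summand of $\nabla\mathscr{D}_U(0)$ contains either a factor $\nabla U(0)=0$ or a factor $\nabla^3\bar U(0)=\nabla^3\bar\Theta(0)=0$, so only perturbative remainders survive, all bounded by $|\Qc|(\Ga+\Ga^2)\leq\cE_0$.

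\emph{Structure of $\Pc$ and ODE for $\Qc$.} Applying $\partial_{ij}$ to $\mathscr{D}_U$ and evaluating at $0$ via the Leibniz rule, each of the four summands of \eqref{real_v} contributes, at leading order, a linear combination of $\Qc_{ij}$ and $\tr(\Qc)\delta_{ij}$ with coefficients built from $\kappa_2^2/\kappa_0$, $\kappa_4/3$, and the parameters $\beta,\delta,p$. Collecting them and using $\kappa_4/3=p\kappa_2^2/\kappa_0$ together with the definition \eqref{def:subCri} yields
\[
\frac{\partial_{ij}\mathscr{D}_U(0)}{\kappa_2}=\frac{\kappa_2}{\kappa_0}\Big(p(1-\beta\delta)\tr(\Qc)\delta_{ij}+2\flat_*\Qc_{ij}\Big)+O(\cE_0)=-pc_W\delta_{ij}-\Qc_{ij}+O(\cE_0),
\]
where the identities $2\flat_*\kappa_2/\kappa_0=-1$ and $c_p=(p-1)^2/(4\flat_*)$ produce the clean coefficient $-1$ in front of $\Qc_{ij}$. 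Substituting into \eqref{cq} and absorbing $\Vc\cdot\nabla\partial_{ij}W(0)=O(\cE_0\Ga)$ gives the matrix form $\Pc=-\Qc_u-\tfrac{1}{2}\Qc_d-\tfrac{(p-1)c_W}{2}I_d+O(\cE_0)$, from which the pointwise bound $\Pc=O(|\Qc|(1+\Ga^4)+H^{p-1})$ follows using $\Ga\leq 1+\Ga^4$. Finally, differentiating $\Qc=C_W^{p-1}\Mc\Mc^T$ using $\dot C_W/C_W=c_W$ (from $c_U=-\tfrac{1}{p-1}+c_W$ and $H=e^{-\tau/(p-1)}C_W$) and $\dot\Mc=\Pc\Mc$ yields $\Qc_\tau=(p-1)c_W\Qc+\Pc\Qc+\Qc\Pc^T$; inserting the structured form of $\Pc$, the scalar $(p-1)c_W\Qc$ terms cancel and the remaining pieces give the announced ODE.

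\emph{Main obstacle.} The delicate step is the explicit computation of $\partial_{ij}\mathscr{D}_U(0)$. Each of the four summands of $\mathscr{D}_U$ contributes mixed tensor expressions involving $\nabla^4\bar U(0)$, $\nabla^2\bar\Theta(0)$, $\nabla^4\bar\Theta(0)$, and various products with $\Qc$, and it is not a priori clear that they collapse to a clean linear combination of $\Qc_{ij}$ and $\tr(\Qc)\delta_{ij}$ with the precise coefficient $-1$ in front of $\Qc_{ij}$. This cancellation is the arithmetic origin of the Riccati-type structure of the ODE for $\Qc$ and is what justifies the choice of profile $\bar U$ and phase $\bar\Theta$; without the identity $p-\delta^2-\beta\delta(p+1)=\flat_*$ the ODE would not close.
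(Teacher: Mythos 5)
Your proposal is correct and follows essentially the same route as the paper: evaluate $\mathscr{D}_U(0)$, $\nabla\mathscr{D}_U(0)$, $\partial_{ij}\mathscr{D}_U(0)$ via the Taylor data of $\bar U,\bar\Theta$ at the origin (treating $W,\Phi$ through $\Ga$ and $\cE_0$), plug into \eqref{cq}--\eqref{ode:v} using the identities $\kappa_4/(3\kappa_2)=p\kappa_2/\kappa_0$ and $\flat_*\kappa_2/\kappa_0=-\tfrac12$, and then differentiate $\Qc=C_W^{p-1}\Mc\Mc^T$ with $\dot\Mc=\Pc\Mc$. Your structured form $\Pc=-\Qc_u-\tfrac12\Qc_d-\tfrac{p-1}{2}c_W I_d+O(\cE_0)$ is exactly the paper's identity \eqref{P} (since $\mathbf{T}^u(\Qc-\tfrac12\mathrm{diag}(\Qc))=\Qc_u+\tfrac12\Qc_d$), and the cancellation of the scalar $(p-1)c_W\Qc$ terms in the $\Qc$ equation matches the paper's computation.
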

 {
    \begin{remark}\label{rmk1}
    Formally to the leading order when we take the trace, we have 
    $$
    \textup{tr}(\Qc)_\tau\approx-\textup{tr}(\Qc^2)\,,\quad\textup{tr}(\Qc^{-1})_\tau\approx-d\,.
    $$ 
    Recall that $\Qc=C_W^{p-1}\Mc\Mc^T$ is positive. 
    We can estimate $\tr(\Qc), \tr(\Qc^{-1})$ and obtain 
    $\Qc\approx \tau^{-1} \textup{I}_d$. Therefore $\Qc, c_W, \Vc, \Pc$ are indeed small and the viscous terms can be treated perturbatively. We will make this heuristic rigorous by choosing $H(0)=C_W(0)$ small; see Section \ref{sec:boot}. Although we allow anisotropicity in the initial data, the profile will converge to a isotropic one, i.e. $\Qc$ will converge to a diagonal matrix.
    \end{remark}
    }
 \begin{proof}
 Notice that by \eqref{real_v}, we have 
\begin{equation}
\label{du_exp}\mathscr{D}_U=\text{tr}(\Qc S_1)\,,\quad S_1=\nabla^2 U-2\beta\nabla U\nabla\Theta^T-U\nabla \Theta\nabla\Theta^T-\beta U\nabla^2\Theta\,.
\end{equation} We can simplify \eqref{cq} by an asymptotic expansion of $S_1$ near the origin up to the second order.
\[
\bal
\bar{U} & =\kappa_0+\frac{\kappa_2}{2}|z|^2+\frac{\kappa_4}{24}|z|^4+O(|z|^6)\,,\quad \nabla\bar{\Theta}=\frac{\delta}{\bar{U}}{\nabla\bar{U}}=\delta\frac{\kappa_2}{\kappa_0}z+O(|z|^3)\,, \\
\nabla^2\bar{U}& =\kappa_2I_d+\frac{\kappa_4}{6}|z|^2I_d+\frac{\kappa_4}{3}zz^T+O(|z|^4)\,, \\
\nabla^2\bar{\Theta} & =\frac{\delta}{\bar{U}}{\nabla^2\bar{U}}-\frac{\delta}{\bar{U}^2}{\nabla\bar{U}\nabla\bar{U}^T}=\delta(\frac{\kappa_2}{\kappa_0}I_d+(\frac{\kappa_4}{6\kappa_0}-\frac{\kappa_2^2}{2\kappa_0^2})(|z|^2I_d+2zz^T))+O(|z|^4)\,. 
\eal
\]
{Decomposing $U = \bar U + W, \Th = \bar \Th + \Phi$ \eqref{per_ansatz} and using $\Ga$ \eqref{ee} to control the perturbation $W, \Phi$, we expand}
$$\begin{aligned}
    S_1&=\kappa_2(1-\beta\delta)I_d+(\frac{\kappa_4}{3}+(-2\beta-\delta)\delta\frac{\kappa_2^2}{\kappa_0}-\beta\delta(\frac{\kappa_4}{3}-\frac{\kappa_2^2}{\kappa_0}))zz^T\\&+(\frac{\kappa_4}{6}-\beta\delta(\frac{\kappa_2^2}{2\kappa_0}+\frac{\kappa_4}{6}-\frac{\kappa_2^2}{2\kappa_0}))|z|^2I_d+O(|z|^4+  \Ga + \Ga^3 )\,.
\end{aligned}$$
{The estimates for $\na^i S_1$ are similar. We have chosen $\Ga$ \eqref{ee} to control $\na^k W, \na^{k+1} \Phi$ with high enough order $k$. In particular, for an error term $I$ in $S_1$ bounded by $|z|^4 + \Ga + \Ga^3$, e.g.,$(U - \bar U) \na^2 (\Theta - \bar \Theta)$, we have 
\[
 |\na^i I| \les |z|^{4-i} + \Ga + \Ga^3, \ i = 1, 2.
\]  
Since we only need the expression at $z=0$, the error term $O(|z|^j), j \geq 1$ vanishes in the following derivations. For this reason, we do not track the constant associated with $|z|^j$. 
}

As a consequence, we have the expressions for derivatives of $\mathscr{D}_U$ as 
\[
\bal
 \mathscr{D}_U(0) & = \kappa_2(1-\beta\delta)\text{tr}(\Qc)+ {O(\cE_0)}
\,,\quad \nabla \mathscr{D}_U(0)= { O(\cE_0)} \,, \\
 \partial_{ij}\mathscr{D}_U(0) & = \delta_{ij}(1-\beta\delta)\frac{\kappa_4}{3}\text{tr}(\Qc)+2((1-\beta\delta)\frac{\kappa_4}{3}-(\beta+\delta)\delta\frac{\kappa_2^2}{\kappa_0})\Qc_{ij}+ { O(\cE_0) }\, .
\eal
\]

We plug the estimates into \eqref{cq} and \eqref{ode:v} and get 
\[
\bal
  & c_W=-\frac{\kappa_2}{\kappa_0}(1-\beta\delta)\text{tr}(\Qc)+ {O(\cE_0)}\,,\quad v={ O(\cE_0) }\,, \\
   & \Pc=\mathbf{T}^u\Big[\frac{\kappa_4}{6\kappa_2}(1-\beta\delta)\textup{tr}(\Qc)I_d+((1-\beta\delta)\frac{\kappa_4}{3\kappa_2}-\big(\beta+\delta)\delta\frac{\kappa_2}{\kappa_0}\big)\big(2\Qc-\textup{diag}(\Qc)\big)\Big] \\
   & \qquad \qquad -\mathbf{T}^u(\frac{\kappa_2}{2\kappa_0}(1-\beta\delta)\textup{tr}(\Qc)I_d)+ {O(\cE_0)} \,,
\eal
\]
where we recall that $\mathbf{T}^u$ is the upper-triangular part of the matrix.   Notice that by \eqref{eq:dU_cons}, we have the relationship $$\frac{\kappa_4}{6\kappa_2}=p\frac{\kappa_2}{2\kappa_0}\,,\quad(1-\beta\delta)\frac{\kappa_4}{3\kappa_2}-(\beta+\delta)\delta\frac{\kappa_2}{\kappa_0}=(p-\delta^2-\beta\delta(1+p))\frac{\kappa_2}{\kappa_0}=-\frac{1}{2}\,.$$
Therefore, we collect 
\begin{equation}
    \label{P} \Pc+\frac{p-1}{2}c_WI_d=-\mathbf{T}^u(\Qc-\frac{1}{2}\textup{diag}(\Qc))+ O(\cE_0)\,.
\end{equation}

Recall that ${\Mc}_\tau=\Pc \Mc$ by definition, and we can compute 
$$
{\Qc}_\tau=(p-1)c_W \Qc+C_W^{p-1}(\Pc \Mc \Mc^T+\Mc \Mc^T\Pc^T)=\big(\Pc+\frac{p-1}{2}c_WI_d\big)\Qc+\Qc\big(\Pc+\frac{p-1}{2}c_WI_d\big)^T\,.
$$
If we decompose $Q$ into the strictly upper, lower, and diagonal parts as $\Qc=\Qc_u+\Qc_u^T+\Qc_d$, then we can simplify 
$$
{\Qc}_\tau=-(\Qc_u+\frac{1}{2}\Qc_d)\Qc-\Qc(\Qc_u^T+\frac{1}{2}\Qc_d)+ \cE_\Qc  \Qc + \Qc \cE_\Qc ^T,
\quad \cE_\Qc  = O(\cE_0).
$$ 
and we conclude the proof of the lemma.
\end{proof}
\begin{remark}
{
In the above ODE of $\Qc$, we get $-1$ for the coefficient of $\Qc^2$ since we have 
normalized the profile \eqref{ss-}. The factor $p-\delta^2-\beta\delta(1+p) > 0$ (corresponding to the subcritical case) appears in the constant $c_p$. 
    For the critical case $$p-\delta^2-\beta\delta(1+p)=0\,,$$ without such a normalization}, 
 we can see that if we do something similar, the coefficient 
of $\Qc^2$ will be zero.  We could keep track of the next order terms of size $\Ga$ to derive a system  of size $|\Qc|^3$. This can potentially help us establish a result similar to \cite{DNZmems23} but we do not pursue it here.
\end{remark}

 \section{Stability analysis and finite time blowup}\label{sec:stab}

{
 In this section, we perform stability analysis and establish nonlinear stability of the perturbation around the approximate steady state following the ideas and strategy outlined in 
 Section \ref{sec:idea}. 

We linearize \eqref{real} and \eqref{imagine} around the approximate profile as in ansatz \eqref{per_ansatz}  and obtain the equations of the perturbations as follows:
\begin{equation}
    \label{realp}{W}_\tau=\mathscr{L}_{\bar{U}}W+\mathscr{F}_U+\mathscr{N}_U+\mathscr{D}_U\,,\end{equation}     
    \begin{equation}
\label{imaginep}\Phi_\tau =- \f{1}{2} \Lambda \Phi+\mathscr{F}_\Theta+\mathscr{N}_\Theta+\mathscr{D}_\Theta\,,\end{equation}
{where we recall from \eqref{real_v} and \eqref{im_v} the definition of $\Ds_U$ and $\Ds_\Theta$, and define the linear, residue, and nonlinear parts respectively as}
\beq\label{eq:lin}
\bal
 \mathscr{L}_{\bar{U}}W & =c_{\bar{U}} W- \f{1}{2} \Lambda W+p\bar{U}^{p-1}W\,,
 \quad \Lambda=z\cdot\nabla , \\
 \mathscr{F}_U & =c_W U-(\Pc z+\Vc)\cdot\nabla U-{C}_U^{p-1}\gamma U\,,\quad\mathscr{N}_U=(\bar{U}+W)^p-\bar{U}^p-p\bar{U}^{p-1}W\,,  \\
\mathscr{F}_\Theta &=-(\Pc z+\Vc)\cdot\nabla {\Theta}\,,\quad\mathscr{N}_\Theta=\delta((\bar{U}+W)^{p-1}-\bar{U}^{p-1})\,. 
\eal
\eeq
We will group the terms by integrability:  $\mathscr{L}_U$ and $\mathscr{N}_U$ vanish to the third order at the origin. Recall that by Lemma \ref{lem1} and Remark \ref{rmk1}, we know that $\Qc, c_W, \Vc, \Pc$ are small. The viscous terms $\mathscr{D}_U$, $\mathscr{D}_\Theta$ are small of order $|\Qc|$ with a typical size of $1/\tau$. 
Also obviously $H=C_We^{-\tau/(p-1)}$ is small.

 We define the weighted $H^k$ energy  as follows 
\begin{equation}\label{norm:Ek}
\bal
    & E_k^2=(|\nabla^k W|^2,\rho_k)\,,\quad 0\leq k\leq K\,, \quad 
    F_k^2=(|\nabla^k \Phi|^2,\mathring{\rho}_k)\,,\quad 0< k\leq K\,.
    \eal
\end{equation}
Our goal is to prove the following nonlinear stability results.
  \begin{theorem}\label{thm:stab}
  Denote $E_{\Qc} = \tr(\Qc)$. There exists $ 0< E_* < 1$ sufficiently small and $\mu_3 > 0$, such that for any initial perturbation satisfying 
  \beq\label{eq:thm_stab_ass}
U \rho > 2 C_b, \quad E(0) < E_*, \quad E_{\Qc}  < E_*, \quad H^{p-1}(0) < E_*,
  \eeq
we have the following estimates for all $\tau > 0$
\beq\label{eq:non_stab}
\bal
 & E_\Qc (\tau)  \leq  \min( 2 E_\Qc (0), 4 d / \tau ), \quad U(\tau) \rho > C_b , 
 \quad H^{p-1}(\tau) < H^{p-1}(0) e^{-\tau / 2} , \\
& E(\tau) \leq e^{-\lam \tau/2} (E(0) + \mu_3 H^{p-1}(0) +\mu_3  E_\Qc (0)) + \mu_3  \min(E_\Qc (0), 1 / \tau).
\eal
\eeq
Moreover, the boostrap assumptions \ref{asss}, \ref{ass:lower}, \ref{ass:refine} (introduced below) hold for all $\tau > 0$. 
  \end{theorem}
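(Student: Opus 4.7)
I would organize the argument as a continuity/bootstrap that runs the four inequalities in \eqref{eq:non_stab} simultaneously with the structural assumptions \ref{asss}, \ref{ass:lower}, \ref{ass:refine}. Assume on a maximal interval $[0,\tau_*)$ that each of the four bounds holds with slightly relaxed constants; the task is to strictly improve all four, so by continuity $\tau_*=\infty$. The residue and nonlinear terms $\msF_U,\msF_\Theta,\msN_U,\msN_\Theta$ in \eqref{eq:lin} are treated perturbatively using Lemma \ref{lem1}, and the viscous terms $\msD_U,\msD_\Theta$ carry a prefactor $|\Qc|\lesssim 1/\tau$ that makes them subcritical relative to the linear damping.

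The first block of estimates concerns the modulation ODEs. Sobolev embedding in the weighted $H^K$ spaces (legitimate thanks to the choice of $K$ in \eqref{K}) bounds $\Ga$ by $E$, hence bounds the error $\cE_0$ in Lemma \ref{lem1} by $|\Qc|E+|\Qc|E^4+H^{p-1}$. Taking the trace of the $\Qc$-equation \eqref{norm_p} and applying Cauchy--Schwarz on the positive-definite matrix $\Qc$ gives the Riccati inequality
\begin{equation*}
\tfrac{d}{d\tau}\tr(\Qc)\le -\tr(\Qc^2)+O(\cE_0|\Qc|)\le -\tfrac{1}{d}\tr(\Qc)^2+\text{l.o.t.},
\end{equation*}
which integrates to $E_\Qc(\tau)\le\min(2E_\Qc(0),4d/\tau)$; positivity of $\Qc$ is preserved by the Lyapunov form $\dot\Qc=A\Qc+\Qc A^T+\text{l.o.t.}$. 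The relation $c_U=-\tfrac{1}{p-1}+c_W$ with $c_W=O(E_\Qc+H^{p-1})$ then converts $(H^{p-1})_\tau=(p-1)c_U H^{p-1}$ into $(H^{p-1})_\tau=-H^{p-1}+O(E_\Qc H^{p-1}+H^{2(p-1)})$, and the bootstrap smallness of $E_\Qc,H^{p-1}$ yields the exponential decay $H^{p-1}(\tau)<H^{p-1}(0)e^{-\tau/2}$.

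For the energy $E$ I would perform weighted $L^2$ estimates of $\nabla^k W$ against $\rho_k$ for $0\le k\le K$ and of $\nabla^k\Phi$ against $\mathring\rho_k$ for $1\le k\le K$. The transport term $-\tfrac{1}{2}\Lam$ is made coercive by the integration-by-parts gain whose sign is controlled by the exponents in \eqref{wg:rho}, and the multiplier $p\bar U^{p-1}$ decays at infinity. The crucial ingredient is the normalization $\nabla^k W(0,\tau)=\nabla^k\bar U(0)$ for $k=0,1,2$, which forces $W=O(|z|^3)$ near the origin, renders the singular weights $|z|^{-6+\e-d+2k}$ integrable against $|W|^2$ for small $k$, and eliminates the unstable and neutrally stable modes of $\msL_{\bar U}$, producing the damping rate $\lam>0$. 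The main obstacle is the top-order viscous coupling: the cross terms $-\beta U\Delta_\Qc \nabla^K\Theta$ in $\nabla^K\msD_U$ and $\beta \Delta_\Qc \nabla^K U/U$ in $\nabla^K\msD_\Theta$ are each formally critical. I would cancel them using the joint functional $J_1$ in \eqref{topcouple} with the solution-dependent weight $\mathring\rho_K=U^2\rho_K$: after integration by parts, the paired integral $J_2$ telescopes into harmless lower-order commutators, while the diagonal terms $I_1,I_4$ deliver a coercive $|\Qc|$-damping. Intermediate-order derivatives are handled by the weighted interpolation inequalities of \cite{chen2024Euler}. The pointwise bound $U\rho>C_b$ propagates separately from a weighted $L^\infty$ maximum-principle estimate on $U\bar U^{-1-\e_2}$, as in \cite{HNWarXiv24}, using smallness of $W,\Phi$ and positivity of the reaction term $U^p$.

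Collecting, the energy satisfies $\tfrac{d}{d\tau}E^2\le-\lam E^2+C E_\Qc E+C H^{p-1}E+\text{l.o.t.}$, and Gr\"onwall integration against the rates from the first block produces exactly the bound on $E(\tau)$ in \eqref{eq:non_stab}. Taking $E_*$ sufficiently small strictly improves each bootstrap inequality and closes the argument, simultaneously propagating \ref{asss}, \ref{ass:lower}, \ref{ass:refine} to all $\tau>0$. The genuine obstacle is the top-order coupling: selecting $\mathring\rho_K=U^2\rho_K$ is what makes the otherwise critical coupling between amplitude and phase gradients cancel exactly rather than only be bounded, and is what allows the weighted energy method to succeed despite the non-self-adjoint, non-variational structure of \eqref{cgl}.
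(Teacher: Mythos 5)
Your proposal is correct and follows essentially the same route as the paper: weighted $L^2$/$H^1$/$H^K$ energy estimates with the singular weights and vanishing conditions at the origin, the exact top-order cancellation via the $U^2\rho_K$-weighted functional \eqref{topcouple}, interpolation for intermediate orders, a maximum-principle/barrier argument for the lower bound $U\rho>C_b$, the Riccati inequality for $\tr(\Qc)$ and exponential decay of $H^{p-1}$ from the modulation ODEs, and a Gr\"onwall-plus-continuation bootstrap that strengthens all assumptions simultaneously. The only cosmetic differences (propagating positivity of $\Qc$ via the Lyapunov form rather than Jacobi's formula for $\det(\Qc)$, and phrasing the energy estimate at all orders $k\le K$ rather than only $k=0,1,K$ with interpolation) do not change the substance of the argument.
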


Note that the parameters $\mu_1, \mu_2$ will be introduced in the proof in Section \ref{sec:boot}.

We impose the following weak bootstrap assumptions for nonlinear estimates. To control $1/U$, which will appears in the estimate of $\msN_{U}, \msN_{\Th}$, we impose a lower bound on $U$, which is almost comparable with $\bar{U}$, up to a small power. To simplify the notations in the nonlinear estimates and to ensure that $\RR, \Mc$ are invertible \eqref{eq:Pv}, \eqref{per_ansatz}, we impose the following weak assumptions on the energy $E_i, F_j$ and $\det(\Qc)$.
\begin{assumption}
\label{asss}
Let  $\e_2$ be defined in \eqref{eps}. We impose the following bootstrap assumptions
\begin{align}
    & U\geq C_b\bar{U}^{1+\epsilon_2} , \quad  C_b = \min_{|z|\leq 1} \bar U^{-\e_2}(z) / 4 > 0, \label{eq:boot_U}  \\
&  \max(E_K, E_0, F_K, F_1) \leq 1 ,
\quad \quad \det(\Qc) > 0.\label{eq:boot_EE}
\end{align}
\end{assumption}

Below, we will first establish some functional inequalities in Section \ref{sec:ineq}. 
We will start with the $L^2$ analysis of perturbations $W$ and $\nabla\Phi$ in Sections \ref{l2sec}, \ref{h1sec}, and then build higher-order estimates in Section \ref{hksec}. 
We obtain a lower bound of the amplitude $U$ via the maximal principle in Section \ref{sec:linfty}, 
inspired by \cite{chen2021HL}.
Then we close the nonlinear estimates and prove Theorem \ref{thm:stab} via a bootstrap argument in Section \ref{sec:boot}.

}

\subsection{Functional inequalities}\label{sec:ineq}

In this section, we establish a few functional inequalities, which will be used to 
estimate the decay of the solution and close the nonlinear estimates. We introduce the following norms 
\beq\label{norm:Hk}
|| f ||_{\dot \cH^k} := || \na^k f g_k^{1/2} ||_{L^2}, \quad 
 || f ||_{\cH^k} := || f ||_{\dot \cH^k}  + || f ||_{\dot \cH^0}, \quad 
 g_k = \la z \ra^{ - 2 \s - \e - d + 2k} .
\eeq
By definition of $\rho_k$ \eqref{wg:rho} and \eqref{norm:Ek} for $E_k$, we have 
\beq\label{eq:norm_comp}
g_k \les \rho_k, \quad 
|| f ||_{\dot \cH^k} \les  E_k, 
\quad  || f ||_{ \cH^k} \les E_0 + E_k.
\eeq
We define the low-order terms $\cE_1$, $\cE_2$ that we later show to be small: 
\beq\label{eq:lot}
\bal
\cE_1 & :=   |\Qc|+H^{p-1}+ \sum_{ i \leq K - 1} E_i + \sum_{ 1\leq j \leq  K-1}  F_j 
+ ( F_K+F_1+E_K+E_0)^2,  \\
\cE_2  & := |\Qc |+H^{p-1} .
\eal
\eeq
We treat $\cE_1$  as a lower order term since it either contains nonlinear terms or energy with order lower than $E_K, F_K$. Note that $\cE_2$ is the low-order term of order $P$ in Lemma \ref{lem1} by asuming \eqref{eq:boot_EE}, which implies $\Ga \les 1$. See \eqref{eq:EE_embed}.

Following Lemma C.4 in \cite{chen2024Euler}, we have the following weighted interpolation and embedding inequalities.

\begin{proposition}[Interpolation ]
    \label{prop:inter}
    Let $\s = - \f{2}{p-1}$ and $\e$ be the constants defined in \eqref{eq:dU_cons}, \eqref{eps}. For any $\mu>0$, there exists a constant $C(\mu)$, such that the following interpolation  inequalities hold:    
\begin{subequations}\label{eq:inte}
\begin{align}
        E_k &  \leq \mu E_l+C(\mu)E_0\,,\quad \forall 0\leq k< l \leq K\,, \label{eq:inte:1}  \\
     F_k &  \leq \mu F_l+ C( \mu)F_1\,,\quad \forall 1\leq k<l \leq K\,,  \label{eq:inte:2}  \\
     || f  ||_{\dot \cH^k} & \leq \mu || f||_{\dot \cH^l} + C(\mu) || f ||_{\cH^0} , \quad \forall 0\leq k< l \leq K .     \label{eq:inte:Hk}  
\end{align}
\end{subequations}
Moreover, we have the following embedding 
\bseq\label{eq:embed_linf}
\begin{align}
     |\nabla^l W| & \les  \la z \ra^{-l +\s + \e/ 2} \min( E_{l+d} , || W||_{\cH^{l+d}} )   , \ 
\forall 0\leq l\leq K - d \ \label{eq:inte:3} , \\
 | \nabla^l\Phi | & \les   \langle z\rangle^{-l + 1/2} F_{l + d}  ,\quad \forall 1\leq l\leq K - d-1\, \label{eq:inte:4}  . 
 \end{align}
 \eseq

As a result, for $k < l$, we have the following product rule for $i\leq n, j \leq m$ with $ i+ d \leq n$, or $j + d \leq m$,
\beq\label{eq:prod}
|| \na^i F \na^j G g_{i+j}^{1/2} ||_{L^2} 
\les || F ||_{\cH^{n}} || G ||_{\cH^m} .
\eeq

Finally, assuming \eqref{eq:boot_EE}, for $0\leq i\leq K, 1 \leq j \leq K$ and $\Ga$ defined in \eqref{eq:EE_ODE2}, we have 
\beq\label{eq:EE_embed}
E_i \les 1, \quad F_j \les 1, \quad  \Ga \les 1.
\eeq


\end{proposition}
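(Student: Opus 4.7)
The plan is to adapt the template of Lemma C.4 in \cite{chen2024Euler}, the main task being verification that the specific weights $\rho_k$, $\mathring\rho_k$, $g_k$ defined in \eqref{wg:rho}, \eqref{norm:Hk} satisfy the structural relations required. The key observation is that, up to absolute constants, consecutive weights differ by a factor of $|z|^2$ (or $\langle z\rangle^2$ in the smooth regime): a direct comparison of exponents gives $\rho_k \les \rho_{k-1} |z|^2$ and $\rho_k \les \rho_{k+1} |z|^{-2}$ whenever these comparisons lie in the same regime (singular near the origin or polynomial at infinity), and moreover $|\nabla \rho_k|^2 \les \rho_{k-1} \rho_k$ pointwise. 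Similar relations hold for the $\mathring\rho_k$ and the $g_k$.

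For the interpolation bounds \eqref{eq:inte:1}--\eqref{eq:inte:Hk}, I will first establish the three-term estimate $E_k^2 \les E_{k-1} E_{k+1}$ for $1 \le k \le K-1$ via the integration by parts identity
\[
\int |\nabla^k W|^2 \rho_k = -\int \nabla^{k-1} W \cdot \nabla \cdot (\nabla^k W\, \rho_k),
\]
expanding the divergence into $\nabla^{k+1} W \rho_k + \nabla^k W \otimes \nabla \rho_k$ and applying Cauchy--Schwarz with the weight inequalities noted above. Log-convexity (Young's inequality) then yields $E_k \le \mu E_l + C(\mu) E_0$ for any $0 \le k < l \le K$ by finite iteration. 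The same argument with the weights $\mathring\rho_k$ or $g_k$ delivers \eqref{eq:inte:2} and \eqref{eq:inte:Hk}. A minor technical point is the transition index $k = (d+5)/2$ at which the functional form of $\rho_k$ switches from singular to regular near the origin; direct comparison shows the pair of estimates $\rho_k \les \rho_{k-1} |z|^2 + \rho_{k+1} |z|^{-2}$ and $|\nabla \rho_k|^2 \les \rho_{k-1} \rho_k$ persist across the transition, so no separate case analysis is needed.

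The embedding inequalities \eqref{eq:inte:3}--\eqref{eq:inte:4} follow from a dyadic/annular decomposition. On each annulus $\{|z| \sim R\}$ for $R \gtr 1$ (and on a ball near the origin), rescale to the unit ball and apply the unweighted Sobolev embedding $H^d \hookrightarrow L^\infty$, producing
\[
\|\nabla^l W\|_{L^\infty(\{|z|\sim R\})}^2 \les \sum_{j=0}^{d} R^{-d + 2(j-l)} \|\nabla^{l+j} W\|_{L^2(\{|z|\sim R\})}^2.
\]
On such an annulus the weight $\rho_{l+j}$ (respectively $g_{l+j}$) is comparable to $R^{-2\sigma - \e - d + 2(l+j)}$, so dividing and multiplying reduces the right-hand side to the annular contributions of $E_{l+d}^2$ (respectively $\|W\|_{\cH^{l+d}}^2$), while the $R$-powers combine into $R^{-2l + 2\sigma + \e}$; taking a square root gives the advertised factor $\langle z\rangle^{-l + \sigma + \e/2}$. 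The same reasoning with the weights $\mathring\rho_j$ yields \eqref{eq:inte:4}.

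The product rule \eqref{eq:prod} is then a direct consequence: placing the factor with at least $i+d$ (or $j+d$) derivatives in $L^\infty$ via the embedding above and the other factor in the weighted $L^2$ norm, one only checks the pointwise weight inequality $g_i \cdot \langle z\rangle^{-2j + 2\sigma + \e + d} \les g_{i+j}$, which is immediate from the formula $g_k = \langle z\rangle^{-2\sigma - \e - d + 2k}$. Finally, \eqref{eq:EE_embed} follows at once: \eqref{eq:boot_EE} bounds $E_0, E_K, F_1, F_K$ by $1$, the interpolations \eqref{eq:inte:1}--\eqref{eq:inte:2} then control all intermediate $E_i, F_j$, and the embedding \eqref{eq:inte:3} controls $\Gamma$ defined in \eqref{ee} since $K \ge 2d + 4$ from \eqref{K} leaves room to spare for $L^\infty$ control of derivatives through order $5$. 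The main anticipated obstacle is bookkeeping: keeping the exponent comparisons correct across the singular and polynomial regimes of $\rho_k$ and at the transition index $k=(d+5)/2$, rather than any genuinely new analytic difficulty beyond the template of \cite{chen2024Euler}.
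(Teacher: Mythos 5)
Your plan follows the paper's architecture for the interpolation, product rule, and \eqref{eq:EE_embed}, but the sentence ``similar relations hold for the $\mathring\rho_k$'' hides the one genuinely non-automatic step, and this is a real gap. The chain for \eqref{eq:inte:2} with $l=K$ requires, at the top link $k=K-1$, the inequality $\mathring{\rho}_{K-1}^2 \les \mathring{\rho}_K\,\mathring{\rho}_{K-2}$, and $\mathring{\rho}_K=U^2\rho_K$ is a \emph{solution-dependent} weight, so this is not a direct comparison of exponents. It reduces to $\la z\ra^{2K-1-d}\les U^2\la z\ra^{-2\s-\e-d+2K}$, which needs the lower bound $U\geq C_b\bar U^{1+\e_2}$ from the bootstrap Assumption \ref{asss} together with the choice $2\e<1$ in \eqref{eps}; this is exactly the check the paper performs, and without it your \eqref{eq:inte:2} is unproved precisely in the case used to control $F_{K-1}$ by $F_K$ and $F_1$. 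A smaller inaccuracy in the same part: the one-step inequality $\rho_k\les\rho_{k-1}|z|^2$ is false near the origin once $k-1>(d+5)/2$ (there both weights are $\approx 1$, so the right-hand side is $\approx|z|^2$); what is true, and what your Cauchy--Schwarz step actually needs, is the product form $\rho_k^2\les\rho_{k+1}\rho_{k-1}$ together with $|\na\rho_k|^2\les\rho_k\rho_{k-1}$, and the resulting three-term bound is $E_k^2\les E_{k-1}(E_k+E_{k+1})$ rather than $E_k^2\les E_{k-1}E_{k+1}$ (this changes nothing downstream, but state it correctly).

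For the embeddings \eqref{eq:embed_linf} you take a genuinely different route: dyadic annuli plus a rescaled $H^d(B_1)\hookrightarrow L^\infty$, whereas the paper integrates $\pa_1\cdots\pa_d\na^l W$ over the quadrant $\{y:\,y_i\geq z_i\}$ and applies Cauchy--Schwarz once, avoiding any covering or rescaling bookkeeping. Your version is standard and works, since on each annulus $|z|\sim R\gtr 1$ the weights $\rho_{l+j}$, $g_{l+j}$, $\mathring{\rho}_{l+j}$ (for $l+j\leq K-1$ in the $\mathring\rho$ case, consistent with your range $l\leq K-d-1$) are bounded below by the relevant pure power of $R$, and are bounded below by constants on the unit ball; but the displayed scaling factor should be $R^{-d+2j}$, not $R^{-d+2(j-l)}$ --- your final combined power $R^{-2l+2\s+\e}$ is the one produced by the correct factor, so this is a slip rather than a conceptual error. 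Finally, to bound the $\|\na^l\Phi\|_{L^\infty}$ part of $\Ga$ in \eqref{eq:EE_embed} you need \eqref{eq:inte:4} in addition to \eqref{eq:inte:3}.
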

{Since $K$ \eqref{K} is absolute, $l, k\leq K$, and the parameters $c_0, c_1$ in the weights \eqref{wg:rho} and norms depend on absolute constants $K, \e$ \eqref{eps}, we only need to track the constants related to $\mu$.}

\begin{proof}

\textbf{(a) Interpolation inequalities.}
To prove \eqref{eq:inte}, we use integration by parts. For \eqref{eq:inte:1}, we compute for $K>k>0$ that
    $$E_k^2=-\sum_i\int(\partial^2_i\nabla^{k-1}W\cdot\nabla^{k-1}W\rho_k+\partial_i\nabla^{k-1}W\cdot\nabla^{k-1}W\partial_i\rho_k)\,.$$
    Notice that the weights \eqref{wg:rho} satisfy 
    $$
    \rho_k^2\lesssim  \rho_{k+1}\rho_{k-1}\,,\quad(\partial_i\rho_k)^2\lesssim \rho_k\rho_{k-1}\,.
    $$
    Combined with a Cauchy-Schwarz inequality, we obtain  
    $$E_k^2\lesssim  E_{k-1}(E_{k}+E_{k+1})\,.
    $$
    {Since $\e$ only depends on $K$ \eqref{eps},}  by a weighted AM-GM inequality, for any $\mu>0$, we have 
    $$E_k^2\leq C(\mu)E^2_{k-1}+\mu E^2_{k+1}\,.
    $$
    From here, to conclude the first inequality, since $\mu>0$ is arbitrary, we only need to that show it holds for $k= l-1$, which we can combine the above estimates for $k = 1, 2, .., l-1$ to establish.

    The proof of \eqref{eq:inte:Hk} follows from the same argument.

    For the second inequality \eqref{eq:inte:2}, we can repeat the same procedure to conclude, provided that the weights $\mathring{\rho}_k$ satisfy the same inequalities for $K>k>1$ \eqref{wg:rho}:
    $$\mathring{\rho}_k^2\lesssim  \mathring{\rho}_{k+1}\mathring{\rho}_{k-1}\,,\quad(\partial_i\mathring{\rho}_k)^2\lesssim  \mathring{\rho}_k\mathring{\rho}_{k-1}\,.$$
    When $k+1< K$, this is obvious. For $k=K-1$, we only need to show  
    $$
    \langle z\rangle^{2K-1-d}\lesssim\mathring{\rho}_{K}\approx U^2
        \langle z\rangle^{-2 \s+2K-\epsilon-d}\,,
    $$
    which is true, since by the choice of $\epsilon, \e_2$ \eqref{eps} and the bootstrap Assumption \ref{asss} we have 
    $$
    U\geq C_b\bar{U}^{1+\epsilon_2}\approx \la z \ra^{\s - \e/2} , \quad
    2  \e < 1.
    $$
    
\noindent{\textbf{(b) Embedding \eqref{eq:embed_linf}.}
 To prove the $L^\infty$ estimates \eqref{eq:embed_linf},  one can proceed as in \cite{HNWarXiv24} and invoke the weighted Morrey-type inequality. Below, we present a simpler proof. By a density argument, we can assume that $W \in C_c^{\infty}$.  
Without loss of generality, we fix $z  \in \Rb^d$ with $z_i \geq 0$ and estimate $\na^l W (z)$. 
Consider the region $\Om(z) = \{ y  \in \Rb^d, y_i \geq z_i \}$. We have $|y| \geq |z|$ for any $y \in \Om(z)$. Denote $\d = - 2 \s - \epsilon - d > -d $ \eqref{eps}.
We have 
\[
\bal
 |\na^l W(Z) | \les_l  \int_{\Om(z)} | \pa_1 \pa_2 .. \pa_d \na^l W (y) | d y
& \les_l || \la y \ra^{ l + d + \d / 2}  \na^{l + d} W ||_{L^2}
 \B( \int_{|y| \geq |z|} \la y \ra^{ - 2 l  -  2 d - \d } d y \B)^{1/2} . \\
 \eal
\]
For $ 0 \leq l \leq K - d - 1$, using $\rho_{ l+ d} \gtr \la y \ra^{ 2 (l+d) + \d}$ \eqref{wg:rho},
$-2 l - d - \d - 1 < -1$, and the first inequality \eqref{eq:inte:1} with $k = l + d < K$, for any $\mu > 0$, we further obtain 
\[
 |\na^l W(y) | \les_K E_{l+d}
 (\int_{ R \geq |y|}  \la R \ra^{-2 l - 2 d - \d } R^{d-1} d R)^{1/2}
\les_K 
E_{l + d}
 \la y \ra^{- l - (d + \d) /2 } .
\]
Rearranging the power on both sides, we prove \eqref{eq:inte:3}.

The proof of \eqref{eq:inte:4} with $ 1 \leq l \leq K- d - 1$ is similar by replacing $W$ by $\na \Phi$  
in the above argument 
and using $\mathring \rho_{ l+ d} \gtr \la y \ra^{ 2 (l+d) + \d_2} $  for $ \d_2 = -d - 1$ 
\eqref{wg:rho} and $ - 2l - d - \d_2 - 1 = - 2 l< -1$ for $l \geq 1$. 
}

\noindent \textbf{(c) Other inequalities.}
For \eqref{eq:prod}, without loss of generality, we assume that $i+ d \leq n$. Using $g_{i+j}^{1/2} \les \la z \ra^i g_j^{1/2}$, $\s + \f{\e}{2} <0$ \eqref{eq:inte:Hk}, we prove 
\[ 
 || \na^i F \na^j G g_{i+j}^{1/2} ||_{L^2}
 \les || F||_{\cH^{i+d}} || \na^j G g_j^{1/2}||_{L^2}
 \les  || F||_{\cH^{n}} || G||_{\cH^m}.
\]
The inequalities \eqref{eq:EE_embed} follow from \eqref{eq:inte:1}, \eqref{eq:inte:2} and the assumption \eqref{eq:boot_EE}.
\end{proof}

{

Combining Assumption \ref{asss} and the decay estimates \eqref{eq:inte:3}, \eqref{eq:inte:4}, we have the following estimates.

\begin{cor}\label{cor:decay}

Denote $W = \bar U - U$. Under the Assumption \ref{asss}, for any $\al \in [0, 1]$ and any $q \in \Rb$ we have 
\beq\label{eq:U_decay}
\bal
  \la z \ra^{ \s - \e / 2} & \les U \les \la z \ra^{\s + \e / 2}, \\
   |(\bar U + \al W)(z)|^q & \les C(|q|) \la z \ra^{ \s q +|\e q| / 2 } , \\
   || \bar U + \al W ||_{\cH^K}, || U ||_{\cH^K} & \les 1 , 
  \eal
\eeq
where $\s, \e$ are defined in \eqref{eq:dU_cons}. As a result, 
for $i+j \leq K$, we have the following estimates for the weights
\beq\label{eq:comp_wg}
\bal
& \mr \rho_i  \les \rho_i \la z \ra^{\s + \e/2-1/2},  i\leq K-1,\quad 
\mr \rho_K \les \rho_K \la z \ra^{\s + \e/2},  \\
& \rho_{i+j} \les \rho_i \la z \ra^{2j}, \quad \mr \rho_{i+j} \les \mr \rho_i
 \la z \ra^{2j} .
 \eal
\eeq

\end{cor}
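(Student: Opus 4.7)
\smallskip
\noindent\textbf{Proof proposal for Corollary \ref{cor:decay}.} The plan is to derive the pointwise bounds on $U$ first, use them together with the convex combination identity to bound $\bar U + \alpha W$, promote these to bounds on $|\bar U+\alpha W|^q$, and finally verify the $\mathcal H^K$ bound and the weight comparisons by direct computation.

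The upper bound on $U$ will come from the embedding \eqref{eq:inte:3} with $l=0$, which gives $|W(z)|\lesssim \langle z\rangle^{\sigma+\varepsilon/2}\,E_d$, combined with $\bar U(z)\lesssim \langle z\rangle^{\sigma}$ from the explicit formula \eqref{def:Ubar}; here $E_d\lesssim 1$ follows from the bootstrap Assumption~\ref{asss} and the interpolation \eqref{eq:inte:1}, as stated in \eqref{eq:EE_embed}. For the lower bound, I will use the bootstrap hypothesis $U\ge C_b\bar U^{1+\varepsilon_2}$ in \eqref{eq:boot_U}: since $\bar U(z)\approx \langle z\rangle^{\sigma}$, this yields $U\gtrsim \langle z\rangle^{\sigma(1+\varepsilon_2)}$, and a direct arithmetic check using the definitions $\sigma=-2/(p-1)$ and $\varepsilon_2=(p-1)\varepsilon/4$ from \eqref{eq:dU_cons}, \eqref{eps} gives $-\sigma\,\varepsilon_2=\varepsilon/2$, so the exponent equals $\sigma-\varepsilon/2$ exactly, proving the first line of \eqref{eq:U_decay}.

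Next, write $\bar U+\alpha W=(1-\alpha)\bar U+\alpha U$. For the upper bound we simply take moduli and use the previous step. For the lower bound I split on $\alpha\le 1/2$ (use $(1-\alpha)\bar U\ge \tfrac12\bar U\gtrsim\langle z\rangle^{\sigma}\gtrsim \langle z\rangle^{\sigma-\varepsilon/2}$) and $\alpha\ge 1/2$ (use $\alpha U\gtrsim \langle z\rangle^{\sigma-\varepsilon/2}$). This yields $\langle z\rangle^{\sigma-\varepsilon/2}\lesssim \bar U+\alpha W\lesssim \langle z\rangle^{\sigma+\varepsilon/2}$, from which raising to the power $q$ and tracking the sign of $q$ produces the exponent $\sigma q+|\varepsilon q|/2$ uniformly in $\alpha$, giving the second line of \eqref{eq:U_decay}. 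For the $\mathcal H^K$ bound in the third line, I split $U=\bar U+W$ and $\bar U+\alpha W=\bar U+\alpha W$: the $W$-contributions are controlled by $\|W\|_{\dot{\mathcal H}^k}\lesssim E_k$ via \eqref{eq:norm_comp} and \eqref{eq:EE_embed}, while the $\bar U$-contributions reduce to checking that $|\nabla^j\bar U|^2 g_j\lesssim \langle z\rangle^{-\varepsilon-d}$ is integrable for $j=0,K$, which follows since $|\nabla^j\bar U|\lesssim\langle z\rangle^{\sigma-j}$ and $g_j=\langle z\rangle^{-2\sigma-\varepsilon-d+2j}$.

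The weight comparisons in \eqref{eq:comp_wg} are purely algebraic. For $\rho_{i+j}\lesssim\rho_i\langle z\rangle^{2j}$ and $\mathring\rho_{i+j}\lesssim\mathring\rho_i\langle z\rangle^{2j}$ I split into $|z|\le 1$ and $|z|\ge 1$ and, within each region, into the cases determined by the transition points $(d+5)/2$ (for $\rho_k$) and $d/2$ (for $\mathring\rho_k$) in \eqref{wg:rho}; in every case each of the competing monomial factors in $\rho_{i+j}$ exceeds the corresponding one in $\rho_i$ by at most a factor $|z|^{2j}\le\langle z\rangle^{2j}$. The bound $\mathring\rho_K=U^2\rho_K\lesssim \langle z\rangle^{\sigma+\varepsilon/2}\rho_K$ is immediate from the already established $U\lesssim \langle z\rangle^{\sigma+\varepsilon/2}$ and the inequality $2\sigma+\varepsilon\le \sigma+\varepsilon/2$ (valid since $\sigma<0$ and $\varepsilon$ is small). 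For $\mathring\rho_i\lesssim\rho_i\langle z\rangle^{\sigma+\varepsilon/2-1/2}$ with $i\le K-1$, near the origin the exponent of $|z|$ in $\mathring\rho_i/\rho_i$ is $5-\varepsilon>0$ and near infinity it is $2\sigma+\varepsilon-1\le \sigma+\varepsilon/2-1/2$, which matches the claimed bound.

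The most delicate step is the lower bound in the first line of \eqref{eq:U_decay}: it is here that the specific choice $\varepsilon_2=(p-1)\varepsilon/4$ becomes essential so that the a priori bound $U\ge C_b\bar U^{1+\varepsilon_2}$ aligns exactly with the decay rate $\langle z\rangle^{\sigma-\varepsilon/2}$ required by the weighted $\mathcal H^K$ machinery; everything else is bookkeeping on weights and routine use of Proposition \ref{prop:inter}.
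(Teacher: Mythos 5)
Most of your proposal is correct and follows the same route as the paper: the lower bound on $U$ from the bootstrap bound \eqref{eq:boot_U} together with the exact identity $\sigma(1+\epsilon_2)=\sigma-\epsilon/2$, the upper bound from the embedding \eqref{eq:inte:3} with $l=0$ and $E_d\lesssim 1$, the two-sided bound on $\bar U+\alpha W=(1-\alpha)\bar U+\alpha U$ by convexity and its $q$-th power, the $\mathcal{H}^K$ bound by the triangle inequality plus the explicit decay of $\nabla^j\bar U$, and the comparisons $\mathring\rho_i\lesssim\rho_i\langle z\rangle^{\sigma+\epsilon/2-1/2}$ ($i\le K-1$), $\mathring\rho_K\lesssim\rho_K\langle z\rangle^{\sigma+\epsilon/2}$, and $\rho_{i+j}\lesssim\rho_i\langle z\rangle^{2j}$ are handled essentially as in the paper's proof.

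The gap is the endpoint case $i+j=K$, $j\ge 1$, of the claim $\mathring\rho_{i+j}\lesssim\mathring\rho_i\langle z\rangle^{2j}$. You assert that ``in every case'' the comparison reduces to matching monomials across the transition points $d/2$ and $(d+5)/2$, but by \eqref{wg:rho} the top weight is not of that form: $\mathring\rho_K=U^2\rho_K$ is solution-dependent, and this is exactly the case the paper singles out for a separate display. Moreover it is not mere bookkeeping: for $|z|\ge 1$ one has $\rho_K\approx c_1|z|^{2K-2\sigma-\epsilon-d}$, so the two-sided bound in \eqref{eq:U_decay} only yields $\langle z\rangle^{2K-d-2\epsilon}\lesssim\mathring\rho_K\lesssim\langle z\rangle^{2K-d}$ at infinity, whereas for every $1\le i\le K-1$ and $j=K-i$ the right-hand side obeys $\mathring\rho_i\langle z\rangle^{2j}\lesssim\langle z\rangle^{2K-1-d}$ there; since $2\epsilon<1$ this leaves a deficit of a factor $\gtrsim\langle z\rangle^{1-2\epsilon}$, so the endpoint inequality cannot be obtained from the monomial comparison you describe (and indeed the paper's own chain $\langle z\rangle^{2\sigma+\epsilon}\rho_K\lesssim\langle z\rangle^{2K-1-d}$ deserves scrutiny for the same reason). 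To repair your write-up you must treat $i+j=K$ separately: either prove the comparison with the extra factor of $\langle z\rangle$ you actually get from $U\lesssim\langle z\rangle^{\sigma+\epsilon/2}$ and track how that loss is absorbed where the inequality is invoked (the $I_3$ estimate in the $H^K$ viscous bounds), or restrict the second line of \eqref{eq:comp_wg} to $i+j\le K-1$ and rely only on $\mathring\rho_K\lesssim\rho_K\langle z\rangle^{\sigma+\epsilon/2}$ at top order. As written, the blanket claim silently treats $\mathring\rho_K$ as if it were $1+|z|^{2K-1-d}$, which it is not.
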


\begin{proof}

\textbf{(a) Estimate of $U$.} By definition of $\e_2$ \eqref{eps}, we get $- \f{2}{p-1}( 1 +  \e_2)  = - \f{2}{p - 1} -  \f{\e}{2} $. Thus, under Assumption \ref{asss}, we yield 
\[
 U \gtr \bar U^{1 + \e_2} \gtr \la z \ra^{- \f{2}{p-1} - \f{\e}{2} } .
\]


Since $\bar U + \al W = \bar U + \al( U - \bar U) 
= \al U + (1- \al) \bar U$, which is between $U, \bar U$, and $\bar U, U > 0$, using \eqref{eq:inte:3}, $K > d$ \eqref{K} and the above estimate, we prove
\[
\bal
|(\bar U + \al W)| & \les \bar U + U \les (1 + E_{K} + E_0)  \la z \ra^{  \s + \e/2  } ,\quad
|(\bar U + \al W)|^{-1}  \les \min(\bar U, U )^{-1} \les \la z \ra^{ -\s  + \e / 2}. 
\eal
\]

The first estimate with $\al = 1$ implies the upper bound for $U$ in \eqref{eq:U_decay}. Raising the above estimates to $|q|$-th power proves the second estimate in \eqref{eq:U_decay}. 

For the last estimate in \eqref{eq:U_decay}, using triangle inequality, $|\na^i \bar U |
\les \la z \ra^{\s -i }$, and \eqref{eq:norm_comp}, we prove 
\[
 || \bar U + \al W ||_{\cH^K}
 +  || \bar U +  W ||_{\cH^K}
\les || \bar U ||_{\cH^K} + || W ||_{\cH^K} 
\les 1 + E_0 + E_K \les 1.
\]
\textbf{(b) Estimate of weights.}  We consider \eqref{eq:comp_wg}. Using $U  \les \la z \ra^{\s + \e/2}$, clearly, we have 
\[
\bal
& |z| \leq 1: \  \mr \rho_i \les \rho_i \les \rho_i \la z \ra^{ 2 \s + \e - 1} , \\ 
& |z| \geq 1: \ \mr \rho_i \les |z|^{2k-1-d}  \les \rho_i \la z \ra^{ 2 \s + \e-1} ,  i \leq K-1,
\quad \mr \rho_K \les U^2 \rho_K 
\les \rho_K \la z \ra^{ 2 \e + \e}.
\eal
\]
For $(f, \tau) = (\rho, - 2 \s - \e )$ or $( \mr \rho,  -1)$ and $i+j < K$, from the definition of $ f_i$ \eqref{wg:rho}, we have
\[
\bal
 f_{i+j} \les f_i \les f_i \la z \ra^{2j}, \quad |z| \leq 1,
f_{i+j} \approx |z|^{2 i + 2 j + \tau - d} 
\les |z|^{2i + \tau - d} \la z \ra^{2j} \les f_i \la z \ra^{2j}, |z| \geq 1.
 \eal
\]
For $i+j=K$, the above estimate still holds for $f = \rho$. For $ \mr \rho_i$ and $\mr \rho_K$, using $ U \les \la z \ra^{\s + \e/2}$, we obtain 
\[
\mr \rho_K \les \la z \ra^{2 \s + \e  } \rho_K
\les \la z \ra^{2K - 1 - d}
= \la z \ra^{2i-1 - d} \la z \ra^{2j} 
\les \mr \rho_i \la z \ra^{2 j}.
\]
We complete the proof of \eqref{eq:comp_wg}.
\end{proof}

\begin{proposition}\label{prop:Hk}

Suppose that \eqref{eq:boot_EE} holds true. For $ 0 \leq k \leq K, j_1 + j_2\leq k$ and any $\al \in [0, 1]$, denote 
\[
V = \bar U + \al W , \quad I_{(j_1, j_2)} =  \na^{j_1} W_1 \cdot \na^{ j_2} W_2 \na^{k-j_1-j_2} V^{p-2}.
\]
 We have the following product estimates 
\beq\label{eq:Hk_est1}  
|| I_{(j_1, j_2 )} ||_{g_k}  \les || W_1 ||_{ \cH^{ \max{ (j_1, K-1)} } } || W_2 ||_{ \cH^{\max(j_2, K-1)} } , 
\eeq
and 
\begin{subequations}
\begin{align}
  || W_1 W_2 V^{p-2} ||_{\cH^K} & \les ( || W_1 ||_{\cH^{K-1}} || W_2 ||_{\cH^K} 
  +  || W_1 ||_{\cH^{K}} || W_2 ||_{\cH^{K-1}} )  , \label{eq:Hk_est2_Hk} 
   \\
 ||W_1 W_2 V^{p-2} ||_{\rho_0} 
& \les || W_2 ||_{\rho_0} || W_1 ||_{\cH^d} 
\label{eq:Hk_est2_L2} . 
\end{align}
\end{subequations}

Moreover, we have 
\begin{align}
    || \la z \ra^{\s + \e/2}  \na^{l +1} U \na^{m}( U^{-1}) ||_{g_k} & \les || U ||_{\cH^{\max(l+1, m , k)}}, 
       \quad l + m = k \leq K , \label{eq:Hk_estU} \\
       | \na^{l +1} U \cdot \na^{m}( U^{-1}) | 
       & \les \la z \ra^{-l-m}  , \quad l , m \leq K-1 - d, 
       \label{eq:Hk_estU_linf}
 \end{align}
\end{proposition}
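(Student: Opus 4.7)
The plan is to reduce all five estimates to standard multilinear manipulations, combined with the weighted product, interpolation, and embedding estimates from Proposition~\ref{prop:inter} and the pointwise decay bounds from Corollary~\ref{cor:decay}. The principal tools will be (i) the Fa\`a di Bruno formula to expand $\na^s V^{p-2}$ and $\na^s U^{-1}$ into polynomials in lower-order derivatives, (ii) the Leibniz rule for triple products, and (iii) the critical identity $(p-1)\s = -2$, which makes the powers of $\la z \ra$ generated by $V^{p-1}$ cancel exactly against the exponents of $g_k$ and $\rho_k$.

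For \eqref{eq:Hk_est1}, I first expand $\na^{k-j_1-j_2} V^{p-2}$ into a finite sum of monomials $V^{p-2-s} \prod_{i=1}^{s} \na^{\alpha_i} V$ with $\sum_{i} \alpha_i = k - j_1 - j_2$. Since $j_1 + j_2 + \sum_i \alpha_i = k \leq K$ and $K \geq 2d+4$ by \eqref{K}, at most one of the derivative orders $j_1, j_2, \alpha_1, \ldots, \alpha_s$ can exceed $K - d$. I place the factor with the largest derivative order (if one exceeds $K-d$) in $L^2$ against an appropriate fragment of $g_k^{1/2}$, and control every remaining factor in $L^\infty$ via the embedding \eqref{eq:inte:3} applied to $W_1, W_2,$ or to $V = \bar U + \al W$ (which lies in $\cH^K$ uniformly by Corollary~\ref{cor:decay}), together with the pointwise bound $|V|^{p-2-s} \les \la z \ra^{(p-2-s)\s + C\e}$ from \eqref{eq:U_decay}. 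The weight fragments are distributed using the comparison inequalities \eqref{eq:comp_wg}, and the final powers of $\la z \ra$ balance by the identity $(p-1)\s = -2$ and the smallness of $\e$ in \eqref{eps}.

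For \eqref{eq:Hk_est2_Hk}, I apply the trilinear Leibniz rule to $\na^K(W_1 W_2 V^{p-2})$ and invoke \eqref{eq:Hk_est1} with $k = K$ on each summand. Since $j_1 + j_2 \leq K$, at least one of $j_1, j_2$ is at most $K-1$, so the split $\|W_1\|_{\cH^{K-1}}\|W_2\|_{\cH^K}$ or the symmetric one is always available. The $\cH^0$-component of the $\cH^K$-norm on the left is covered by \eqref{eq:Hk_est2_L2}, which is itself a direct H\"older estimate $\|W_1 W_2 V^{p-2}\|_{\rho_0} \leq \|W_1 V^{p-2}\|_{L^\infty}\|W_2\|_{\rho_0}$; here the $L^\infty$ bound $\|W_1 V^{p-2}\|_{L^\infty} \les \|W_1\|_{\cH^d}$ follows from the embedding \eqref{eq:inte:3} applied to $W_1$, the decay $|V|^{p-2} \les \la z \ra^{(p-2)\s + C\e}$ from \eqref{eq:U_decay}, and again the identity $(p-1)\s = -2$ (which makes $\la z \ra^{\s + (p-2)\s + C\e} = \la z \ra^{-2 + C\e}$ uniformly bounded for $\e$ small). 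For \eqref{eq:Hk_estU} and \eqref{eq:Hk_estU_linf}, the argument is identical, with $V^{p-2}$ replaced by $U^{-1}$ and the Fa\`a di Bruno expansion now reading $\na^m(U^{-1}) = \sum c_{\alpha,s} U^{-1-s} \prod_{i=1}^{s} \na^{\alpha_i} U$. The lower bound $U \gtr \la z \ra^{\s - \e/2}$ from \eqref{eq:U_decay} is crucial to control the negative powers $U^{-1-s}$ pointwise, and the prefactor $\la z \ra^{\s + \e/2}$ on the left of \eqref{eq:Hk_estU} is tailored exactly to absorb the decay of $\na^{l+1} U$ so that $g_k$ distributes evenly across all factors; \eqref{eq:Hk_estU_linf} is then immediate since $l, m \leq K-1-d$ places every derivative order in the range where \eqref{eq:inte:3} gives direct pointwise control.

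The main obstacle will be the careful bookkeeping of $\la z \ra$ exponents when $p$ is a general real number greater than $1$: the Fa\`a di Bruno expansion of $V^{p-2}$ (resp. $U^{-1}$) produces arbitrarily many factors of $V^{-r}$ (resp. $U^{-1-r}$), each contributing an error of order $\e$ in the pointwise bound from \eqref{eq:U_decay}. One must verify that this cumulative $\e$-error is controlled by the smallness of $\e$ in \eqref{eps} and by the critical identity $(p-1)\s = -2$, so that the final exponent of $\la z \ra$ is no worse than what the target weight $g_k$ (resp.\ $\rho_0$) tolerates. Once this bookkeeping is set up once and for all, the rest of the proof is routine multilinear calculus in weighted Sobolev spaces.
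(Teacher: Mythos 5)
Your proposal follows essentially the same route as the paper's proof: expand $\na^{k-j_1-j_2}V^{p-2}$ (and $\na^m(U^{-1})$) into monomials $V^{p-q}\prod\na^{j_l}V$, put the factor with the largest derivative order in weighted $L^2$ and all remaining factors in $L^\infty$ via \eqref{eq:inte:3} and Corollary \ref{cor:decay}, and close the exponent bookkeeping using $(p-1)\s=-2$ together with the smallness of $\e$ in \eqref{eps}; the treatment of \eqref{eq:Hk_est2_Hk}, \eqref{eq:Hk_estU}, \eqref{eq:Hk_estU_linf} matches the paper's as well. One small correction: the $\dot\cH^0$ component of the left-hand side of \eqref{eq:Hk_est2_Hk} should be obtained from \eqref{eq:Hk_est1} with $k=0$ rather than from \eqref{eq:Hk_est2_L2}, since the right-hand side of \eqref{eq:Hk_est2_L2} involves $\|W_2\|_{\rho_0}$ with the weight $\rho_0$ singular at the origin, which is not controlled by the $\cH$ norms appearing in \eqref{eq:Hk_est2_Hk}.
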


\begin{proof}
A direct computation yields
\beq\label{eq:Hk_est_pf1}
\bal
I_{j_1, j_2}
 \leq \sum_{2 \leq q \leq k+2,} \sum_{ \sum_{l=1}^q j_l = k, j_l \geq 0} I_{\vec j, q}, \quad 
 I_{\vec j, q} = |V|^{p-q}  \cdot |\na^{j_1} W_1 | |\na^{j_2} W_2|  \prod_{l=3}^q  |\na^{j_l} V| .
 \eal
\eeq

For a fixed $(j, q)$, we denote 
\[
J_1 =  W_1, \quad J_2 =  W_2,
\quad J_l =  V, \quad l\geq 3,
\quad i = \arg\max_{l\leq q} j_l.
\]
If there are more than one indices $a$ with $ j_a =  \arg\max_{l\leq q} j_l$, we just pick one of them.
Clearly, we have $j_l \leq k/2, l \neq  i$ \eqref{K}.
By Proposition \ref{prop:inter}  \eqref{eq:inte:3}, we have
\[
\bal
|\na^{j_l} J_l | \les \la z \ra^{ - j_l + \s + \e/2}  || J_l ||_{\cH^{j_l + d}}
\eal
\]
Applying the above $L^{\infty}$ estimates to $J_l, l\neq i$, and Corollary \ref{cor:decay} for $
V = \bar U + \al W$, we obtain 
\beq\label{eq:Hk_est_pf2}
I_{\vec j, q} \les |\na^{j_i} J_i|  \prod_{l\neq i} \la z \ra^{ - j_l +  \s +  \e / 2}  || J_l ||_{\cH^{j_l + d}}  \la z \ra^{(p-q) \s + |p-q| \e / 2}. 
\eeq
Combining the exponents of the $\la z \ra^{\cdot}$ terms, and using the definitions of $\s = -\f{2}{p-q}$ \eqref{eq:dU_cons}  and $\e$ \eqref{eps}, we yield
\beq\label{eq:Hk_est_pf3}
\bal
\xi & = \sum_{ 1 \leq l \neq i \leq q} ( - j_l + \s +  \e  / 2) + (p-q) \s + |p-q| \e / 2 \\
& = - (k - j_i) + (p-q + q-1) \s + (p+q + q-1) \e / 2
= -(k - j_i) - 2 + ( p + K ) \e <  -(k - j_i).
\eal
\eeq
Since $\rho_k^{1/2} \la z \ra^{ - (k-j_i)} \les \rho_{k-j_i}$ \eqref{wg:rho}, applying weighted $L^2$ bound to $\na^{j_i } J_i$, we further obtain 
\[
I_{\vec j ,q} \les || J_i ||_{\cH^{j_i}} \prod_{l\neq i} || J_l ||_{\cH^{j_l+d}} .
\]
Since $j_l + d\leq k /2 + d \leq K-1 $ for $l \neq i, k \leq K$ , $j_i \leq k$, and $ || V ||_{\cH^K} \les 1 + E_0 + E_K$, we obtain $ j_l + d \leq \max( j_l, K-1)$ and thus 
\[
I_{\vec j ,q} \les || W_1 ||_{ \cH^{\max(j_1, K-1)}} 
|| W_2 ||_{ \cH^{\max(j_2, K-1)}} . 
\]
Using $\max(p-q,0) + q \leq p + K + q$ and summing the estimates of $I_{\vec j, q}$, we conclude the proof of \eqref{eq:Hk_est1}. 

The estimate \eqref{eq:Hk_est2_Hk} follows from summing the estimates of $I_{(j_1, j_2)}$
\eqref{eq:Hk_est1} over $(j_1, j_2) $ with $j_1 + j_2 \leq K$, and using the fact that we have $j_1 \leq K-1$ or $j_2 \leq K-1$. The estimate \eqref{eq:Hk_est2_L2} follows from applying $L^2(\rho_0)$ estimate to $W_2$ and $L^{\infty}$ estimate to $W_1, V$ similar to the above.

For the last estimate \eqref{eq:Hk_estU}, we note that $U = \bar U + W$. Applying the Leibniz rule, we obtain 
\[
|\na^{l+1} U  \na^{m} (U^{-1} ) )| 
\les   \sum_{ 1 \leq q \leq k+1,} \sum_{ \sum_{l=1}^q j_l = k+1 , j_1 \geq 1, j_l \geq 0} T_{\vec j, q}, \quad 
 T_{\vec j, q} = |U|^{-q}  \cdot    \prod_{l=1}^q  |\na^{j_l} U| .
\]
Denote $i = \arg\max_l j_l$. Applying the above estimates of $I_{\vec j, q}$ with $(J_1,.. ,J_l), V, p-q$ replaced by 
$ (U, U,.., U), U, -q$ \eqref{eq:Hk_est_pf1}-\eqref{eq:Hk_est_pf3}, and using $\max(-q, 0) = 0, q \leq K+1 $, we obtain 
\[
|\la z \ra^{\s + \e/ 2} T_{\vec j, q} |
\les |\na^{j_i} U| \la z \ra^{\xi} \prod_{l\neq i} || U ||_{\cH^{j_l + d}}, 
\]
where 
\[
\bal
\xi & =\s + \f{\e}{2} +  \sum_{ 1 \leq l \neq i \leq q} ( - j_l + \s +  \f{\e}{2} ) + (-q) \s + |-q| \f{\e}{2}\\
& = - (k + 1 - j_i) + (-q + q) \s + (q + q) \e / 2
= -(k - j_i) - 1 + ( 1 + K ) \e <  -(k - j_i).
\eal
\]
Using $j_l + d \leq K/2 + d < K, || U||_{\cH^{j_l + d}}  \les || U||_{\cH^K} $ for $l \neq i$, 
$\rho_k^{1/2} \les \rho_{j_i}^{1/2} \la z \ra^{k-j_i}, q, j_i  \leq k+1$, and applying weighted $L^2$ estimate to $\na^{j_i} J_i$, we establish
\[
 ||\la z \ra^{\s + \e/ 2} T_{\vec j, q} ||_{g_k}
\les || U ||_{\cH^{j_i}} (1 + || U ||_{\cH^K})^{q-1}
\les || U ||_{\cH^{k+1}}. 
\]
Combining the estimates for $ T_{\vec j, q} $ with different $\vec j , q$, we  conclude the proof of \eqref{eq:Hk_estU}.

For \eqref{eq:Hk_estU_linf}, denote $k = l + m$. Applying $L^{\infty}$ estimate to each term $\na^{j_l}U$ in $T_{\vec j, q}$ and noting that $j_l \leq \max(l+1, m) \leq K-d$, we prove
\[
T_{j, q} \les \la z \ra^{\xi} 
\les \la z \ra^{-k - 1/2} ,
\]
where we have used 
\[
\xi = \sum_{ 1 \leq l  \leq q} ( - j_l + \s +  \f{\e}{2} ) + (-q) \s + |-q| \f{\e}{2}
= - (k+1) + q \e < - k - 1/2.
\]
We complete the proof.
\end{proof}

}

\subsection{$L^2$ stability analysis of the amplitude}\label{l2sec}

{In this section, we estimate the weighted $L^2$ energy $E_0^2=(W,W\rho_0)\,$ \eqref{norm:Ek}. In the following energy estimates, without specification, we will assume that the bootstrap assumption \ref{asss} holds true. We will show that the following lemma holds.

}
\begin{lemma}[Weighted $L^2$ estimate]\label{lem:l2}
Under the bootstrap assumption \ref{asss}, it holds
\begin{equation}
    \label{l2-collecy}
        \frac{1}{2} {  \f{d}{d \tau } } E_0^2\leq (-\frac{\epsilon}{8}+ C \cE_1)E_0^2+ C \cE_2
        E_0\,.
\end{equation}
or some absolute constant $C>0$. 
\end{lemma}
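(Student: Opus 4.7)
The plan is to differentiate $E_0^2 = (W, W\rho_0)$ in $\tau$ and bound the four contributions from $W_\tau = \mathscr{L}_{\bar U} W + \mathscr{F}_U + \mathscr{N}_U + \mathscr{D}_U$ in \eqref{realp}, grouping the residue and viscous parts together so as to exploit a cancellation at the origin that is built into the normalization.

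For the linear contribution $(\mathscr{L}_{\bar U} W, W\rho_0)$, I would integrate the transport term $-\tfrac{1}{2}z\cdot\nabla W$ by parts and write the quadratic form as $\int \mathcal{D}(z)\,W^2 \rho_0\,dz$ with
\[
  \mathcal{D}(z) = -\tfrac{1}{p-1} + p\bar U^{p-1}(z) + \tfrac{d}{4} + \tfrac{1}{4}\tfrac{z\cdot\nabla\rho_0}{\rho_0}.
\]
Using $\bar U^{p-1}(0) = 1/(p-1)$, $\bar U^{p-1}(z)\to 0$ as $|z|\to\infty$, the exponents $-6+\epsilon-d$ and $-2\sigma-\epsilon-d$ appearing in $\rho_0$ \eqref{wg:rho}, and the identity $-2\sigma = 4/(p-1)$, a direct calculation gives $\mathcal{D}(0) = (\epsilon-2)/4$ and $\mathcal{D}(\infty) = -\epsilon/4$. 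The intermediate range is covered by choosing the constant $c_0$ in $\rho_0$ so that the transition between the two singular exponents occurs where $p\bar U^{p-1}$ has already decayed sufficiently; this yields $\mathcal{D}(z)\le -\epsilon/8$ uniformly and hence $(\mathscr{L}_{\bar U} W, W\rho_0)\le -\tfrac{\epsilon}{8} E_0^2$.

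For the nonlinearity I would use the integral representation $\mathscr{N}_U = \tfrac{p(p-1)}{2}\int_0^1 (1-\alpha)(\bar U + \alpha W)^{p-2}\,d\alpha \cdot W^2$, apply the weighted product inequality \eqref{eq:Hk_est2_L2} with $W_1 = W_2 = W$ and $V = \bar U + \alpha W$, and then Cauchy--Schwarz, obtaining $(\mathscr{N}_U, W\rho_0)\lesssim E_0^2 \cdot \|W\|_{\mathcal{H}^d}$. Since $d \le K-1$ by \eqref{K}, the interpolation \eqref{eq:inte:1} bounds $\|W\|_{\mathcal{H}^d}$ by a constant times $\mathcal{E}_1$, so this contribution is absorbed into $C\mathcal{E}_1 E_0^2$.

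The main obstacle is the residue--viscous contribution. A naive Cauchy--Schwarz on $(\mathscr{F}_U, W\rho_0)$ fails: individual terms such as $(c_W \bar U, W\rho_0)$ are formally divergent near the origin, because $\rho_0$ carries the singular factor $|z|^{-6+\epsilon-d}$ while $\bar U(0)\ne 0$, and the same issue affects the $\bar U, \bar\Theta$-only pieces of $\mathscr{D}_U$ \eqref{real_v}. However, the normalization ODEs \eqref{cq}, \eqref{ode:v} that determine $c_W, V, P$ are precisely designed so that $\nabla^k(\mathscr{F}_U + \mathscr{D}_U)(0) = 0$ for $k=0,1,2$, matching the vanishing of $\nabla^k W_\tau(0) = 0$ and $\nabla^k(\mathscr{L}_{\bar U} W)(0) = 0$ forced by $\nabla^k W(0) = 0$. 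I would therefore group $\mathscr{F}_U + \mathscr{D}_U$ before integrating, substitute the leading expressions for $c_W, V, P$ from Lemma~\ref{lem1}, expand the $\bar U, \bar\Theta$-dependent pieces of $\mathscr{D}_U$ near the origin, and use the pointwise bounds of Corollary~\ref{cor:decay} and the product estimates of Proposition~\ref{prop:Hk} to obtain
\[
  \|\mathscr{F}_U + \mathscr{D}_U\|_{\rho_0}\lesssim \mathcal{E}_2 + \mathcal{E}_1 E_0,
\]
the $\mathcal{E}_2$ term capturing the $|\mathcal{Q}|$- and $H^{p-1}$-sized residue and $\mathcal{E}_1 E_0$ the corrections depending on the perturbation $(W, \Phi)$. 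Cauchy--Schwarz then delivers $(\mathscr{F}_U + \mathscr{D}_U, W\rho_0)\lesssim \mathcal{E}_2 E_0 + \mathcal{E}_1 E_0^2$, and summing the three contributions yields \eqref{l2-collecy}. The hardest step is checking that the third-order vanishing of $\mathscr{F}_U + \mathscr{D}_U$ at the origin enforced by the normalization really compensates the $|z|^{-6+\epsilon-d}$ singularity of $\rho_0$ uniformly in $z$; this rests on carefully matching the Taylor coefficients of $\mathscr{D}_U$ \eqref{real_v} at $z=0$ against the expressions for $c_W, V, P$ derived in Section~\ref{sec:ode}.
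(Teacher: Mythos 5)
Your proposal is correct and follows essentially the same route as the paper: extracting the linear damping $-\tfrac{\epsilon}{8}$ from the weight $\rho_0$ via an AM--GM choice of $c_0$, treating $\mathscr{N}_U$ through the Newton--Leibniz representation together with the product estimate \eqref{eq:Hk_est2_L2}, and, crucially, grouping $\mathscr{F}_U+\mathscr{D}_U$ so that the third-order vanishing at the origin enforced by the normalization \eqref{ode:v} compensates the singular factor in $\rho_0$ (the paper implements this via the cutoff/``tilde'' decomposition \eqref{reor} and a near/far-field splitting, which is exactly the mechanism you describe). The only differences are cosmetic: an immaterial constant in the Taylor remainder for $\mathscr{N}_U$ and a slightly less explicit bookkeeping of the residue--viscous terms, neither of which affects the argument.
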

\begin{proof}
Notice that $W$ vanishes at the origin to the third order so this choice of singular weight induces a well-defined energy. We  have by \eqref{realp} that 
$$\frac{1}{2} { \f{d}{d \tau}} E_0^2=(\mathscr{L}_{\bar{U}}W,W\rho_0)+(\mathscr{N}_U,W\rho_0)+(\mathscr{F}_U+\mathscr{D}_U,W\rho_0)\,.$$

For the leading order linear term, we have via integration by parts 
 that $$(\mathscr{L}_{\bar{U}}W,W\rho_0)=(d_0 W,W\rho_0)\,,$$
where we calculate the damping 
$$\begin{aligned}
    d_0&\coloneqq c_U+\frac{1}{2\rho_0}\nabla\cdot(d_{\bar{U}}z\rho_0)+p\bar{U}^{p-1}=-\frac{1}{p-1}+p\bar{U}^{p-1}+\frac{1}{4\rho_0}\nabla\cdot(z\rho_0)
    \\&=-\frac{1}{p-1}+\frac{p}{p-1+c_p|z|^2}+\frac{(-6+\epsilon)|z|^{-6+\epsilon}+(\frac{4}{p-1}-\epsilon)c_0|z|^{\frac{4}{p-1}-\epsilon}}{4(|z|^{-6+\epsilon}+c_0|z|^{\frac{4}{p-1}-\epsilon})}\leq-\frac{\epsilon}{8}\,.
\end{aligned}$$
 The last inequality amounts to $$\begin{aligned}&4p(|z|^{-6+\epsilon}+c_0|z|^{\frac{4}{p-1}-\epsilon})+((-6-\frac{4}{p-1}+\frac{3\epsilon}{2})|z|^{-6+\epsilon}-\frac{\epsilon}{2}c_0|z|^{\frac{4}{p-1}-\epsilon})(p-1+c_p|z|^2)\\&\leq-(p-1)|z|^{-6+\epsilon}-\frac{\epsilon}{2}c_0c_p|z|^{2+\frac{4}{p-1}-\epsilon}+4pc_0|z|^{\frac{4}{p-1}-\epsilon}
\leq0\,.\end{aligned}$$ 
The last inequality is implied by a weighted AM-GM inequality provided that \begin{equation}\label{c0}
    \Big(\frac{\epsilon c_0c_p}{2(6+\frac{4}{p-1}-2\epsilon)}\Big)^{6+\frac{4}{p-1}-2\epsilon}(\frac{p-1}{2})^2\geq \Big(\frac{4pc_0}{8+\frac{4}{p-1}-2\epsilon}\Big)^{8+\frac{4}{p-1}-2\epsilon}\,.
\end{equation}
Notice that $\epsilon\leq 1/2$ is fixed to be small. We can choose a sufficiently small constant $c_0 > 0$ such that we can conclude the linear estimate\begin{equation}
    \label{damp-l2}\big(\mathscr{L}_{\bar{U}}W,W\rho_0\big)\leq-\frac{\epsilon}{8}E_0^2\,.
\end{equation} 

The nonlinear estimate is more subtle due to the general nonlinearity $p$. We use Taylor's expansion or Newton-Leibniz's formula twice to derive 
\beq\label{eq:Newton}
\mathscr{N}_U=W^2p(p-1)\int_0^1(1-\alpha)(\bar{U}+\alpha W)^{p-2}d\alpha\,.
\eeq
{Using Proposition \ref{prop:Hk} \eqref{eq:Hk_est2_L2} with $(W_1, W_2) = (W, W)$ and $\cE_1$ defined in \eqref{eq:lot}, we obtain 
\[
 || W^2 (\bar{U}+\alpha W)^{p-2} ||_{\rho_0}
 \les || W||_{\cH^d} || W ||_{\rho_0}
\les (E_0 + E_d) E_0
\les \cE_1 E_0,
\]
which implies 
}



\begin{equation}
    \label{non-l2}
    | (\mathscr{N}_U,W\rho_0) | \leq\|\mathscr{N}_U\|_{\rho_0}E_0 
    \les \cE_1 || W ||_{\rho_0} E_0 \les 
        \cE_1 E_0^2\,.
\end{equation} 

Finally, we estimate the viscous and residue terms together. We group the terms to make them integrable. Consider a fixed 1D smooth cutoff function $\chi$ such that it equals $1$ in $[-1,1]$ and $0$ outside of $[-2,2]$. We use the notation $\widetilde{f}$ to denote functions only differing from $f$ near the origin, where they equal the residue of $f$ when expanded until its second-order Taylor's expansion at the origin, via the cutoff function $\chi$.  For illustrative purposes, we will explicitly write down   $\widetilde{f}$ by the expansions 
$$f=(f(0)+z^T\nabla f(0)+\frac{1}{2}z^T\nabla^2f(0)z)\chi(|z|)+\widetilde{f}\,,$$
where $\widetilde{f}$ vanishes to the third order at the origin.  By the choice of 
{the modulation parameters} in \eqref{ode:v}, it's easy to see that\footnote{Note that $\tilde U$ does not denote the perturbation.} 
\begin{equation} \label{reor}\mathscr{F}_U+\mathscr{D}_U=c_W\widetilde{U}-\cP z\cdot\nabla\widetilde{U}-v\cdot\widetilde{\nabla{U}}-H^{p-1}\gamma\widetilde{U}+\widetilde{\mathscr{D}_U}\coloneqq\widetilde{\mathscr{F}_U}+\widetilde{\mathscr{D}_U}\,.
\end{equation}
Each of the terms in $\widetilde{\mathscr{F}_U}$ vanishes to the third order at the origin. Notice that $\rho_k=\rho_0 |z|^{2k}$, for $k=1,2,3$. Recall the definition of $\cE_1, \cE_2$ from \eqref{eq:lot}. By Lemma \ref{lem1}, we have that 
$$
\|\widetilde{\mathscr{F}_U}\|_{\rho_0}\les {\cE_2} (1+E_0+E_1+\|\widetilde{\nabla{W}}\|_{\rho_0})\,.
$$
We can decompose the integral region into the near field $I=[0,1]^d$ and the rest of the outer region $I^c$ to estimate 
$$\|\widetilde{\nabla{W}}\|_{\rho_0}\lesssim(\int_{z\in I}|z|^{\epsilon-d})^{1/2}\sup_{z\in I}|\widetilde{\nabla{W}}/|z|^3|+\|\widetilde{\nabla{W}}|z|\|_{\rho_0}\les   \Ga +E_1\,.
$$
Combined with  Proposition \ref{prop:inter}, we have the residue estimate
\begin{equation}
    \label{res-l2}
      { | (\widetilde{\mathscr{F}_U},W\rho_0) | \les \cE_2 E_0\,.}
\end{equation} 

For the viscous term, we notice as in \eqref{du_exp}, we can write $$\widetilde{\mathscr{D}_U}=\text{tr}(\Qc\widetilde{S_1})\,,\quad S_1=S_{11}+S_{12}+S_{13}+S_{14}\,.$$
We estimate the four terms respectively. We compute
$$ 
 \|\widetilde{S_{11}}\|_{\rho_0}=\|\widetilde{\nabla^2U}\|_{\rho_0}\les 1 +\|\widetilde{\nabla^2W}\|_{\rho_0}\les 1+ \Ga +E_2 \,,
$$
where in the last inequality we use again the decomposition of the integral into the near and far fields.
For the remaining three viscous terms, we estimate similarly as follows:
{
\[
\bal
 \|\widetilde{S_{12}}\|_{\rho_0}&=2|\beta|\|\widetilde{\nabla U\nabla \Theta^T}\|_{\rho_0}\ \les  2|\beta|\|\widetilde{\nabla W}\widetilde{\nabla \Phi^T}\|_{\rho_0}+ (1+ \Ga)^2 \les  \Ga( \Ga +E_1) + (1+ \Ga )^2\,, \\
  \|\widetilde{S_{13}}\|_{\rho_0}&=\|\widetilde{U\nabla \Theta\nabla \Theta^T}\|_{\rho_0} \les \|\widetilde{W}\widetilde{\nabla \Phi}\widetilde{\nabla \Phi^T}\|_{\rho_0}+ (1+ \Ga)^3\les \Ga^2(\Ga+E_0) +(1+ \Ga)^3 \,, \\
   \|\widetilde{S_{14}}\|_{\rho_0}&=\|\widetilde{U\nabla^2\Theta}\|_{\rho_0} \les 
\|\widetilde{W}\widetilde{\nabla^2\Phi}\|_{\rho_0}+ (1+ \Ga )^2 \les \Ga(\Ga+E_0) +(1+\Ga)^2 .
\eal
\]
}

We can collect the viscous estimate by Proposition \ref{prop:inter} and Assumption \ref{asss}:
\begin{equation}
    \label{vis-l2} { | (\widetilde{\mathscr{D}_U},W\rho_0) | \les \cE_2} (1+E_0+E_1+E_2 + \Ga)^3 E_0\les {\cE_2} (1+E_0+E_K)^3E_0 \les \cE_2 E_0\,.
\end{equation} 
We thereby conclude the proof of Lemma \ref{lem:l2} using \eqref{damp-l2}, \eqref{non-l2}, \eqref{res-l2}, and \eqref{vis-l2}.  
\end{proof}
One sees that we already have leading order damping in the $L^2$ estimates. However, to close the nonlinear estimates, we will need higher order estimates to control the $L^\infty$ norms. 
\subsection{$H^1$ stability analysis of the phase}\label{h1sec}

We consider the weighted $H^1$ norm of the phase $F_1^2=(\nabla\Phi,\nabla\Phi\mathring{\rho}_1)\,$ \eqref{norm:Ek}.  
We choose this norm since $\Phi$ does not decay at the origin. We will show that the following lemma holds. 
\begin{lemma}[Weighted $H^1$ estimate] \label{lem:h1}
    Under the bootstrap assumption \ref{asss}, it holds 
    \begin{equation}    
    \label{h1-collecy}
            \frac{1}{2} \f{d}{d \tau} F_1^2\leq - \frac{1}{8}F_1^2+ { C ( E_{K-1}+E_0)^2
          } + {C \cE_2} F_1\,,   
\end{equation}
for some absolute constant $C>0$.
\end{lemma}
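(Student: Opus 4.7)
The plan is to apply $\nabla$ to the phase equation \eqref{imaginep}, then run a weighted $L^2$ energy estimate on $\nabla\Phi$ with the weight $\mathring\rho_1$, parallel to the strategy used for the amplitude in Lemma \ref{lem:l2} but without any vanishing conditions since $\Phi$ does not vanish at the origin. Using the identity $\nabla(\Lambda\Phi)=\nabla\Phi+\Lambda\nabla\Phi$, one obtains
$$
\partial_\tau\nabla\Phi=-\tfrac{1}{2}\nabla\Phi-\tfrac{1}{2}\Lambda\nabla\Phi+\nabla(\mathscr{F}_\Theta+\mathscr{N}_\Theta+\mathscr{D}_\Theta).
$$
Pairing with $\nabla\Phi\,\mathring\rho_1$ gives $\tfrac12\partial_\tau F_1^2$, and the transport term is handled by integration by parts using $-\tfrac12(\Lambda\nabla\Phi,\nabla\Phi\mathring\rho_1)=\tfrac14(|\nabla\Phi|^2,\nabla\!\cdot\!(z\mathring\rho_1))$. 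A direct computation on $\mathring\rho_1\sim|z|^{1-d}$ (or the constant case $d=1$) gives $\nabla\!\cdot\!(z\mathring\rho_1)=\mathring\rho_1$ in all dimensions, so the combined bare linear damping is $-\tfrac14 F_1^2$. The extra factor of $1/2$ between this and the stated $-\tfrac18 F_1^2$ is the budget needed to absorb the nonlinear contribution by Cauchy--Schwarz below.

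For the source terms, I would write the Newton--Leibniz representation
$\mathscr{N}_\Theta=\delta(p-1)W\int_0^1(\bar U+\alpha W)^{p-2}d\alpha$, differentiate, and apply the product-type estimate \eqref{eq:Hk_est1} from Proposition \ref{prop:Hk} with $k=1$ (comparing $\mathring\rho_1$ to the weight $g_1$ after a near/far-field split and invoking the bootstrap bound $U\gtrsim\langle z\rangle^{\sigma-\varepsilon/2}$ from Corollary \ref{cor:decay}) to obtain $\|\nabla\mathscr{N}_\Theta\|_{\mathring\rho_1}\lesssim E_0+E_{K-1}$. A weighted Cauchy--Schwarz then turns this into
$|(\nabla\mathscr{N}_\Theta,\nabla\Phi\mathring\rho_1)|\le C(E_0+E_{K-1})^2+\tfrac18 F_1^2$,
the second piece being absorbed into the damping. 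For the residue term $\nabla\mathscr{F}_\Theta=\Pc\nabla\Theta+(\Pc z+\Vc)\cdot\nabla^2\Theta$ and the viscous term $\nabla\mathscr{D}_\Theta$ (which is linear in $\Qc$), one uses Lemma \ref{lem1} to factor out $\cE_2=|\Qc|+H^{p-1}$, bounds the remaining $\Theta$-derivatives and $U^{-1}$ factors via \eqref{eq:Hk_estU}, \eqref{eq:Hk_estU_linf} and Corollary \ref{cor:decay}, and uses the bootstrap assumption \eqref{eq:boot_EE} to crudely dominate the residual energy polynomials by $1$. Cauchy--Schwarz finally converts these into $C\cE_2 F_1$, completing \eqref{h1-collecy}.

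The main obstacle is controlling $\nabla\mathscr{N}_\Theta$ in the $\mathring\rho_1$-norm. Because $\mathscr{N}_\Theta$ is only linear in $W$ at leading order (the quadratic cancellation exploited for $\mathscr{N}_U$ in \eqref{non-l2} is not available), one must accept a bound that is merely linear in $W$-type norms and then quadratize by Cauchy--Schwarz, which is why the RHS of \eqref{h1-collecy} carries $(E_{K-1}+E_0)^2$ rather than a product of $F_1$ with a small factor. The second delicate point is consistency of the weight $\mathring\rho_1=|z|^{1-d}$ with the $p$-dependent decay of $\bar U^{p-2}$ and $\nabla\bar U$ near infinity; since $\bar U^{p-2}\sim|z|^{-2(p-2)/(p-1)}$, one must verify that the weight $\mathring\rho_1$ and the profile weight $\rho_0$ are compatible enough that $\|\bar U^{p-2}\,W\|_{\mathring\rho_1}\lesssim E_0$, which ultimately relies on the choice of $\epsilon$ in \eqref{eps} and the sharp decay rates built into Proposition \ref{prop:inter}.
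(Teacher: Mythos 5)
Your proposal is correct and follows essentially the same route as the paper's proof: the exact $-\tfrac14 F_1^2$ linear damping from integration by parts with $\nabla\cdot(z\mathring\rho_1)=\mathring\rho_1$, the Newton--Leibniz representation of $\mathscr{N}_\Theta$ estimated via Proposition \ref{prop:Hk} after comparing $\mathring\rho_1$ with $g_1$, an AM--GM step absorbing $\tfrac18 F_1^2$, and the residue/viscous terms factored through $\cE_2$ using Lemma \ref{lem1}, \eqref{eq:Hk_estU}, \eqref{eq:Hk_estU_linf} and the bootstrap bounds. The delicate points you flag (the merely linear-in-$W$ structure of $\mathscr{N}_\Theta$ forcing the $(E_0+E_{K-1})^2$ term, and the weight compatibility of $\mathring\rho_1$ with $\bar U^{p-2}$) are exactly the ones the paper's argument addresses.
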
\begin{proof}
    We  have by \eqref{imaginep} that 
$$\frac{1}{2} { \f{d}{d \tau} }  F_1^2=(\nabla(- \f{1}{2} \Lambda \Phi)+\nabla\mathscr{N}_\Theta+\nabla\mathscr{F}_\Theta+\nabla\mathscr{D}_\Theta,\nabla\Phi\mathring{\rho}_1)\,.$$

For the leading order linear term, we have via integration by parts
 that 
 $$(\partial_i(- \f{1}{2} \Lambda \Phi),\partial_i\Phi\mathring{\rho}_1)=- \f{1}{4}(\partial_i\Phi,\partial_i\Phi\mathring{\rho}_1)\,.
 $$
 Therefore we have the linear estimate\begin{equation}
\label{lin-theta}
(\nabla(- \f{1}{2} \Lambda \Phi),\nabla\Phi\mathring{\rho}_1)=-\frac{1}{4}F_1^2\,.
 \end{equation}
 
 For the nonlinear estimate, we again use Newton-Leibniz's formula to get 
 \beq\label{eq:newton_th}
 \mathscr{N}_\Theta= 
\d ( ( \bar U + W)^{p-1 } - \bar U^{p-1} )  = \delta(p-1)W\int_0^1(\bar{U}+\alpha W)^{p-2}d\alpha\,.
 \eeq
{It is not difficult to see that $ \la z \ra^{\s  + \f{\e-1}{2}} \in \cH^i$ for any $i\geq 0$ since $\e-1 < 0$. Note that $\mr \rho_1$ \eqref{wg:rho} is locally integrable and $\mr \rho_1
\les \la z \ra^{\s  + \f{\e-1}{2}} g_1$. Applying $L^{\infty}$ estimate for $W, \bar U + \al W$ from Corollary \ref{cor:decay}, \eqref{eq:inte:3} in Proposition \ref{prop:inter},  and Proposition \ref{prop:Hk} \eqref{eq:Hk_est1} with $W_1 =\la z \ra^{\s  + \f{\e-1}{2}} , W_2 = W, j_1 = 0, j_2 = 1, k=1$, we obtain 
\[
\bal
|| \nabla\mathscr{N}_\Theta ||_{\mr \rho_1}
& \les || \nabla\mathscr{N}_\Theta  ||_{L^{\infty}(|z|\leq 1)}
+ || W_1 \nabla\mathscr{N}_\Theta ||_{ g_1}  
  \les  E_0 + E_{K-1} + || W_1 \nabla\mathscr{N}_\Theta ||_{  \rho_1}  \\
 & \les  E_0 + E_{K-1}  + || W_2 ||_{\cH^{K-1}} 
 \les E_0 + E_{K-1}  .
 \eal
\]
}
We can collect the nonlinear estimate, via an AM-GM inequality as follows:\begin{equation} \label{non-theta}
|(\nabla\mathscr{N}_\Theta,\nabla\Phi\mathring{\rho}_1) | \leq C \|\nabla\mathscr{N}_\Theta\|_{\mathring{\rho}_1}F_1\leq C (  E_0+ E_{K-1} )^2 +\frac{1}{8}F^2_1\,.\end{equation}

For the residue estimate, we have 
\begin{equation}
\label{res-theta}
| (\nabla\mathscr{F}_\Theta,\nabla\Phi\mathring{\rho}_1) | \leq\|\nabla\mathscr{F}_\Theta\|_{\mathring{\rho}_1}F_1\les  \cE_2(1+F_1+\|\nabla^2\Phi\|_{\mathring{\rho}_1})F_1
\les \cE_2 F_1 
\,.
\end{equation}

For the viscous estimate, we have 
\beq\label{eq:vis_H1_1}
\begin{aligned}
|\nabla\mathscr{D}_\Theta\|_{\mathring{\rho}_1}&\les \cE_2 \B(\|\frac{\nabla^3U}{U}\|_{\mathring{\rho}_1}+\|\frac{\nabla^2U}{U}\|_{\mathring{\rho}_1}\|\frac{\nabla U}{U}\|_{\infty }+\|\frac{\nabla U}{U}\|_{\infty}\|\nabla ^2\Theta\|_{\mathring{\rho}_1}\\&+(\|\frac{\nabla^2U}{U}\|_{\infty}+\|\frac{\nabla U}{U}\|_{\infty}^2)\|\nabla \Theta\|_{\mathring{\rho}_1}+1+F_1+\|\nabla^3\Phi\|_{\mathring{\rho}_1} \B)\,.
\end{aligned}
\eeq
To estimate the integral $L^2(\mr \rho_1)$, we apply $L^{\infty}$ estimate in the region $|z| \leq 1$ and \eqref{eq:Hk_estU} and $ \mr \rho_1 \les \la z \ra^{\s + \e/2} g_1 \les \la z \ra^{\s + \e/2} g_2 $  to the region $|z| \geq 1$: 
\[
\bal
|| \na^l U / U ||_{\mr \rho_1} 
\les || \na^l U / U ||_{L^{\infty}(|z|\leq 1)}
+ || \la z \ra^{\s + \e/2} \na^l U / U ||_{g_1} 
\les 1 +  E_0 + E_{K} \les 1 , \quad l = 2,3 .
\eal
\]
Applying \eqref{eq:Hk_estU_linf} with $(l, m) =(1,0),(0,0)$, we get 
\beq\label{eq:vis_H1_2}
|\na^{l+1} U / U| \les 1.
\eeq
As a consequence, we can simplify the viscous estimate as follows:
\begin{equation}
    \label{h1-vis}|\nabla\mathscr{D}_\Theta\|_{\mathring{\rho}_1}\les \cE_2 (1+F_1+\|\nabla ^2\Phi\|_{\mathring{\rho}_1} +\|\nabla^3\Phi\|_{\mathring{\rho}_1})\,.
\end{equation}
Finally, since $\mr \rho_1$ is $L^1$ integrable and $\mr \rho_i \les \la z \ra^{2i-2} \mr \rho_1$
\eqref{eq:comp_wg}, we can decompose the integral region into $I=[0,1]^d$ and the rest of the outer region $I^c$ as in the $L^2$ estimate of the amplitude to compute 
$$\|\nabla^l\Phi\|_{\mathring{\rho}_1} \lesssim\sup_{z\in I}|\nabla^l\Phi|+\|\nabla^l\Phi|z|^{2l-2}\|_{\mathring{\rho}_1} \les \Ga+F_l\,,
          \quad  || \na^l \bar \Th ||_{\mathring{\rho}_1} \les 1, \quad  l=2,3\,.
$$
We use Proposition \ref{prop:inter} and the bootstrap assumption \eqref{eq:boot_EE} to further obtain 
\[
\|\nabla^l\Phi\|_{\mathring{\rho}_1} + || \na \bar \Th |_{\mathring{\rho}_1}
 \les 1 + F_1 + F_K \les 1.
\]
Plugging in the estimate in \eqref{h1-vis} and combined with \eqref{lin-theta}, \eqref{non-theta}, and \eqref{res-theta}, we establish Lemma \ref{lem:h1}.
\end{proof}
\subsection{$H^K$ stability analysis}\label{hksec}

For the estimate at the highest order, we consider the weighted $H^K$ energies \eqref{norm:Ek}
$$E_K^2=(|\nabla^K W|^2,\rho_K)\,,\quad F_K^2=(|\nabla^K \Phi|^2,U^2\rho_K)\,.$$ 
In this section, we will establish the following lemma.
\begin{lemma}[Weighted $H^K$ estimate]\label{lem:hk}
Under the bootstrap assumption \ref{asss}, we have
    \begin{equation}
     \label{hk-collected}
     \frac{1}{2} \f{d}{d \tau }  (E_K^2+F_K^2)\leq-\frac{\epsilon}{8}(E_K^2+F_K^2)+ \mu_0 \cE_1(E_K+F_K)\,\end{equation}
     for some absolute constant $\mu_0>0$.
\end{lemma}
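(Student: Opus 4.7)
The plan is to apply $\nabla^K$ to \eqref{realp}, \eqref{imaginep}, pair against $\nabla^K W\,\rho_K$ and $\nabla^K \Phi\,\mathring\rho_K$ respectively, and add the two identities so as to exploit the algebraic cancellation of the principal cross viscous terms highlighted in \eqref{topcouple}. First I would extract the linear damping. For $E_K^2$, commuting $\nabla^K$ with $-\tfrac12\Lambda$ produces $-\tfrac12\Lambda\nabla^K W-\tfrac{K}{2}\nabla^K W$, and one integration by parts against $\rho_K$ replaces the transport by the multiplier $\tfrac{d}{4}+\tfrac{\Lambda\rho_K}{4\rho_K}$. Combined with $c_U=-\tfrac{1}{p-1}+c_W$, the principal Leibniz term $p\bar U^{p-1}\nabla^K W$, and the far-field scaling $\rho_K\sim|z|^{-2\sigma-\epsilon-d+2K}$ from \eqref{wg:rho}, a weighted AM--GM argument analogous to the one leading to \eqref{c0} yields damping $\leq-\tfrac{\epsilon}{4}E_K^2$. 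For $F_K^2$ the weight $\mathring\rho_K=U^2\rho_K$ is time-dependent; I would differentiate through it via \eqref{real} to write $\partial_\tau(U^2)=2c_U U^2-\tfrac12\Lambda(U^2)+2U^{p+1}+\text{(small)}$, so that $-\tfrac12\Lambda(U^2)$ recombines under the integral with the $-\tfrac12\Lambda\Phi$ transport to produce a single divergence-form term whose contribution is $-\tfrac{\epsilon}{4}F_K^2$ after using the far-field scaling $\mathring\rho_K\sim|z|^{-\epsilon-d+2K}$.

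The critical step is the cross viscous cancellation. At top order
\begin{align*}
\nabla^K \mathscr D_U &= \Delta_\Qc\nabla^K U - \beta U\,\Delta_\Qc\nabla^K\Theta+\text{l.o.t.},\\
\nabla^K \mathscr D_\Theta &= \beta\,\Delta_\Qc\nabla^K U/U + \Delta_\Qc\nabla^K\Theta+\text{l.o.t.},
\end{align*}
where the ``l.o.t.''\ carry strictly fewer derivatives on either $U$ or $\Theta$ and are controlled via \eqref{eq:Hk_est1}--\eqref{eq:Hk_est2_Hk} and the decay bounds of Corollary \ref{cor:decay}. The self-parts, after one integration by parts, give the non-positive quadratic forms $-\int \partial_i\nabla^K W\,\Qc_{ij}\,\partial_j\nabla^K W\,\rho_K$ and $-\int U^2\,\partial_i\nabla^K\Phi\,\Qc_{ij}\,\partial_j\nabla^K\Phi\,\rho_K$; I keep these as damping for $\nabla^{K+1}W$ and $\nabla^{K+1}\Phi$ in the weighted $L^2$ sense with the positive-definite $\Qc$. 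Substituting $\nabla^K U=\nabla^K W+\nabla^K\bar U$ and $\nabla^K\Theta=\nabla^K\Phi+\nabla^K\bar\Theta$ (with $\bar U,\bar\Theta$ contributions handled as lower order via Corollary \ref{cor:decay}), the sum of the two cross contributions reads
\begin{equation*}
-\int\beta U(\Delta_\Qc\nabla^K\Phi)\,\nabla^K W\,\rho_K+\int\beta U(\Delta_\Qc\nabla^K W)\,\nabla^K\Phi\,\rho_K+\text{l.o.t.},
\end{equation*}
and a double integration by parts using the symmetry $\Qc=\Qc^T$ makes the symmetric $\Qc_{ij}\partial_i\nabla^K W\,\partial_j\nabla^K\Phi$ terms identical with opposite signs and therefore cancel, leaving only commutators of the form $\int\partial_i(U\rho_K)\,\Qc_{ij}(\nabla^K W\,\partial_j\nabla^K\Phi-\nabla^K\Phi\,\partial_j\nabla^K W)$. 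Using $|\nabla U|/U\lesssim\langle z\rangle^{-1}$ from \eqref{eq:Hk_estU_linf} and $|\nabla\rho_K|/\rho_K\lesssim\langle z\rangle^{-1}$, Cauchy--Schwarz combined with the self-damping absorbs these commutators up to a remainder of size $|\Qc|(E_K^2+F_K^2)$, which is in turn absorbed into the linear damping by reducing its constant from $\tfrac{\epsilon}{4}$ to $\tfrac{\epsilon}{8}$ via the bootstrap smallness of $\Qc$.

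The residue and nonlinear pieces at the $\nabla^K$ level are routine. Via the Newton--Leibniz expansions \eqref{eq:Newton}, \eqref{eq:newton_th} and the product bound \eqref{eq:Hk_est2_Hk} with $(W_1,W_2)=(W,W)$, one obtains $\|\mathscr N_U\|_{\dot\cH^K}\lesssim(E_{K-1}+E_0)(E_K+E_0)\lesssim\cE_1$ under the bootstrap \eqref{eq:boot_EE}; the $U^2$-weighted version for $\mathscr N_\Theta$ follows from \eqref{eq:comp_wg}. The residues $\mathscr F_U,\mathscr F_\Theta$ split into factors of size $\cE_2\leq\cE_1$ multiplied by quantities of $\cH^K$-norm $\lesssim1$, contributing $\lesssim\cE_1(E_K+F_K)$ after absorbing quadratic $\cE_2 E_K^2$ pieces into the linear damping. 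I expect the main obstacle to be the bookkeeping of the double integration by parts for the cross viscous cancellation, including tracking the commutators $[\nabla^K,U^2]$ that arise when transferring derivatives across the $U^2$ factor of $\mathring\rho_K$, and ensuring every surviving term is bounded either by $|\Qc|(E_K^2+F_K^2)$ (absorbable) or by $\cE_1(E_K+F_K)$; a secondary subtlety is controlling intermediate-derivative Leibniz terms such as $\nabla^j(p\bar U^{p-1})\nabla^{K-j}W$ for $1\leq j\leq K-1$, for which the sharp interpolation/embedding estimates \eqref{eq:inte:1}--\eqref{eq:inte:4} together with Corollary \ref{cor:decay} must be used to place each factor in the correct weighted space.
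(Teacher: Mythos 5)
Your overall architecture matches the paper's proof: apply $\nabla^K$, pair against $\nabla^K W\rho_K$ and $\nabla^K\Phi\,\mathring\rho_K$, differentiate through the time-dependent weight $U^2\rho_K$ using the $U$-equation so the transport terms recombine, and cancel the $\beta$-cross viscous terms by integration by parts using $\Qc=\Qc^T$, keeping the $\Qc$-quadratic forms as damping. However, there is a genuine gap in your treatment of the nonlinear phase term. Unlike $\mathscr N_U$, which is quadratic in $W$ so that Leibniz always leaves at least one low-derivative factor of $W$, the term $\mathscr N_\Theta=\delta(p-1)W\int_0^1(\bar U+\alpha W)^{p-2}d\alpha$ is only \emph{linear} in $W$. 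When all $K$ derivatives fall on $W$, the resulting contribution $\delta(p-1)\big((\bar U+\alpha W)^{p-2}\nabla^K W,\nabla^K\Phi\,U^2\rho_K\big)$ is of size $C\,E_K F_K$ with an order-one constant $C=C(p,\delta)$; it is \emph{not} bounded by $\cE_1(E_K+F_K)$ (for small $E_K,F_K$ one has $\cE_1(E_K+F_K)\sim(E_K+F_K)^3\ll E_KF_K$), and it cannot be absorbed by $-\tfrac{\epsilon}{8}(E_K^2+F_K^2)$ since $\epsilon$ in \eqref{eps} is tiny. So your claim that ``the $U^2$-weighted version for $\mathscr N_\Theta$ follows from \eqref{eq:comp_wg}'' fails precisely at top order, and your damping budget (you spend the reduction $\tfrac{\epsilon}{4}\to\tfrac{\epsilon}{8}$ only on the viscous commutators of size $|\Qc|(E_K^2+F_K^2)$) reserves nothing for this term.

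The paper's proof deals with this by an AM--GM splitting of the offending pairing into $\tfrac12\|U^{1/2}(\bar U+\alpha W)^{(p-2)/2}\nabla^K W\|_{\rho_K}^2+\tfrac12\|U^{1/2}(\bar U+\alpha W)^{(p-2)/2}\nabla^K\Phi\|_{\mathring\rho_K}^2$, together with the pointwise bound $U(\bar U+\alpha W)^{p-2}\leq(\min\{p-1,c_p\})^{-1}\langle z\rangle^{-2}+C\cE_1$, and then absorbs the decaying coefficient into the linear dampings $d_K$ and $\mathring d_K$: this is exactly what dictates the large choice of $K$ in \eqref{K} (so that $-K/2$ dominates the $(p+1)/\min\{p-1,c_p\}$-sized coefficient near the origin) and the deferred choice of the weight constant $c_1$ in \eqref{defered}. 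Without reproducing this absorption step, the claimed inequality \eqref{hk-collected} does not follow. A secondary, smaller imprecision: the ``l.o.t.''\ in your viscous expansion do not all carry fewer derivatives --- the gradient--gradient terms $\langle\nabla U,\nabla\Theta\rangle_\Qc$, $U\langle\nabla\Theta,\nabla\Theta\rangle_\Qc$, etc., produce $(K+1)$-derivative factors after $\nabla^K$ and must be absorbed by Cauchy--Schwarz against the $\Qc$-damping (using their built-in $|\Qc|^{1/2}$ smallness), not by the product estimates \eqref{eq:Hk_est1}--\eqref{eq:Hk_est2_Hk} alone.
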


\subsubsection{Estimates of the amplitude}

Recall the definitions of $\Ls_{\bar U}, \Ns_U, \Fs_U$ from \eqref{eq:lin}. We have  
$$\frac{1}{2} \f{d}{d \tau} E_K^2=(\nabla^K (\mathscr{L}_{\bar{U}}W)+\nabla^K \mathscr{N}_U+{\nabla^K\mathscr{F}_U}+{\nabla^K\mathscr{D}_U},\nabla^KW\rho_K)\,.
$$
For the leading order linear term, we can calculate the damping similarly as in the $L^2$ estimates.
{
A direct computation yields $\bar U^{p-1} \in \cH^i$ \eqref{ss-} for any $i \geq 0$. 
Using the Leibniz rule, the product rule \eqref{eq:prod} in Proposition \ref{prop:inter} with $i + j = K, j \leq K-1, m =K-1, n = i + d$, and $g_K \approx \rho_K$ \eqref{wg:rho}, \eqref{norm:Hk}, we yield 
\[
|| \na^K(\bar U^{p-1} W) - \bar U^{p-1} \na^K W ||_{\rho_K}
\les \sum_{ j \leq K-1} \na^{K-j} || \bar U^{p-1}||_{\cH^{i+d}} || W ||_{\cH^j}
\les \sum_{ j \leq K-1} E_j \les \cE_1.
\]
}
Therefore, we can compute 
$$
\nabla^K(\mathscr{L}_{\bar{U}}W)=c_{\bar{U}} \nabla^KW- \f{1}{2} \sum_iz_i\nabla^K\partial_iW-
\f{K}{2} \nabla^KW+p\bar{U}^{p-1}\nabla^KW+ { O(\cR_{\mathscr{L}, K}),  \quad 
|| \cR_{\mathscr{L}, K } ||_{\rho_K} \les \cE_1.}
$$
We can calculate the damping similar to the $L^2$ case as follows:
$$\begin{aligned}
    d_K & \coloneqq-\frac{1}{p-1}-\frac{K}{2}+p\bar{U}^{p-1}+\frac{1}{4\rho_K}\nabla\cdot(z\rho_K)\\&=\frac{p}{p-1+c_p|z|^2}-\frac{1}{p-1}-\frac{K}{2}+\frac{d+(2K+\frac{4}{p-1}-\epsilon)c_1|z|^{\frac{4}{p-1}-\epsilon-d+2K}}{4(1+c_1|z|^{\frac{4}{p-1}-\epsilon-d+2K})}\leq-\frac{\epsilon}{8}\,,
\end{aligned}$$
where the last inequality holds for a sufficiently small $c_1$, which we defer till \eqref{defered} where we combine this damping with the estimates of the nonlinear term in the phase equation. 

For the nonlinear term, we use Netwon-Leibniz's formula twice as in the $L^2$ estimate \eqref{eq:Newton}, to derive  
$$
 | (\nabla^K\mathscr{N}_U,\nabla^K W\rho_K) | \lesssim \sup_{\alpha\in[0,1]}(1-\alpha)\|\nabla^K(W^2(\bar{U}+\alpha W)^{p-2})\|_{\rho_K}E_K\,.
$$
Since $|| f||_{\rho_K} \les || f ||_{\cH^K}$  \eqref{wg:rho}, \eqref{norm:Hk}, 
using the product estimate \eqref{eq:Hk_est2_Hk} with $(W_1, W_2) = (W, W)$ and $ || f ||_{\cH^K} 
\les E_0 + E_K$ \eqref{eq:norm_comp}, we obtain 
\beq\label{non-hk} 
\bal
 | (\nabla^K\mathscr{N}_U,\nabla^K W\rho_K) | & \les E_K || W||_{\cH^K} || W ||_{\cH^{K-1}} 
 \les E_K  ( E_K + E_0) ( E_0 + E_{K-1}) 
\les E_K \cE_1. 
\eal
\eeq
Recall $\cE_2$ from \eqref{eq:lot}. For the residue term, we have via integration by parts that 
$$
| (\nabla^K\mathscr{F}_U,\nabla^KW\rho_K) | {\les \cE_2}(E_K^2+ E_K+ (|\nabla^KW|^2,|z\cdot\nabla\rho_K|+|\nabla\rho_K|))\,.
$$
Since we have $|\nabla\rho_K|\langle z\rangle\lesssim\rho_K$, we can conclude the residue estimate
\begin{equation}
    \label{res-hk} | (\nabla^K\mathscr{F}_U,\nabla^KW\rho_K) |\les \cE_2(E_K+E_K^2) {\les \cE_1 } E_K\,.
\end{equation}

\subsubsection{Estimates of the phase}
We have 
\begin{align*}
     \frac{1}{2} \f{d}{d \tau } F_K^2 & =\Big(\nabla^K (- \f{1}{2} \Lambda \Phi)+\nabla^K \mathscr{N}_\Theta+{\nabla^K\mathscr{F}_\Theta}+{\nabla^K\mathscr{D}_\Theta},\nabla^K\Phi U^2\rho_K\Big) \\
     & \qquad \qquad  \qquad +\Big(\mathscr{L}_{\bar{U}}W+ \mathscr{N}_U+{\mathscr{F}_U}+{\mathscr{D}_U},  U|\nabla^K\Phi|^2\rho_K\Big)\,.
\end{align*}
Notice that the weight is time-dependent.
We remark that it is essential to pair the two linear terms and the two residue terms together to cancel out the leading order term via integration by parts. For the leading order linear term, we have via integration by parts that 
$$
(\nabla^K (- \f{1}{2} \Lambda \Phi),\nabla^K\Phi U^2\rho_K)+(\mathscr{L}_{\bar{U}}W+\mathscr{N}_U,  U|\nabla^K\Phi|^2\rho_K)=(\mathring{d}_{K},|\nabla^K\Phi|^2U^2\rho_K)\,,
$$
where we can calculate the damping $$\mathring{d}_{K}=\frac{-K}{2}+\frac{1}{4\rho_K}\nabla\cdot(z\rho_K)-\frac{1}{p-1}+\frac{{U}^{p}}{U}< d_K+{U}^{p-1}-\bar{U}^{p-1}\,.$$
Notice that by \eqref{eq:Newton}, \eqref{eq:inte:3} in Proposition \ref{prop:inter} with $l = 0$, and Corollary \ref{cor:decay}, we can further estimate
\beq\label{eq:main_Up}
     |{U}^{p-1}-\bar{U}^{p-1}|\les \sup_{0\leq\alpha\leq1}|W(\bar{U}+\alpha W)^{p-2}|\les 
     (E_0 + E_{K-1} ) (1+  E_K+E_0 )^{p + 2}) \les \cE_1.
    \eeq
For the residue term, similarly via integration by parts, we have \begin{equation}
\label{res_hk1}
\begin{aligned}
     | (\nabla^K \mathscr{F}_\Theta,\nabla^K\Phi U^2\rho_K) | +| (\mathscr{F}_U,  U|\nabla^K\Phi|^2\rho_K) | & {  \les \cE_2}(F_K+F_K^2+(\frac{\nabla\cdot((Pz+v)\rho_K)}{2\rho_K},|\nabla^K\Phi|^2U^2\rho_K))\\ & \les \cE_2(F_K+F_K^2)\les \cE_1F_K\,,
\end{aligned}\end{equation}
where the inequality is again by the fact that $|\nabla\rho_K|\langle z\rangle\lesssim\rho_K$.

	For the nonlinear term, using Newton-Leibniz's formula \eqref{eq:newton_th}, we obtain 
\[
 |  \na^K \Ns_{\Th} | 
\leq I_{0, K} + C \sum_{j\leq K-1} I_{0, j} 
,\quad I_{i, j} = \d(p-1)  \cdot  \na^j W \cdot \na^{K-j} ( U + \al \bar W ) . 
\]
Applying \eqref{eq:Hk_est1} in Proposition \ref{prop:Hk} with $(W_1, W_2, j_1, j_2, k) = (U, W, 0, j, K-j)$ and $\bar U \in \cH^i$ \eqref{eq:U_decay}, we obtain 
\[
|| U I_{0, j} ||_{\rho_K} \les || U ||_{\cH^{K}} || W ||_{\cH^{K-1}} 
\les E_0 + E_{K-1}  \les \cE_1.
\]
Recall $\mr \rho_K = U^2 \rho_K$ \eqref{wg:rho}. For $ j\leq K-1$, the above estimate implies 
\beq\label{eq:non_HK_Th1}
 ||  I_{0, j} ||_{\mr \rho_K} = || U I_{0, j} ||_{\rho_K} \les \cE_1,
 \quad | ( I_{0, j}, \na^K \Phi \mr \rho_K ) | 
 \les  ||  I_{0, j} ||_{\mr \rho_K}  F_K \les \cE_1 F_K.
\eeq
The term $I_{0, K}$ is trickier and we need to estimate by an AM-GM inequality: 
{
$$\begin{aligned}
    ((\bar{U}+\alpha W)^{p-2}\nabla^K W,\nabla^K\Phi U^2\rho_K)&\leq\frac{1}{2}\| ( U^{\f{1}{2}}(\bar{U}+\alpha W)^{\frac{p-2}{2}}\nabla^K \Phi \|^2_{\mr \rho_K} +\frac{1}{2}\|U^{ \f{1}{2}}(\bar{U}+\alpha W)^{\frac{p-2}{2}}\nabla^K W\|^2_{\rho_K}\, ,
\end{aligned}$$
where we pair one of $U$ in $U^2$ with $\rho_K^{1/2}$ to get $\mr \rho_K^{1/2}$. Applying $U = (1-\al) W + \bar U + \al W$, Newton-Leibniz's rule for $(\bar U + \al W)^{p-1} - \bar U^{p-1}$, Proposition \ref{prop:inter} for $W$, and Corollary \ref{cor:decay} for $\bar U + s W, s \in [0, 1]$, which are similar to the estimate of $\mathscr{N}_{\Th}$ \eqref{eq:newton_th}, we obtain 
\[
\bal
U (\bar U + \al W)^{p-2} 
& \leq (\bar U + \al W)^{p-1} + C |W (\bar U + \al W)^{p-2} |
\leq \bar U^{p-1} + C \sup_{s \in [0, 1]}   |W (\bar U + s W)^{p-2} \\
&\leq \bar U^{p-1} + C (  E_{K-1} + E_0) 
\leq 
  (\min\{p-1,c_p\})^{-1}\langle z\rangle^{-2}+ C \cE_1 ,
\eal
\]
}where we extract a decay at the far field for the leading order term.
We can calculate the damping 
\begin{align*}
    &\d(p-1)(\min\{p-1,c_p\})^{-1}\langle z\rangle^{-2}+d_K \\
    & \quad  \leq \frac{ \d(p-1)(p+1) }{\min\{p-1,c_p\}(1+|z|^2)}-\frac{1}{p-1}-\frac{K}{2}+\frac{d+(2K+\frac{4}{p-1}-\epsilon)c_1|z|^{\frac{4}{p-1}-\epsilon-d+2K}}{4(1+c_1|z|^{\frac{4}{p-1}-\epsilon-d+2K})}\,.
\end{align*}
Recall the definition of $K$ in \eqref{K} and
similar to the $L^2$ damping, we can use a weighted AM-GM inequality to conclude for a sufficiently small positive $c_1$, we have  
\begin{equation}
    \label{defered}
   (p-1) \d (\min\{p-1,c_p\})^{-1/2}\langle z\rangle^{-1/2}+d_K\leq-\frac{\epsilon}{8}\,.
\end{equation}
As a consequence, we collect the linear and nonlinear estimates of the phase, and the linear estimate of the amplitude together as follows:
\begin{align}
   \Big(\nabla^K(\mathscr{L}_{\bar{U}}W),\nabla^K W\rho_K\Big)+ \Big(\nabla^K (- \f{1}{2} \Lambda \Phi+\mathscr{N}_\Theta)&+\frac{\mathscr{L}_{\bar{U}}W+\mathscr{N}_U}{U},\nabla^K\Phi U^2\rho_K\Big) \nonumber \\
    & \quad \leq  -\frac{\epsilon}{8}(E^2_K+F^2_K)+C \cE_1( E_K+F_K)
    \,. \label{non-hk1}
\end{align}

\subsubsection{Estimates of the viscous terms}
Finally, we estimate the viscous terms. The simpler term  can be estimated as follows: \begin{equation}
    \label{vos-ls}({\mathscr{D}_U},  U|\nabla^K\Phi|^2\rho_K)\leq\|\frac{\mathscr{D}_U}{U}\|_{\infty}F_K^2\les \cE_1 F_K\,.
\end{equation}
The last inequality is derived similarly to the $H^1$ viscous estimates in \eqref{eq:vis_H1_1}, \eqref{eq:vis_H1_2}.

We group leading order viscous terms as follows and estimate them together: $$(\nabla^K\mathscr{D}_U,\nabla^K W\rho_K)+(\nabla^K\mathscr{D}_\Theta,\nabla^K\Phi U^2\rho_K)\,,$$
and we will use integration by parts to cancel out the leading order terms and extract damping.  
Recall the definition of the viscous terms in \eqref{eq:vis}. For any tensor $f$, we define $$|f|^2_{\Qc}=\sum_{i}(\nabla f_i)^T\Qc\nabla f_i\,,$$ where we sum over its scalar entry components $f_i$. 

Notice that $|\nabla\rho_K|\lesssim\rho_K$. We compute the damping of the amplitude using integration by parts and the Cauchy-Schwarz inequality as 
$$\begin{aligned}
    (\nabla^K\Delta_\Qc  U,\nabla^K W\rho_K) & \leq C \cE_1 E_K -(|\nabla^{K} W|^2_{\Qc},\rho_K) +C|\Qc|^{1/2}\|\nabla^{K} W\|_{\rho_K}\||\nabla^{K} W|_{\Qc}\|_{\rho_K}  \\
    & \leq C \cE_1 E_K-\frac{1}{2}(|\nabla^{K} W|^2_{\Qc},\rho_K)\,.
\end{aligned}$$
Using \eqref{eq:inte:3} from Proposition \ref{prop:inter} and \eqref{eq:U_decay}, we get 
\[
|\na U| \les U 
, \quad  |\nabla(U^2\rho_K)|\lesssim  
|\na U| U \rho_K + U^2 | \na \rho_K | \les  
U^2\rho_K. 
 \]
Similarly, we compute the damping of the phase as
$$
  (\nabla^K\Delta_\Qc  \Theta,  \nabla^K\Phi U^2\rho_K)\leq C \cE_1F_K-\frac{1}{2}(|\nabla^{K} \Phi|^2_{\Qc},U^2\rho_K)\,.
$$

For the four intermediate terms in the viscous terms 
$$
I_1 = \langle\nabla U,\nabla\Theta\rangle_\Qc \,,\quad I_2 = U\langle\nabla \Theta,\nabla\Theta\rangle_\Qc \,,\quad I_3 = \frac{\langle\nabla U,\nabla\Theta\rangle_\Qc }{U}\,,\quad I_4 = \langle\nabla \Theta,\nabla\Theta\rangle_\Qc \,,
$$
we can simply control their weighted norms using the diffusion term. 

{We consider the most challenging term $I_3$. Using the Leibniz rule, \eqref{eq:Hk_estU} in Proposition \ref{prop:Hk}, we obtain 
\[
|| I_3 ||_{\mr \rho_K}
\les \sum_{0\leq i\leq K} || I_{3, i} ||_{\mr \rho_K}, \quad  I_{3, i} = (\na^i \f{ \na U }{U}, \na^{K-i+1} \Th )_\Qc .
\]
For $ 1 \leq i \leq K-1$, applying  \eqref{eq:Hk_estU_linf} to $\na U / U$ if $i \leq  K/2 + 1 < K-d-1$ and \eqref{eq:inte} to $\Th$ if $ i > K/2$, which implies $K-i+1 \leq K/2+1 < K-d-1$, we obtain
\[
|I_{3, i} |
\les |\Qc| ( \la z \ra^{-i}  | \na^{K-i+1} \Th|
  + \la z \ra^{-(K-i)} |  \na^i (\na U / U)|    ).
\]
Since $ i \leq K-1, K-i+1 \leq K$, using the estimate \eqref{eq:comp_wg} for weights
\[
 \mr \rho_K^{1/2} \les \la z \ra^{i-1} \mr \rho^{1/2}_{K-i+1}, \quad \la z \ra^{-(K-i)} \mr \rho_K^{1/2}
 \les \mr \rho_{i}^{1/2}
 \les \la z \ra^{\s + \e/2} \rho_{i}^{1/2}, 
 \]
and \eqref{eq:Hk_estU}, we obtain 
\[
\bal
 ||I_{3, i}||_{ \mr \rho_K}
& \les |\Qc| 
(||\na^{K-i+1} \Th  ||_{ \mr \rho_{K-i}} 
+ || \la z \ra^{\s +\e/2}  \na^i (\na U / U) ||_{\rho_i})  \les |\Qc|  \les \cE_1.
\eal
\]
}
For $I_{3,0}, I_{3, K}$, we use the Cauchy-Schwarz inequality to compute that its $\rho_K$ norm is bounded by $$ \cE_1+|\Qc|^{1/2}(\|\nabla\Theta\|_{\infty}\||\nabla^{K} W|_{\Qc}\|_{\rho_K}+\|\frac{\nabla U}{U}\|_{\infty}\|U|\nabla^{K} \Phi|_{\Qc}\|_{\rho_K})\,.$$
Similarly, we have the estimates for the other three viscous terms $I_1, I_2, I_4$.
Combined with \eqref{eq:vis_H1_2}, we can use the Cauchy-Schwarz inequality to derive that 
\begin{align*}
&\Big(-2\beta\nabla^K\langle\nabla U,\nabla\Theta\rangle_\Qc -\nabla^K(U\langle\nabla \Theta,\nabla\Theta\rangle_\Qc ),\nabla^K W\rho_K\Big)+\Big(2\nabla^K\frac{\langle\nabla U,\nabla\Theta\rangle_\Qc }{U}-\beta\nabla^K{\langle\nabla \Theta,\nabla\Theta\rangle_\Qc },\nabla^K\Phi U^2\rho_K\Big)\\
& \qquad \leq C\cE_1(E_K+F_K)+\frac{1}{8}((|\nabla^{K} W|^2_{\Qc},\rho_K)+(|\nabla^{K} \Phi|^2_{\Qc},U^2\rho_K))\,.
\end{align*}

Finally, for the last two viscous terms, we use integration by parts to cancel out the leading order terms. 
{
Applying estimates similar to the those for $I_3$ in the above, we can extract the leading order terms, which involve $\na^{K+2} U$ or $\na^{K+2} \Th$,
\[
\bal
-\beta \B(\nabla^K(U\Delta_\Qc  \Theta),\nabla^K W\rho_K \B)
& = - \beta ( U  \Delta_\Qc  \na^K \Th, \na^K W \rho_K) + O(\cE_1 E_K) + \f{1}{16} (|\nabla^{K} \Phi|^2_{\Qc},U^2\rho_K) , \\
\beta\B(\nabla^K\frac{\Delta_\Qc  U}{U},\nabla^K \Phi U^2\rho_K\B)& = \beta \B( \f{ \D_\Qc  \na^K U }{U} , \na^K \Phi U^2 \rho_K \B) + O(\cE_1 F_K) + \f{1}{16} (|\nabla^{K} U|^2_{\Qc},\rho_K)
.  \\
\eal
\]
}
Now, we use $U = \bar U + W,  \Th = \bar \Th + \Phi$ and integration by parts to cancel out the leading order terms. 
$$
\begin{aligned}
    &-(\nabla^K\Delta_\Qc  \Theta,\nabla^K WU\rho_K)+({\nabla^K\Delta_\Qc  U},\nabla^K \Phi U\rho_K) \\
 = &  -(\nabla^K\Delta_\Qc  \Phi,\nabla^K WU\rho_K)+ ({\nabla^K\Delta_\Qc  W},\nabla^K \Phi U\rho_K) + O(\cE_1(E_K+F_K)) \\
 = & \sum_{i,j} \B(  \Qc_{ij} ( -\pa_i ( \pa_j \na^K \Phi \cdot \na^K W)  + \pa_j ( \pa_i \na^K W \cdot \na^K \Phi ) ), U \rho_K \B) \\
     \leq &  C \cE_1(E_K+F_K)+\frac{1}{16\beta}((|\nabla^{K} W|^2_{\Qc},\rho_K)+(|\nabla^{K} \Phi|^2_{\Qc},U^2\rho_K))\,.
\end{aligned}
$$ 
We notice that the remaining terms from integration by parts are controlled since $|\nabla(U\rho_K)|\lesssim U\rho_K$.

Combining the viscous estimates with the estimates \eqref{non-hk}, \eqref{res-hk}, \eqref{res_hk1}, and \eqref{non-hk1}, we conclude the proof of Lemma \ref{lem:hk}.
\subsubsection{Summary of the $H^K$ estimates}
 Using \eqref{eq:inte:1}, \eqref{eq:inte:2} in Proposition \ref{prop:inter}, for any $\mu > 0$, we obtain 
\[
\cE_1 \leq |\Qc| + H^{p-1}
+ C(\mu)(E_0 + F_1) + \mu( E_K + F_K) +( E_0 + F_1 +E_K + F_K)^2. 
\]
By Lemma \ref{lem:hk},
choosing $\mu < \f{\e}{16 \mu_0}$ and then collecting \eqref{hk-collected}, \eqref{h1-collecy}, and \eqref{l2-collecy}, we obtain that there exists a sufficiently small constant $1 > \nu_1>\nu_2>0$, $\nu_2$ determined after $\nu_1$, such that for the energy \begin{equation}   \label{eneq}E^2=E_K^2+F_K^2+1/\nu_1 F_1^2+ 1/\nu_2 E_0^2\,,
 \end{equation}
 the following estimate holds 
 \begin{equation}
 \label{booten}\frac{1}{2} \f{d}{d \tau} E^2\leq-\frac{\epsilon}{16}E^2+C(|\Qc|+H^{p-1})
E+CE^3 \,, 
\iff 
 \f{d}{d \tau} E \leq-\frac{\epsilon}{16}E +\mu_1(|\Qc|+H^{p-1})
+\mu_1 E^2 \,, 
 \end{equation}
 for some absolute constant $\mu_1>0$. Here, the constant $C$ would depend on $\nu_1, \nu_2$. Once we fix $\nu_1, \nu_2$, then $C$ becomes a fixed constant $\mu_1$. The estimate holds provided that Assumption \ref{asss} is valid.

\subsection{Lower bound of the amplitude}\label{sec:linfty}




We now prove the bootstrap Assumption \ref{asss} by estimating the lower bound of $U\rho$,
for the  weight $\rho=\bar{U}^{-1-\epsilon_2}$. We will proceed with a maximal principle argument and a barrier argument.  Notice that $$\nabla U=\frac{\nabla(U\rho)-U\nabla\rho}{\rho}\,,\quad\nabla^2 U=\frac{\nabla^2(U\rho)-U\nabla^2\rho}{\rho}-\frac{\nabla(U\rho)\nabla\rho^T+\nabla\rho\nabla(U\rho)^T-2U\nabla\rho\nabla\rho^T}{\rho^2}\,. $$ 
We compute by \eqref{real} that 
\begin{equation}\label{max-p}
\bal
& \partial_\tau(U\rho)= \cP_U (U \rho) ,
\quad \cP_U  f = A_0 f + A_1 \cdot \na f + \tr( Q \na^2 f ) .  \\
\eal
 \end{equation}
where the coefficients $A_0, A_1$ of the parabolic operator $\cP_U$ are:  
$$
\begin{aligned}
A_0 &=c_U-H^{p-1}\gamma+U^{p-1}+( \f{1}{2} z+\Pc z+\Vc)\cdot\frac{\nabla\rho}{\rho}-\frac{\Delta_\Qc \rho-2\beta\langle\nabla \rho,\nabla\Theta\rangle_\Qc }{\rho}+2\frac{\langle\nabla \rho,\nabla\rho\rangle_\Qc }{\rho^2}-\langle\nabla \Theta,\nabla\Theta\rangle_\Qc -\beta\Delta_\Qc \Theta\,, \\
A_1 & = -(  \f{1}{2} z+\Pc z+\Vc+2\frac{\Qc\nabla\rho}{\rho}+2\beta \Qc ), 
\end{aligned}$$

Notice that $\bar{U}$ is the approximate steady state and $|\nabla\rho|\langle z\rangle\lesssim\rho$. We can calculate the damping using the nonlinear estimate \eqref{eq:main_Up} and Lemma \ref{lem1}
that:
\[
A_0= O(\cE_1)-\epsilon_2 \frac{z\cdot\nabla\bar{U}}{\bar{U}}\,,
\quad A_1 = - ( \f{1}{2} z + \Pc z) + O(\cE_1)  ,\quad |\Pc| \les \cE_1.
\]
Next, we define a barrier function $F = \bar U^{ - 4 \e_2}$.  
Since $|z \cdot \na F | \les F, |\na^i F| \les F, i=1,2 $, we get
\[
\bal
\cP_U F & = ( O(\cE_1)- \e_2 \f{z \cdot \na \bar U}{\bar U}  
+ \f{ A_1 \cdot \na F }{F} ) F  + \tr(\Qc \na^2 F ) \\
& = (  O(\cE_1)   -\e_2 \f{ z \cdot \na \bar U}{\bar U} - \f{1}{2} \f{z \cdot \na F}{F}  ) F
=  ( O(\cE_1) -  (\e_2 - 2 \e_2) \f{z \cdot \na \bar U}{\bar U} )F 
= (O(\cE_1) + \e_2 \f{z \cdot \na \bar U}{\bar U} )F.
\eal
\]
- For $|z|\geq 1$, we derive by the form of $\bar{U}$ in \eqref{ss-}  the lower bound $-\frac{z\cdot\nabla\bar{U}}{\bar{U}}\geq \mu_{U,2}$ for some positive constant $\mu_{U,2}$\,. Recall the definition of $\cE_1$ \eqref{eq:lot} and $E \les 1$ from \eqref{eneq} and Assumption \eqref{eq:boot_EE}. Since $|\Qc| \les \tr(\Qc)$, for some positive constant $\mu_{U,1}$, we have 
\begin{equation}\label{max-g1}
A_0\geq \mu_{U,2}\epsilon_2 - \mu_{U,1}( \tr(\Qc)+H^{p-1} + E) \,,
\quad \cP_U F \leq (  \mu_{U,1}( \tr(\Qc)+H^{p-1} + E) - \mu_{U, 2} \e_2 ) F.
 \end{equation}

\noindent - For $|z|\leq 1$, since $\rho$ is bounded on the interval and we recall the definition of $\Gamma$ \eqref{ee}, we can estimate 
\begin{equation}\label{max-l1}
U\rho=\bar{U}^{-\epsilon_2}+W\rho\geq 4 C_b-C\Gamma\geq 4 C_b- \mu_{U,3} E\,,
 \end{equation}
for some positive constant $\mu_{U,3}$. Here we use the definition of $C_b$ in Assumption \ref{asss}.

Hence, by enforcing $E, |\Qc|+H^{p-1}$ sufficiently small, we will verify the following bootstrap assumption.

\begin{assumption}\label{ass:lower}
\beq\label{eq:boot_lower}
A_0 > 0, \  \cP_U F < 0, \quad  |z| \geq 1, \quad   \ U \rho > 2 C_b,  \ |z| \leq 1.
\eeq

\end{assumption}

Now, we consider $\Om_{ c} = U \rho + c F$ for $c >0$. From Corollary \ref{cor:decay} and the choice of $\e_2$, we obtain 
\[
 U \rho \les \la z \ra^{\s + \e /2} \la z \ra^{-\s + \e / 2 } 
 = \la z \ra^{\e},
 \quad F = \bar U^{-4 \e_2} \gtr \la z \ra^{ 8 \e_2/(p-1)} \gtr \la z \ra^{2 \e}.
\]

Under the assumption \eqref{eq:boot_lower} and \ref{asss},  for any $c > 0$, we have 
\[
\Om_c(z) > 2 C_b,\quad  |z| \leq 1, \quad \lim_{|z| \to \infty} \Om_c = \infty.
\]

Using the above estimates of $\cP_U$, we get 
\[
\pa_{\tau} \Om_c = \pa_{\tau} (U \rho) 
= \cP_U( U\rho + c F) - c \cP_U F = \cP_U \Om_c - c \cP_U F > \cP_U \Om_c.
\]

By choosing initial data with $ U \rho > 2 C_b$ and then applying the maximal principle to the operator $\cP_U$ on $|z| \geq 1$, we obtain 
\[
\Om_c > 2 C_b, \quad U \rho + c F \geq 2 C_b,\quad |z| \geq 1.
\]

Since $c$ is arbitrary, taking $c \to 0$, we prove $U \rho > 2 C_b$ for $|z| \geq 1$, which along with \eqref{eq:boot_lower} for $U \rho$ concludes 
$U \rho \geq 2 C_b, \forall z \in \Rb^d$ and strengthens \eqref{eq:boot_U} in Assumption \ref{asss}.

In Section \ref{sec:boot}, we prove Assumption \ref{ass:lower}.


\subsection{Bootstrap argument and blowup}\label{sec:boot}

In this section, we prove Theorem \ref{thm:stab} by combining previous estimates and use a bootstrap argument.


Recall the ODE of $Q$ from Lemma \ref{lem1} and $\cE_0, \Ga$ from \eqref{ee}
\beq\label{eq:EE_ODE}
  {\Qc}_\tau=-(\Qc_u+\frac{1}{2}\Qc_d)\Qc-\Qc(\Qc_u^T+\frac{1}{2}\Qc_d)+ \Oc(|\Qc| \Ec_0), \quad \cE_0 = |\Qc|(\Ga + \Ga^4) + H^{p-1}. 
\eeq

Since the parameters $\nu_i$ in the energy $E$ \eqref{eneq} have been chosen as some absolute constants, under the bootstrap assumption \ref{asss}, we get 
 \beq\label{eq:EE_ga}
E \les 1, \quad 
\Ga \les E_0 + E_K \les E \les 1,  \quad \cE_0 \les |\Qc| \Ga + H^{p-1}
\les |\Qc| E + H^{p-1}.
\eeq

Taking trace on both side of \eqref{eq:EE_ODE} and then using $\Qc = \Qc_u + \Qc_u^T + \Qc_d$,
\[
\bal
& \tr( (\Qc_u + \f{1}{2} \Qc_d) \Qc + \Qc(\Qc_u + \f{1}{2} \Qc_d )^T )
= \tr( (\Qc_u + \f{1}{2} \Qc_d + \f{1}{2} \Qc_d + \Qc_u^T  )  \Qc ) = \tr(\Qc^2), \\
 & |\Qc| \approx \tr(\Qc), \quad 
\tr(\Qc^2) = 
\sum \lambda_{\Qc,i}^2 \geq \f{1}{d}( \sum \lambda_{\Qc,i})^2 =
 \f{1}{d} ( \tr (\Qc))^2  ,
 \eal
 \]
 where $\lambda_{\Qc, i}$ is the eigenvalue of $\Qc$,  and the above estimates, we get for a constant $\mu_2$:
\beq\label{eq:EE_ODE2}
\pa_{\tau} \tr (\Qc) \leq - \tr(\Qc^2) + \mu_2 ( E \tr(\Qc)^2  + H^{p-1} \tr(\Qc))
\leq  - \f{1}{d}  (\tr \Qc)^2 + \mu_2 ( E (\tr \Qc)^2 + H^{p-1} \tr(\Qc) ).
\eeq

Recall $c_W$ from \eqref{per_ansatz}. To simplify the nonlinear estimates, in addition to bootstrap assumption \ref{asss}, we impose the following assumption 

\begin{assumption}\label{ass:refine}
\beq\label{eq:boot_refine}
|c_W| < \f{1}{2} \min( (p-1)^{-1}, 1), \quad E(\tau) < \min( \f{1}{ 4 d \mu_1} , \f{\e}{ 32 \mu_2} ) , 
\eeq
where $\mu_1$ is the constant in \eqref{eneq}. We denote 
\beq\label{eq:EE_nota}
 \e_1 = \mu_2 H^{p-1}, \quad a(\tau) = \exp( \mu_2 \int_0^\tau H^{p-1}(s) ds),
 \quad \lam = \f{\e}{32}. 
\eeq
\end{assumption}

\noindent 
\textbf{Consequence of bootstrap assumptions.} 
We perform the energy estimates under the assumptions \eqref{eq:boot_refine} and \ref{asss} and show that these estimates can be strengthened.

Using \eqref{eq:dyn_scal1}, \eqref{per_ansatz}, we obtain 
$(p-1) c_U(s) = -1 + (p-1) c_W(s) \leq -\f{1}{2}$ and 
\beq\label{cUUU}
H^{p-1}(\tau) \leq H^{p-1}(0) \exp(\int_0^{\tau} (p-1)c_U(s) d s)
\leq H^{p-1}(0) \exp( - \tau / 2),
\quad - \f{1}{d} + \mu_2 E(\tau) < - \f{1}{2 d}.
\eeq

We can solve the ODE of $(\tr(Q))^{-1}$ using the above estimate and \eqref{eq:EE_ODE2} to obtain 
\[
\pa_{\tau} E_\Qc ^{-1} \geq \f{1}{2d} - \mu_2 H^{p-1} E_\Qc ^{-1},\quad E_\Qc  := \tr(\Qc). 
\]

By choose $H^{p-1}(0)$ small enough such that $ \exp(2  \e_1 )  < 2$, for any $0\leq s \leq \tau$, we get 
\[
a(\tau) a(s)^{-1} \leq e^{ \e_1 \int_0^\tau \exp(-s/2) d s  } \leq e^{ 2 \e_1}  < 2,
\quad a(\tau)^{-1} a(s) > \f{1}{2},  \quad a(0) = 1.
\]

Solving the above ODE, we yield 
\beq\label{eq:EE_ODE3}
\bal
  E_\Qc ^{-1}(\tau) & \geq a(\tau)^{-1} E_\Qc ^{-1}(0) + \f{1}{2 d} \int_0^\tau a(\tau)^{-1}  a(s)  d s 
\geq \f{1}{2} ( E_\Qc ^{-1}(0) + \f{1}{2 d} \tau  ) ,\\
 E_\Qc (\tau) & \leq  \min( 2 E_\Qc (0), 4 d / \tau ).
\eal
\eeq

Using \eqref{eq:boot_refine}, the above estimates, and $-\f{\e}{16} + \mu_1 E < - \f{\e}{32} = \lam$ \eqref{eq:EE_nota}, we obtain 
\[
 \f{d}{d \tau} E \leq -\lam E + C ( E_\Qc   + H^{p-1}(0) e^{-\tau / 2} ).
\]

Solving the ODE and using \eqref{eq:EE_ODE3}, we obtain 
\[
E(\tau) \leq e^{-\lam \tau} E(0) 
+ C \int_0^\tau e^{-\lam(\tau-s)} ( \min( E_\Qc (0) , \f{1}{s}) + H^{p-1}(0) e^{-s / 2} ) d s  ,
\]
where $C$ is some absolute constant and can depend on $\e, \lam$. Since $\lam < 1/2$, by decomposing the integral into $s < \tau / 2$ and $s \geq \tau/2$, we obtain 
\beq\label{eq:EE_E}
\bal
E(\tau) & \leq e^{-\lam \tau} ( E(0) + C H^{p-1}(0) )
+ C \B( E_\Qc (0) \int_0^{\tau/2} e^{-\lam(\tau- s)} d s +  \int_{\tau/2}^\tau e^{-\lam(\tau-s)} \min(E_\Qc (0), 1 / \tau) d s \B) \\
& \leq e^{-\lam \tau/2} (E(0) + \mu_3 H^{p-1}(0) +\mu_3  E_\Qc (0)) + \mu_3  \min(E_\Qc (0), 1 / \tau)
\eal
\eeq
for some absolute constant $\mu_3 > 0$.

Plugging the above estimates and \eqref{eq:EE_ga} into Lemma \ref{lem1}, and using $E \les 1$ \eqref{eq:boot_refine}, we get for some $\mu_4>0$:
\beq\label{eq:EE_cw}
|c_W(\tau)| < C ( E_\Qc (\tau) + E_\Qc (\tau) E(\tau) + H^{p-1}(\tau) ) 
< \mu_4 ( \min( E_\Qc (0), 1/ \tau) + H^{p-1}(0) e^{-\tau/2}).
\eeq

\noindent 
\textbf{Continuation of the bootstrap assumptions.} For initial data satisfying 
\beq\label{eq:cond_init1}
E(0) < E_*, \quad  E_\Qc (0) < E_*, \quad  H^{p-1}(0) < E_* ,
\eeq
with $E_*$ sufficiently small, we obtain from \eqref{eq:EE_ODE3}, \eqref{eq:EE_E}, \eqref{eq:EE_cw}  the following estimates 
\[
\bal
E(\tau) & \leq e^{-\lam \tau/2} E_* (1 + 2\mu_3) + \mu_3 \min(E_* , 1/\tau) < E_*(1 + 3 \mu_3),\quad 
E_\Qc (\tau)  < 2 E_* , \quad  |c_W| < \mu_4 E_* , \\
H^{p-1}(\tau) & \leq H^{p-1}(0) < E_*,
\quad E(\tau) + \tr(\Qc) + H^{p-1} < (4 + 3 \mu_3 ) E_*. 
\eal
\]

Therefore, there exists $\nu_3 >0$ such that for $E_* < \nu_3$, the bootstrap assumption \eqref{eq:boot_refine} can be strengthened and continued. Plugging the above estimates into \eqref{max-g1}, \eqref{max-l1}  we obtain 
\[
A_0 \geq \mu_{U,2} \e_2 - \mu_{U, 3}(  3 \mu_3 + 4 ) E_*,
\quad \cP_U F \leq ( \mu_{U,2} (4 + 3 \mu_3) E_* - \mu_{U,2} \e_2 ) F,
\quad U \rho \geq 4 C_b- \mu_{U,3} (1 + 3 \mu_3) E_*.
\]

By further requiring $E_*$ to be sufficiently small, the Assumption \ref{ass:lower}  can be strengthened and continued. The $L^{\infty}$ estimate in Section \ref{sec:linfty} 
strengthens \eqref{eq:boot_U} in the Assumption \ref{asss}. Using the definition \eqref{eneq} and the above estimate for $E$, we obtain 
\[
(E_0 + E_K + F_1 + F_K)(t) \leq C(\nu_1, \nu_2) E_*,
\]
which strengthens the first inequality in \eqref{eq:boot_EE} in Assumption \ref{asss} by further choosing $E_*$ to be small enough. 

For the second inequality in \eqref{eq:boot_EE}, applying the Jacobi's formula 
$ \f{d}{d \tau} \det( \Qc(\tau)) = \det (\Qc(\tau)) \tr( \Qc^{-1} \f{d}{d\tau} \Qc)$ to \eqref{norm_p} and using $\tr(A B) = \tr(B A), \Qc = \Qc_u +\Qc_d + \Qc_u^T$, we obtain 
\[
 \pa_{\tau} \det(\Qc)  = \det(\Qc) \cdot \tr(  - \Qc + O(\cE_{0}) ) .
\]
From the above estimates,  $|\Qc|$ and $\cE_0$ remain uniformly bounded for all $\tau>0$. Since $\det(\Qc(0)) > 0$, we prove $\det(\Qc) \geq \det(\Qc(0)) e^{-C \tau}$, which 
strengthens the second inequality in \eqref{eq:boot_EE}. This concludes the proof of Theorem \ref{thm:stab}.

\section{Refined asymptotics} \label{sec:refineasym}

In this section, building on Theorem \ref{thm:stab}, we obtain sharp asymptotics stated in Theorem \ref{thm:blp}. 
In Section \ref{sec:asym_amp}, we estimate the sharp blowup rates for the amplitude similarly as in \cite{HNWarXiv24}. In Section \ref{sec:asym_phase}, we estimate the asymptotics related to the phase and prove $L^{\infty}$ convergence. 
In Section \ref{sec:thm_proof}, we combine Theorem \ref{thm:stab}, Propositions \ref{prop:asym} and \ref{prop:linf} to prove Theorem \ref{thm:blp}.

\subsection{Asymptotics of the amplitude and blowup rate}\label{sec:asym_amp}

We use $O_{in}$ and $C_{in}$ to track any constant depending on the norm of the initial data $\tr(\Qc(0)), \tr(\Qc^{-1}(0))$. We have the following results for the asymptotics.
\begin{proposition}\label{prop:asym}
Suppose that the initial data $(U, \Th, \Qc, H)$ satisfy the assumption in Theorem \ref{thm:stab}. We have the following asymptotics for the {modulation parameters}
\beq\label{eq:rate_asym1}
\B| \f{H(\tau)^{p-1}}{T-t(\tau) } - 1 \B| \les C_{in} \la \tau \ra^{-1}, 
\quad \lim_{\tau \to \infty} \f{\tau}{|\log( T-t(\tau))|}
= 1,
\quad \lim_{t\to T}\frac{\RR(t)}{\sqrt{(T-t)|\log(T-t)|}}= I_d .
\eeq

\end{proposition}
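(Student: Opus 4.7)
The plan is to derive the asymptotics of the three quantities $H^{p-1}$, $T-t(\tau)$, and $\RR$ in three successive steps, taking as input the matrix ODE of Lemma \ref{lem1} and the quantitative decay estimates from Theorem \ref{thm:stab}.

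First, I would refine the analysis of $\Qc$. Recall from \eqref{eq:EE_ODE} that $\Qc_\tau = -(A\Qc + \Qc A^T) + O(\cE_0|\Qc|)$ with $A := \Qc_u + \tfrac{1}{2}\Qc_d$, and note that since $\Qc$ is symmetric one has $A + A^T = \Qc$. A direct trace computation using $2\tr(A\Qc) = \tr((A+A^T)\Qc) = \tr(\Qc^2)$ gives $\tfrac{d}{d\tau}\tr(\Qc) = -\tr(\Qc^2) + O(\cE_0|\Qc|)$, consistent with Remark \ref{rmk1}. For the inverse, $\tfrac{d}{d\tau}\Qc^{-1} = -\Qc^{-1}\Qc_\tau\Qc^{-1}$ combined with the identity $\tr(\Qc^{-1}(A+A^T)) = \tr(\Qc^{-1}\Qc) = d$ yields
\[
\frac{d}{d\tau}\tr(\Qc^{-1}) = d + O\bigl(\cE_0\,\tr(\Qc^{-1})\bigr).
\]
Integration, combined with the smallness $\cE_0 \lesssim |\Qc|E + H^{p-1}$ inherited from Theorem \ref{thm:stab}, gives $\tr(\Qc^{-1}) = d\tau(1+o(1))$. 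The upper bound $\tr(\Qc)\leq 4d/\tau$ together with the Cauchy--Schwarz inequality $\tr(M)\tr(M^{-1})\geq d^2$ then pins $\tr(\Qc) = (d/\tau)(1+o(1))$. To upgrade these scalar identities to the matrix-level statement $\Qc^{-1}/\tau \to I_d$, I would monitor the isotropy defect $\tr(\Qc^2) - \tr(\Qc)^2/d$ (respectively $\tr(\Qc^{-2}) - \tr(\Qc^{-1})^2/d$) via an analogous ODE, again using $A+A^T=\Qc$ to extract a contractive leading term that forces the spectrum of $\Qc$ to cluster around $\tau^{-1}$.

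Next, I would analyze $H^{p-1}$. From $(\log H)_\tau = -\frac{1}{p-1} + c_W$ \eqref{eq:intro_Pv} and $c_W = O(\tr(\Qc)) + O(\cE_0) = O(1/\tau)$ by Lemma \ref{lem1} and Step~1, write $H^{p-1}(\tau) = e^{-\tau}g(\tau)$ with $g(\tau) := H^{p-1}(0)\exp\bigl((p-1)\int_0^\tau c_W(s)\,ds\bigr)$; this factor is slowly varying since $g'/g = (p-1)c_W = O(1/\tau)$. Integration by parts then gives
\[
T - t(\tau) = \int_\tau^\infty e^{-s}g(s)\,ds = H^{p-1}(\tau) + \int_\tau^\infty e^{-s}g'(s)\,ds = H^{p-1}(\tau) + O(1/\tau)\,(T-t(\tau)),
\]
which rearranges to $|H^{p-1}(\tau)/(T-t(\tau)) - 1| \lesssim C_{in}/\tau$, the first claim. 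Taking logarithms and using $\log g(\tau) = O(\log\tau)$ gives $-\log(T-t(\tau)) = \tau + O(\log\tau)$, hence $\tau/|\log(T-t(\tau))| \to 1$.

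Finally, for $\RR$, the definition $\Mc = e^{-\tau/2}\RR^{-1}$ in \eqref{eq:Pv} combined with $\Qc = C_W^{p-1}\Mc\Mc^T = H^{p-1}e^\tau\Mc\Mc^T$ produces the algebraic identity $\RR^T\RR = H^{p-1}\Qc^{-1}$. Dividing by $(T-t)|\log(T-t)|$ and applying Steps~1 and 2, the right-hand side tends to $I_d$. Since $\RR$ is upper triangular with positive diagonal, the Cholesky map $R \mapsto R^T R$ is a homeomorphism onto symmetric positive definite matrices, so this convergence transfers to $\RR/\sqrt{(T-t)|\log(T-t)|} \to I_d$. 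The main obstacle is the matrix-level refinement within Step~1: the non-normal commutator piece $\tfrac{1}{2}[A-A^T,\Qc]$ appearing in the ODE for $\Qc$ tries to generate anisotropy in the off-diagonal entries, and one must show that the isotropizing dissipation $-\Qc^2$ dominates it with sufficient margin to preserve the logarithmic precision required by Step~3.
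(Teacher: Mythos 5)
Your handling of the first two limits is fine (writing $H^{p-1}=e^{-\tau}g(\tau)$ with $g'/g=(p-1)c_W=O(1/\tau)$ and integrating by parts in $T-t(\tau)=\int_\tau^\infty e^{-s}g(s)\,ds$ is even a bit cleaner than the paper's expansion, which is sharper only because the coefficient $\mu_5/\tau$ is needed later for the phase), and the identity $\RR^T\RR=H^{p-1}\Qc^{-1}$ together with continuity of the Cholesky factorization is a legitimate soft substitute for the paper's entrywise estimates of $M_Q=H^{(p-1)/2}\RR^{-1}$ --- \emph{provided} you know $\Qc=\tau^{-1}I_d+o(\tau^{-1})$ as a matrix. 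That matrix statement, which you yourself flag as ``the main obstacle,'' is exactly where the gap is, and it is the whole content of the third limit in \eqref{eq:rate_asym1}.

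Two concrete problems. First, a scalar slip: $\tr(\Qc)\le 4d/\tau$ combined with $\tr(\Qc)\tr(\Qc^{-1})\ge d^2$ does \emph{not} pin $\tr(\Qc)=(d/\tau)(1+o(1))$; it only gives $\frac d\tau(1+o(1))\le\tr(\Qc)\le\frac{4d}{\tau}$. The sharp upper bound requires integrating the Riccati inequality $\pa_\tau\tr(\Qc)\le-\frac1d\tr(\Qc)^2+\dots$, i.e.\ $\pa_\tau(\tr\Qc)^{-1}\ge\frac1d-\dots$, as the paper does; this is easy to repair but must be done. Second, and more seriously, the isotropy-defect ODE you propose does not obviously close. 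Setting $D=\tr(\Qc^2)-\tr(\Qc)^2/d=\frac1d\sum_{i<j}(\lam_{\Qc,i}-\lam_{\Qc,j})^2$, the ODE for $\Qc$ with $A+A^T=\Qc$ gives $D_\tau\le -4\lam_{\min}(\Qc)\,D+O(\cE_0|\Qc|^2)$, and a priori one only has $\lam_{\min}(\Qc)\gtrsim 1/(d\tau)$, so the damping coefficient is of size $4/(d\tau)$; integrating yields $D\lesssim\tau^{-4/d}+\tau^{-3}$, which for $d\ge2$ fails to reach the threshold $D=o(\tau^{-2})$ needed to conclude that the eigenvalues cluster at scale $o(1/\tau)$ (and the damping only improves once clustering is already known, so the argument is circular as stated). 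The paper avoids any such ODE: having the two matched bounds $\tr(\Qc)\le\frac d\tau+O_{in}(\log\tau/\tau^{2})$ and $\tr(\Qc^{-1})\le d\tau+O_{in}(\log\tau)$, the product nearly saturates $\tr(\Qc)\tr(\Qc^{-1})\ge d^2$, and the exact identity $\tr(\Qc)\tr(\Qc^{-1})=d^2+\sum_{i<j}\big(\sqrt{\lam_{\Qc,i}/\lam_{\Qc,j}}-\sqrt{\lam_{\Qc,j}/\lam_{\Qc,i}}\big)^2$ then forces every eigenvalue ratio to equal $1+O_{in}((\log\tau/\tau)^{1/2})$; combined with \eqref{trq}, \eqref{trq-1} and the symmetry of $\Qc$ this gives $\Qc=\tau^{-1}I_d+O_{in}(\tau^{-3/2}\sqrt{\log\tau})$, which is what your Step 3 needs. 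You already have every ingredient of this saturation argument in hand; replacing the defect ODE by it closes the gap.
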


{
We consider $\tau \geq 2$. 
Note that $E_* < 1$. We focus on the asymptptics as $\tau \to \infty$ and the decay rate in $\tau$. 

}

\paragraph{Refined estimate of $\Qc$.}
By inserting \eqref{eq:EE_E} and \eqref{cUUU} into \eqref{eq:EE_ODE2}, we get 
$$
\pa_{\tau} E_\Qc  \leq - \f{1}{d}  E_\Qc ^2+ C((\frac{1}{\tau}+  e^{-\lambda\tau/2}  )E_\Qc ^2+  E_\Qc e^{-\tau/2})\,,
$$
for some absolute constant $C>0$. 
Since $E_Q > 0$, we arrive at the ODE 
$$
\pa_{\tau} E_\Qc ^{-1} \geq  \f{1}{d}  - C (  \frac{1 }{\tau}+e^{-\lambda\tau/2}  )- C  E_\Qc^{-1}e^{-\tau/2}\,.
$$

{
  By introducing the integrating factor $a(\tau) = \exp( -C E_* \int_1^{\tau} e^{-s /2 } d s )$, and using the fast convergence $|a(\tau) / a(s) - 1| \les E_* e^{-s/2}, a(\tau) \geq e^{-C E_*} $ for $1\leq s < \tau$, we can solve the above ODE and obtain 
}
$$
{E_\Qc ^{-1}}\geq \frac{1}{d}\tau+O(\log\tau) 
+ E_{\Qc}^{-1}(2) e^{- C E_*} 
\geq \frac{1}{d}\tau + O_{in}(\log \tau).
$$

Since $\tr(\Qc) = \sum \lam_{\Qc, i}$, 
we know that 
\beq\label{trq}
\min(\lambda_{\Qc,i})\leq\frac{1}{d}E_\Qc \leq \frac{1}{\tau}+O_{in}(\frac{\log\tau}{\tau^2}  ).
\eeq

Next we estimate $\text{tr}(\Qc^{-1})$. From \eqref{eq:EE_ODE}, we have by \eqref{eq:EE_ga} that 
$$
\pa_{\tau}\text{tr}(\Qc^{-1})=d-2\text{tr}(\cE_\Qc  \Qc^{-1})\leq d+\mu_2(EE_\Qc +H^{p-1})\text{tr}(\Qc^{-1}).
$$
By the above estimates of $E_\Qc $, and the same estimates of $E$ and $H^{p-1}$ in \eqref{eq:EE_E} and \eqref{cUUU}, we have that for sufficiently large $\tau$, there exists a $\mu_5$ such that 
$$
\pa_{\tau}\text{tr}(\Qc^{-1})\leq d+\frac{C}{\tau^2}\text{tr}(\Qc^{-1}).
$$
We conclude that 
$$
\tr(\Qc^{-1}) \leq {d}\tau+O(\log\tau) + \tr(\Qc^{-1}(2))
\leq {d}\tau+O_{in}(\log\tau) 
$$
Using $\tr(\Qc^{-1}) = \sum_i \lam_{\Qc, i}^{-1}$, we obtain
\beq\label{trq-1}
\max(\lambda_{\Qc,i})\geq\frac{d}{\text{tr}(\Qc^{-1})}\geq \frac{1}{\tau}+O_{in}(\frac{\log\tau}{\tau^2}).
\eeq

Combining the above estimates, we obtain 
$$
{\text{tr}(\Qc^{-1})}\text{tr}(\Qc)\leq d^2+O_{in}(\frac{\log\tau}{\tau}).
$$ 
Using $\tr(\Qc^{\al}) = \sum \lam_{\Qc, i}^{\al}, \al = 1, -1$, we derive
$$
\text{tr}(\Qc^{-1})\text{tr}(\Qc)=\sum \lambda_{\Qc,i}\sum \lambda^{-1}_{\Qc,i}=d^2+\sum_{i<j}(\sqrt{\frac{\lambda_{Q,i}}{\lambda_{\Qc,j}}}-\sqrt{\frac{\lambda_{\Qc,j}}{\lambda_{\Qc,i}}})^2.
$$
It follows 
$$\Big(\sqrt{\frac{\lambda_{\Qc,i}}{\lambda_{\Qc,j}}}-\sqrt{\frac{\lambda_{\Qc,j}}{\lambda_{\Qc,i}}}\Big)^2=O_{in}\Big(\frac{\log\tau}{\tau}\Big), \quad \forall i < j, 
\quad  \f{\max(\lambda_{\Qc,i})}{\min(\lambda_{\Qc,i})}=1 + O_{in}( (\frac{\log\tau}{\tau})^{ 1/2 } ).
$$
Combining the above estimate with \eqref{trq} and \eqref{trq-1}, we have that  each one of the eigenvalue satisfies 
$$
\lambda_{\Qc,i}=\frac{1}{\tau}+ O_{in}(\tau^{-3/2}\sqrt{\log \tau} )
= \frac{1}{\tau} +  O_{in}(a_{\tau}),\quad a_{\tau} = \tau^{-3/2 + \e_3},
\quad \e_3 = \f{1}{10}.
$$

{
  Since $\Qc$ is symmetric and $\Qc(\tau) = R(\tau) \Lam R(\tau)^T$ for $\Lam = \mathrm{diag}(\lam_{\Qc, 1}, .., \lambda_{\Qc, d})$ and some orthogonal matrix $R$, which satisfies 
 $| R(\tau)| \leq C$ for $C$ independent in $\tau$, the above estimates further imply, 
 \beq\label{eq:asymp_Q}
 \Qc= R(\tau) \B( \f{1}{\tau} I_d + O_{in}(a_{\tau})  \B) R(\tau)^T =  \frac{1}{\tau}I_d+O_{in}(a_{\tau}) .
 \eeq
}

\paragraph{Estimate of $\RR$ and blowup rate.}
Recall from \eqref{per_ansatz}, \eqref{eq:Pv} 
\[
\Mc = e^{- \tau / 2}
\RR^{-1}, \quad 
\Qc = C_W^{p-1} \Mc \Mc^T = C_W^{p-1} e^{-\tau/2}  \RR^{-1} ( e^{-\tau/2} \RR^{-1})^T =
M_Q M_Q^T, \quad M_Q : = {C}_U^{(p-1)/2} \RR^{-1}.
\]
Note that $\RR, \Mc,  M_Q$ are upper triangular matrices. Due to $M_{Q, ii}(0) > 0$ and 
the non-degeneracy $ 0 < \det(\Qc) = \det(M_Q)^2 = \prod M_{Q, ii}^2 $ for all $\tau$ from \eqref{eq:boot_EE}, by continuity, we have $M_{Q, ii}(\tau) > 0$, which are the eigenvalues of $M_Q, M_Q^T$. For each real eigenpair $(\lam, v)$ of $M_Q^T$ with $ || v||_{l^2}^2 = 1$, we obtain 
\[
\lam^2 = \lam^2 || v||_{l^2}^2 =  v^T M_Q M_Q^T v = v^T \Qc v 
= \tau^{-1} || v||_{l^2}^2 + O_{in}( a_{\tau} )
= \tau^{-1} +  O_{in}( a_{\tau} ).
\]

Since $M_{Q, ii} > 0$ is a eigenvalue of $M_Q$, we obtain 
\[
M_{Q, ii} =  \tau^{-1/2} ( 1 + \tau O(a_{\tau}))^{1/2}
= \tau^{-1/2} + O( \tau^{1/2} a_{\tau})
= \tau^{-1/2} + O( (\log \tau)^{1/2} / \tau).
\]

Next, we estimate the strictly upper part of $M_Q$: $M_{\Qc}^u$. Taking trace, we get 
\[
 \sum_{i \neq j} M_{Q, ij}^2
= \tr( M_Q M_Q^{T}) - \sum_i M_{Q, ii}^2
 = \tr(\Qc) -\sum_i M_{Q, ii}^2 = d /\tau - d / \tau + O(a_{\tau}) = O(a_{\tau})
\]
which implies $M_{\Qc}^u = O( a_{\tau}^{1/2})$.
Comparing the strictly upper part $\Qc = (M_{Q,d} + M_{Q}^u) 
  (M_{Q,d} + M_{Q}^u)^T$, we get 
\[
  M_{Q}^u  M_{Q, d}  =   \Qc^u -  (M_{Q, u} M_{Q, u}^T )^u
  = O(a_{\tau}), \quad  M_Q^u =  O( a_{\tau}  ) M_{Q, d}^{-1}
  =  O( a_{\tau}  \tau^{1/2} ) .
\]

Therefore, we conclude,
\beq\label{eq:asym_M}
{C}_U^{(p-1)/2} \RR^{-1} = M_Q=\frac{1}{\sqrt{\tau}}I_d+ O_{in}( \tau^{1/2} a_{\tau})
= \frac{1}{\sqrt{\tau}}I_d+ O_{in}( \tau^{-1 +\e_3}) .
\eeq




{
Using \eqref{eq:dyn_scal1}, we define the blowup time as $T = t(\infty)$. Using \eqref{eq:dyn_scal1} for $t(\tau)$, Lemma \ref{lem1} and \eqref{eq:boot_refine} for $c_W$, and \eqref{eq:EE_cw}, \eqref{eq:non_stab}, \eqref{eq:asymp_Q} for $\tr(\Qc), \cE_0$, we obtain 
\begin{equation}\label{tau-t}
t_{\tau} = H^{p-1}, 
\quad (p-1) c_W =  \f{\mu_5}{\tau}  + O_{in}(a_{\tau}),  
\quad |(p-1) c_W| < \f{1}{2}, \quad a_{\tau} = \tau^{-3/2 + \e_3}, 
 \quad \mu_5 = \f{ 2(1 - \be \delta) d c_p}{p-1} .
\end{equation}

Using \eqref{eq:dyn_scal1}, \eqref{per_ansatz}, for $\tau \geq 2, s>0$, we obtain
\[
\f{H^{p-1}(\tau +s) }{ H^{p-1}(\tau) }
= e^{-s} F(\tau, s), 
\quad F(\tau, s) := e^{ (p-1) \int_{\tau}^{\tau+s} c_W(z) d z }. 
\]

Since $|(p-1)c_W(s)| < \min( \f{1}{2}, C  \tau^{-1})$, using $|e^x - 1| \les |x| (e^x + 1),
\pa_s F(\tau, s) = (p-1) c_W(\tau + s) F(\tau, s), F(\tau, 0)=1$, for $0\leq z  \leq s$,  we obtain 
\[
\bal
 |F(\tau , s) -1| & \les s / \tau ( F(\tau, s) + F(\tau,0)) \les e^{s/2} s / \tau, \\
 |c_W(\tau+z) - \mu_5 \tau^{-1} |
& \les | (\tau + z)^{-1} - \tau^{-1}| + a_{\tau}
\les z \tau^{-2} + a_{\tau}, 
\eal
\]
which implies 
\[
\bal
 | c_W(\tau + z) F(\tau, z) - \mu_5 \tau^{-1}|
& \les | c_W(\tau + z) ( F(\tau, z) - 1) + (c_W(\tau+z) - \mu_5 \tau^{-1})|
\les a_{\tau} + e^{z/2} z \tau^{-2} + z\tau^{-2} , \\
 |F(\tau, s) - 1  - s \mu_5 \tau^{-1}| 
&\les s \max_{0\leq z \leq s}  | c_W(\tau + z) F(\tau, z) - \mu_5 \tau^{-1}|
\les e^{s/2} s^2 \tau^{-2} + s \tau^{-3/2 + \e_3}.
\eal
\]

Therefore, integrating $\f{H^{p-1}(\tau +s) }{ H^{p-1}(\tau) }$  for $s$ from $0$ to $\infty$ and using the above estimates, we get 
\beq\label{eq:t_tau}
\bal
\f{T-t(\tau) }{H^{p-1}(\tau)}
 & = \int_{0}^{\infty} \f{H^{p-1}(\tau +s) }{ H^{p-1}(\tau) } d s
= \int_{0}^{\infty} e^{-s} F(\tau, s) d s
=  \int_0^{\infty}  (1 + \mu_5 s \tau^{-1}) e^{-s} d s 
+O_{in}(\tau^{-3/2 + \e_1}) \\
& = 1 + \mu_5 \tau^{-1} +O_{in}(\tau^{-3/2 + \e_1}),
\eal
\eeq
where we use $\int_0^{\infty} s e^{-s} ds=1$. Since $(p-1) c_U = (p-1) (\bar c_U + c_W ) = - 1 + O(\tau^{-1})$, we further obtain 
\beq\label{eq:t_tau2}
\log ( T-t(\tau))
= (1 + O(\tau)^{-1}) \log( H^{p-1})
= (1 + O(\tau)^{-1} ) ( O_{in}( 1 ) + \int_0^{\tau} (p-1) c_U ) 
= - \tau + O_{in}(\log(\tau)) .
\eeq

Combining \eqref{eq:asym_M}, \eqref{eq:t_tau}, \eqref{eq:t_tau2}, we prove \eqref{eq:rate_asym1} and Proposition \ref{prop:asym}.
}


\subsection{Asymptotics of phase and $L^{\infty}$ convergence}\label{sec:asym_phase}

{
In this section, our goal is to prove the following convergence.

\begin{proposition}\label{prop:linf}

Suppose that the initial data $(U, \Th)$ satisfy the assumption in Theorem \ref{thm:stab}. We have 
\beq\label{eq:linf_conv}
| U(z, \tau) e^{ \imath (\Th -  A(\tau))  } - \bar U^{1 + \imath \d} |   \les 
 \min( \la z\ra^{\s +\e/2} \la \tau \ra^{-\e}, \la \tau \ra^{\max(-1, 2 \s)} ).
\eeq
where $A(\tau)$  satisfies the following estimate for some constant $C_{in}$ depending on the initial data
\beq\label{eq:phi0_asym1}
|A -  \bar A| \leq C_{in} ,
\quad 
A(\tau) := \frac{\delta}{p-1}\tau+\Phi(\tau, 0) ,
\quad 
\bar A(\tau):= - \frac{\delta \log (T-t(\tau))}{p-1}-\frac{d\beta(1+\delta^2)\log|\log (T-t(\tau))|}{2 
\flat_*} .
\eeq

\end{proposition}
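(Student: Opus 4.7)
The plan is to reduce \eqref{eq:linf_conv} to decay of $W$ and of $\Phi(\cdot,\tau)-\Phi(0,\tau)$, then to analyse the scalar ODE satisfied by $\Phi(0,\tau)$, and finally to match its $\log\tau$ asymptotic with a one-order refinement of $\log(T-t(\tau))$ beyond Proposition \ref{prop:asym}. Using $A-\frac{\delta}{p-1}\tau=\Phi(0,\tau)$ and $\bar\Theta-\frac{\delta}{p-1}\tau=\delta\log\bar U$, I write
\begin{equation*}
U e^{i(\Theta-A)}-\bar U^{1+i\delta}=\bar U^{i\delta}\Bigl[\bar U\bigl(e^{i(\Phi-\Phi(0,\tau))}-1\bigr)+W e^{i(\Phi-\Phi(0,\tau))}\Bigr],
\end{equation*}
so the pointwise error is bounded by $|W|+\bar U\min(|\Phi-\Phi(0,\tau)|,2)$. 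Theorem \ref{thm:stab} gives $E(\tau)\lesssim 1/\tau$; then \eqref{eq:inte:3} with $l=0$ yields $|W|\lesssim \langle z\rangle^{\sigma+\epsilon/2}\tau^{-1}$, and writing $\Phi(z,\tau)-\Phi(0,\tau)=\int_0^1 z\cdot\nabla\Phi(sz,\tau)\,ds$ and applying \eqref{eq:inte:4} with $l=1$ gives $|\Phi(z,\tau)-\Phi(0,\tau)|\lesssim \langle z\rangle^{1/2}F_{d+1}\lesssim \langle z\rangle^{1/2}/\tau$. Interpolating $\bar U\min(|\Phi-\Phi(0,\tau)|,2)\lesssim \langle z\rangle^{\sigma+(1-\alpha)/2}F_{d+1}^{1-\alpha}$ with $\alpha=1-\epsilon$ delivers the first branch $\langle z\rangle^{\sigma+\epsilon/2}\tau^{-\epsilon}$, while choosing $\alpha=\max(0,1+2\sigma)$ makes the $\langle z\rangle$-power nonpositive and yields the uniform branch $\tau^{\max(-1,2\sigma)}$.

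For $\Phi(0,\tau)$, I evaluate \eqref{imaginep} at $z=0$. The transport term vanishes; $\mathscr N_\Theta(0)=0$ by the normalization $U(0)=\kappa_0=\bar U(0)$; and $\mathscr F_\Theta(0)=-\Vc\cdot\nabla\Phi(0)$ is $O(\tau^{-2})$ via Lemma \ref{lem1} and \eqref{eq:inte:4}. For $\mathscr D_\Theta(0)$, plugging $\nabla U(0)=0$, $\nabla^2 U(0)=\kappa_2 I_d$, $\nabla\bar\Theta(0)=0$, $\nabla^2\bar\Theta(0)=\delta(\kappa_2/\kappa_0)I_d$ and the identity $\kappa_2/\kappa_0=-1/(2\flat_*)$ into \eqref{im_v}, with the $\nabla\Phi(0)$- and $\nabla^2\Phi(0)$-dependent pieces absorbed as $O(\tau^{-2})$, gives
\begin{equation*}
\partial_\tau\Phi(0,\tau)=-\frac{\beta+\delta}{2\flat_*}\,\mathrm{tr}(\Qc)+O_{in}(\tau^{-3/2+\epsilon_3}).
\end{equation*}
Substituting $\mathrm{tr}(\Qc)=d/\tau+O_{in}(\tau^{-3/2+\epsilon_3})$ from Proposition \ref{prop:asym} and integrating in $\tau$ produces
\begin{equation*}
\Phi(0,\tau)=-\frac{(\beta+\delta)d}{2\flat_*}\log\tau+C_0+O_{in}(\tau^{-1/2+\epsilon_3})
\end{equation*}
for some constant $C_0$ depending only on the initial data.

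To compare $A$ with $\bar A$ I refine Proposition \ref{prop:asym} one order: integrating $(p-1)c_W=\mu_5/\tau+O(\tau^{-3/2+\epsilon_3})$ with $\mu_5=2(1-\beta\delta)dc_p/(p-1)$ gives $\log H^{p-1}(\tau)=-\tau+\mu_5\log\tau+O_{in}(1)$, and then \eqref{eq:t_tau} yields $\log(T-t(\tau))=-\tau+\mu_5\log\tau+O_{in}(1)$ together with $\log|\log(T-t(\tau))|=\log\tau+O(\tau^{-1}\log\tau)$. Plugging these into $\bar A$, using $c_p=(p-1)^2/(4\flat_*)$ and the algebraic identity $\delta(1-\beta\delta)+\beta(1+\delta^2)=\beta+\delta$, the coefficient of $\log\tau$ in $\bar A$ collapses to exactly $-(\beta+\delta)d/(2\flat_*)$, matching the coefficient in $\Phi(0,\tau)$, and the remaining constants give $|A-\bar A|\leq C_{in}$. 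The main obstacle is precisely this matching: the raw leading coefficient in $\Phi(0,\tau)$ is $-(\beta+\delta)d/(2\flat_*)$, whereas $\bar A$ carries the factor $-d\beta(1+\delta^2)/(2\flat_*)$ in front of $\log|\log(T-t)|$, so the two reconcile only after the nontrivial sub-leading correction $\mu_5\log\tau$ in $\log(T-t(\tau))$ arising from $c_W$ is included and the above identity is applied; overlooking this refinement would produce the wrong constant.
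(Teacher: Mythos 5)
Your proposal is correct and follows essentially the same route as the paper: the same decomposition $Ue^{\imath(\Th-A)}-\bar U^{1+\imath\d}=\bar U^{\imath\d}\big[We^{\imath(\Phi-\Phi(0))}+\bar U(e^{\imath(\Phi-\Phi(0))}-1)\big]$ with the embeddings \eqref{eq:inte:3}, \eqref{eq:inte:4} and an interpolation in the exponent for the two branches of the minimum, and the same ODE $\partial_\tau\Phi(0,\tau)=-\frac{\beta+\delta}{2\flat_*}\tr(\Qc)+O(\cE_0)$ combined with $\tr(\Qc)=d/\tau+O_{in}(\tau^{-3/2+\e_3})$. The only presentational difference is that you compare the integrated quantities $A$ and $\bar A$ (which forces the harmless refinement $\log(T-t(\tau))=-\tau+\mu_5\log\tau+O_{in}(1)$, a direct consequence of \eqref{tau-t} and \eqref{eq:t_tau}), whereas the paper compares $A_\tau-\bar A_\tau$ and shows the $1/\tau$ coefficient vanishes; both hinge on exactly the same cancellation coming from $c_p=(p-1)^2/(4\flat_*)$.
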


}

\begin{proof}

We will first estimate $\Phi(0)$ and then prove convergence. 

\paragraph{Estimate of $\Phi(0)$.}

Firstly, we compute $A_{\tau}, \bar A_{\tau}$. Using \eqref{imaginep}, we perform similar computations to the proof of Lemma \ref{lem1} to compute that 
$$
\Phi_\tau(0)=-v\cdot\nabla\Phi(0)+\mathscr{D}_\Theta(0)= O( \cE_0)+(\beta+\delta)\frac{\kappa_2}{\kappa_0}\text{tr}(Q)\,.
$$

Applying \eqref{eq:asymp_Q} and the estimates \eqref{eq:non_stab}, \eqref{eq:EE_ga} for $\cE_0$, we yield
\[
\bal
 & A_{\tau}(\tau)=\frac{\delta}{p-1}-\frac{d(\beta+\delta)}{2 \flat_* \tau}+
 O_{in}(\tau^{-3/2 + \e_1}), \quad 
  \bar A_{\tau}(\tau)=\frac{\delta}{p-1}\frac{t_\tau}{T-t}-\frac{d\beta(1+\delta^2)t_\tau}{2 \flat_* |\log (T-t)|(T-t)}\,.
\eal
\]

{
Using $t_{\tau} = H^{p-1}$  \eqref{tau-t}, \eqref{eq:t_tau}, and \eqref{eq:t_tau2}, we yield 
\[
 \f{t_{\tau}}{ T- t(\tau)}
 = \f{H^{p-1}}{T- t(\tau)}
  = 1 - \f{\mu_5} {\tau} + O_{in}(\tau^{-3/2 + \e_1}),  \quad
   \f{t_{\tau}}{ (T-t)\log( T- t)}
   = \f{1}{\tau} + O_{in}(\log(\tau) \tau^{-2}).
\]
} 
Using the definition of $\mu_5$ \eqref{tau-t}, $c_p$ \eqref{ss-}, we conclude
\begin{align*}
A_{\tau} - \bar A_{\tau} &=  ( - \f{ d( \beta + \d)}{2 \flat_*} + \f{\mu_5 \d}{p-1} 
 + \f{ d \beta(1 + \d^2) }{2 \flat_*} ) \f{1}{\tau} + O_{in}(\tau^{- \f{3}{2} + \e_3})\\
 &= \f{\d}{\tau}( \f{2 (1 - \beta \d) d c_p }{(p-1)^2} - \f{d (1-\be \d)}{2 \flat_*} ) 
 + O_{in}(\tau^{- \f{3}{2} + \e_3}).
\end{align*}
The first term vanishes due to \eqref{ss-} for $c_p$. Since the error term is integrable in $\tau \geq 2$, we conclude the asymptotics of the phase \eqref{eq:phi0_asym1}.




\paragraph{$L^{\infty}$ convergence.}

Recall $ \Th = \bar \Th + \Phi$ \eqref{per_ansatz}. Integrating \eqref{eq:inte:4} with $l=1$, we obtain 
\beq\label{eq:linf_phi}
|\Phi(z) - \Phi(0)| \les \int_0^{1}  |\na \Phi( t z ) | d t 
\les E \int_0^1 \la t z \ra^{-1/2} d t \les E \la z \ra^{1/2}.
\eeq

Using $U = \bar U + W$ \eqref{per_ansatz}, we decompose
\[
J := U e^{ \imath (\Th - \bar \Th - \Phi(0))} - \bar U
 = U e^{ \imath (\Phi - \Phi(0))} - \bar U
 = W e^{ \im (\Phi - \Phi(0))} 
 + \bar U (e^{ \im (\Phi - \Phi(0)) }  - 1) =  I + II.
\]

Appyling \eqref{eq:inte:3} to $I$, \eqref{eq:linf_phi} and $|e^{\im x } - 1| \les \min(|x|, 1)$ to $II$, and $E \les \la \tau \ra^{-1}$ \eqref{eq:non_stab}, we prove
\[
\bal
|J| & \les \la z \ra^{\s + \e/2} E 
+ \la z \ra^{\s} \min( E \la z \ra^{1/2}, 1)
\les \min( \la z \ra^{\s + \e/2} ( E + E^{ \e} )  , E + E^{- 2\s})  \\
& \les 
 \min( \la z\ra^{\s +\e/2} \la \tau \ra^{-\e}, \la \tau \ra^{\max( -1, 2 \s)} ).
\eal
\]
Since $\Th - \bar \Th - \Phi(0) + \d \log \bar U 
= \Th - A(\tau)$ \eqref{ss-} and \eqref{eq:phi0_asym1}, we get $ U^{ \imath \d} J  =   U e^{ \imath (\Th - A(\tau))} - \bar U^{1 + \imath \d} $. Since $| U^{ \imath \d}| = 1$, the above estimate conclude the proof of \eqref{eq:linf_conv}.
\end{proof}

\subsection{Proof of Theorem \ref{thm:blp}}\label{sec:thm_proof}

In this section, we prove Theorem \ref{thm:blp} with the open set $\Oc$ prescribed in Remark 
\ref{rem:blowup}. 

\

\noindent \textbf{Verification of assumptions in Theorem \ref{thm:stab}.} We first choose $\nu < 1$ in \eqref{eq:ass_init2} for Theorem \ref{thm:blp}. Following the proof of Corollary \ref{cor:decay}, we obtain $U_0 \les \la z \ra^{\s + \e/2}$. Using the definitions \eqref{norm:Ek} and \eqref{def:EKFKbar} of $F_K$ and $\bar {\mfr F_K}$, and \eqref{wg:rho}, we get 
\[
 \mr \rho_K = \rho_K U^2 \leq  C \la z \ra^{2 \s + \e}\rho_K \geq C (1 + |z|^{ - d - 2 K } ),\quad
F_K \leq C \|\Phi\|_{\bar {\mfr F_K} }.
\]
From  \eqref{def:EkFk}, \eqref{eneq}, we obtain $E \leq C E_{in}$. 
By choosing $\nu = c E_*$ in \eqref{eq:ass_init2} with $c>0$ sufficiently small, the assumptions \eqref{eq:ass_init2} implies the assumptions \eqref{eq:thm_stab_ass} in Theorem \ref{thm:stab} except for $E_{\Qc} < E_*$. Using the definitions of $\Mc_0, H(0)$ \eqref{eq:init_res} 
$\cR_0, \Qc$ \eqref{eq:Pv},  and $C_W(0) = H(0)$ \eqref{per_ansatz}, we obtain 
\[
\tr(\Qc(0)) = H(0)^{p-1} \tr(\Mc \Mc^T) = 
H(0)^{p-1} \tr( \Mc^T \Mc) =  C u_0(V_0)^{-p} \tr( \na^2 u_0(V_0))
\]
for some absolute constant $C$. Therefore, by further choosing $ c$ small in $\mu = c E_*$, we obtain $|E_{\Qc}| < E_*$ from the last assumption in \eqref{eq:ass_init2}.  
We verify the assumptions in Theorem \ref{thm:stab} and can use the results in Theorem \ref{thm:stab}, and Propositions \ref{prop:asym} and \ref{prop:linf}.

For the time $t$ in Theorem \ref{thm:blp}, we use the change of variables $t = t(\tau)$ \eqref{eq:dyn_scal1}. Then we only need to prove Theorem \ref{thm:blp} in terms of the self-similar time $\tau$.

\

\noindent \textbf{Proof of estimates \eqref{est:asymptoticEKFK}, \eqref{eq:thm_estb}.}
Using Theorem \ref{thm:stab}, we obtain the estimates \eqref{eq:non_stab}, which along with the relation between $(U , \Th), (u, \th)$ and $W = U - \bar U, \Phi = \Th - \bar \Th$ prove \eqref{est:asymptoticEKFK} in Theorem \ref{thm:blp}. 

To obtain \eqref{eq:thm_estb} and \eqref{est:limitmu}, we choose $\mu(t(\tau)) = A(\tau), \hat \mu(t) = A(\tau) - \bar A(\tau) $. Using 
$ \Th - A = \Th - \bar A -  \hat \mu $ and the formula of $\bar A$ \eqref{eq:phi0_asym1}, we obtain  
\[
\bal
J: & = |\log(T-t)|^{\imath  \frac{d\beta (1 + \delta^2)}{2\flat_*}} (T-t)^{\frac{1 + \imath \delta}{p-1}}  \psi( \RR(t) z + \Vc(t) , t )  e^{ -\imath \hat \mu(t) }
= (T-t)^{ \f{1}{p-1} } H^{-1} U(z, \tau) e^{ \imath ( \Th(z, \tau) - \bar A - \hat \mu ) }\\
& = (T-t)^{ \f{1}{p-1} } H^{-1} U(z, \tau) e^{ \imath ( \Th(z, \tau) - A(\tau) ) } .
 \eal
\]

We denote
\[
J_2 =  U(z, \tau) e^{ \imath ( \Th(z, \tau) - A(\tau) ) } .
\]

Using the limits $H(\tau) / (T-t)^{1/(p-1)} \to 1$ and $\tau / |\log(T-t(\tau))| \to 1$ as $\tau \to \infty$ \eqref{eq:rate_asym1},  and the estimate \eqref{eq:linf_conv}, we prove 
\begin{align*}
|J  - \bar U^{1 + i \d}|
&\les | (T-t)^{ \f{1}{p-1} } H^{-1} - 1| \cdot 
 |\bar U^{1 + i \d}| 
 + (T-t)^{ \f{1}{p-1} } H^{-1} |J_2 -\bar U^{1 + i \d} | \\
 &\les C_{in}  (1 + \tau)^{\eta}
 \les C_{in}(1 + |\log( T-t(\tau))| )^{\eta},
\end{align*}
where $\eta = \max(-1, 2 \s)$. 

\ 

\noindent \textbf{Proof of rates \eqref{est:limitHRV}, \eqref{est:limitmu}.} {Next, we show that $V(\tau)$ converges as $\tau \to \infty$ in \eqref{est:limitHRV}. From Lemma \ref{lem1} for $\Vc$, the decay estimates for $\Qc, E$ in Theorem \ref{thm:stab}, and $\RR(\tau) \to 0$ as $\tau \to \infty$ in Proposition \ref{prop:asym}, we obtain 
\[
 | \RR(\tau) \Vc(\tau) | \les (1 + \tau)^{-2}.
\]
Since the upper bound is integrable in $\tau$, using $| \dot V(\tau) | = | \RR(\tau) \Vc(\tau)| $ \eqref{eq:Pv}, we prove that $V(\tau)$ converges as $\tau \to \infty$.} The asymptotics \eqref{est:limitmu} follows from the definition of $\mu$ and \eqref{eq:rate_asym1}.This ends the proof of Theorem \ref{thm:blp}.

\subsection{Proof of Theorem \ref{cor:stab}}\label{sec:cor_pf}

To prove Theorem \ref{cor:stab}, we only need to show that for $\e_0 = \e_0(u_0)$, assumption \eqref{eq:ass_cor} implies that $\td u_0$ is in the open set $\Oc$ in Theorem \ref{thm:blp}. 

\

\noindent  \textbf{Estimates of $\td V_0, \td \Mc_0, \td H_0$.}  Since $V_0$ is the unique maximizer and $\na^2 u_0(V_0) \succ 0$, for $\d_1$ sufficiently small, 
we obtain that $\td u_0$ admits a global non-degenerate maximizer $\td V_0$ close to $V_0$ 
\footnote{Since for any $\d > 0$, there exists a $r > 0$, such that 
$u_0(V_0) > u_0(V_0 + z) + \d, |z| > r$, we obtain $|\td V_0 - V_0| < r$ when $\d_1 < \d / 2$.
}
with $|V_0 - \td V_0| \to 0$ as $||\td u_0 - u_0||_{L^{\infty}} \to 0$. Using embedding \eqref{eq:inte:3} in Proposition \ref{prop:inter}, we obtain 
\[
||u_0 - \td u_0||_{C^2} \les || u_0 - \td u_0 ||_{\cH^K} . 
\]
Denote 
\[
\d_1 := || u_0 - \td u_0 ||_{\cH^K} ,
\quad \d_2 := |V_0 - \td V_0| .
\]
Using continuity and the above embedding, we obtain
\bseq
\beq\label{eq:cor_small1}
\lim_{\e_0 \to 0}  |\td V_0 - V_0| = 
\lim_{\e_0 \to 0}  \d_2 =0, \quad | \td u_0(\td V_0) - u_0(V_0) | \les \d_1 + \d_2, 
\quad | \na^2 \td u_0(\td V_0) - \na^2 u_0(V_0) | \les \d_1  + \d_2. 
\eeq

Upon choosing $\e_0 > 0$ small, we can define the initial {modulation parameters} \eqref{eq:init_res} $\td H_0, \td \Mc_0$ associated with $\td u_0$ and obtain 
\beq\label{eq:cor_small2}
\lim_{\e_0 \to 0} |\td \Mc_0 - \Mc_0 | + | H_0 - \td H_0| = 0.
\eeq

\eseq

We denote by $\td U_0$ the rescaled variables for $\td u_0$: 
\beq\label{eq:cor_pf1}
\td U_0 = \td H_0 \td u_0( \td \Mc_0^{-1} z + \td V_0 ) .
\eeq

\noindent  \textbf{Verification of assumptions.}  We show that $( \td u_0, \td U_0)$ satisfies assumptions \eqref{eq:ass_init2}. The implicit constants can depend on $u_0$.

Firstly, assumptions \eqref{eq:ass_init}, \eqref{eq:ass_init2} for $\td u_0$ except for 
\beq\label{eq:cor_pf2}
\td U_0 \bar U^{-1 - \e_2} > 2 C_b
\eeq
follow from continuity and choosing $\d_1$ small. Condition \eqref{eq:cor_pf2} follows from 
the assumption \eqref{eq:cor_pf2} for $U_0$, \eqref{eq:ass_cor}, the triangle inequality, and choosing $\e_0$ small.

Next, we verify \eqref{def:EKFKbar} for $\td U_0$, i.e. 
 \beq\label{eq:cor_small0}
|| \td U_0 - \bar U_0 ||_{\cE_K} < \nu .
 \eeq
 Using the definition \eqref{eq:cor_pf1}, we decompose $\td U_0 - \bar U$ as follows 
\beq\label{eq:cor_J}
 \td U_0 - \bar U
=  \td H_0 \B(  \td u_0( \td \Mc_0^{-1} z + \td V_0 )
 - u_0( \td \Mc_0^{-1} z + \td V_0 ) \B)  
 + \B( \td H_0 
  u_0( \td \Mc_0^{-1} z + \td V_0 ) - \bar U_0  \B) := J_1 + J_2. 
\eeq

For $J_1$, using a change of variable, \eqref{eq:cor_small1}, the assumption \eqref{eq:ass_cor} for $u_0 - \td u_0$, and the embedding \eqref{eq:inte:3}, we obtain 
\beq\label{eq:cor_J1}
|| J_1 ||_{\cH^K} \les || u_0 - \td u_0 ||_{\cH^K} \les \e_0,
\quad \lim_{\e_0 \to 0} \max_{|z| \leq 1} | \na^i J_1 |  = 0, \quad \mathrm{for \ } i = 0,1, 2,3.
\eeq

Denote $H_1 =\td H_0 H_0^{-1}, \Mc_1 = \Mc_0 \Mc_0^{-1}, V_1 = \Mc_0 ( \td V_0 - V_0 ) $. For $J_2$, using $u_0(z) = H_0^{-1}U_0( \Mc_0 ( z - V_0))$ \eqref{eq:init_res} and a change of variable, we obtain 
\[
\bal
J_2 & = \td H_0 H_0^{-1} U_0( \Mc_0 \Mc_0^{-1} z  + \Mc_0 ( \td V_0 - V_0 ) ) - \bar U_0
= H_1 U_0( \Mc_1 z + V_1) - \bar U \\
& =  H_1 (U_0 - \bar U) (\Mc_1 z + V_1)
+ \B( H_1 \bar U(\Mc_1 z + V_1) - \bar U \B) := J_{21} + J_{22}. 
\eal
\]

From \eqref{eq:cor_small2}, we obtain that $\Mc_1 - 1 =o(1), H_1 - 1 = o(1), V_1 = o(1)$. 
Since $|| U_0 - \bar U_0 ||_{\cE_K} < \nu$, by choosing $\e_0$ small enough, we yield 
\beq\label{eq:cor_J21}
|| J_{21} ||_{\cE_K} < \nu .
\eeq

Using the smoothness of $\bar U$ and the embedding \eqref{eq:inte:3}, we obtain 
\beq\label{eq:cor_J22}
\lim_{\e_0 \to 0} || J_{22} ||_{\cH^K} + \max_{|z| \leq 1, i\leq 3} | \na^i J_{22} |   = 0.
\eeq

Next, we show that  for $f$ with $\na^j f(0) = 0, j \leq 2$ and $i\leq K$, we have 
\beq\label{eq:cor_pf3}
 || \na^i f ||_{\rho_i} \les || f ||_{\cH^K} .
\eeq

Using the definition of \eqref{wg:rho} $\rho_K$, $\cE_K$ \eqref{norm:Ek},  $g_k, \cH^K$ \eqref{norm:Hk}, and embedding \eqref{eq:inte:3}, for $f$ with $\na^j f(0) = 0, j \leq 2$, we have 
\[
\bal
 & ||  \na^i f ||_{\rho_i}
 \les || \na^3 f ||_{L^{\infty}} || \ |x|^{ 3 - i } \one_{|x| \leq 1} ||_{\rho_i}
  +  ||  \na^i f ||_{g_i}
  \les  || f ||_{\cH^K} , \quad i \leq 3,  \\
  & ||  \na^i f ||_{\rho_i} 
\les ||  \na^i f ||_{L^{\infty}} || \one_{|x| \leq 1} ||_{\rho_i}
 +  ||  \na^i f ||_{ g_i} \les || f||_{\cH^K}, \quad i \leq (d+5)/2. 
\eal
\]

For $ \f{d+5}{2} < i \leq K$, $\rho_i$ and $g_i$ are equivalent and \eqref{eq:cor_pf3} follows from \eqref{eq:inte:Hk}. From the definition of $\td U_0$ and $J_i$, for $l\leq 2,i=1,2$, we obtain $\na^l J_{21}(0) = 0, \na^l( J_1 + J_{22})(0) = 0 $. Applying \eqref{eq:cor_pf3}, we obtain 
\[
 || J_1 + J_{22} ||_{\cE_K} \les || J_1 + J_{22} ||_{\cH^K} ,
 \]
 which goes to $0$ as $\e_0 \to 0$. Since the inequality \eqref{eq:cor_J21} is strict, for $\e_0$ small enough, we prove \eqref{eq:cor_small0}. Condition \eqref{def:EKFKbar} follows from a similar argument, we conclude the proof of Theorem \ref{cor:stab}.

\vspace{0.2in}

\paragraph{Acknowledgments:} The research of Y.Wang and T.Y.Hou was in part supported by NSF Grant DMS-2205590 and the Choi Family Gift Fund.
The work of J.Chen was in part supported by the NSF Grant DMS-2408098 and by the Simons Foundation. V.T. Nguyen is supported by the National Science and Technology Council of Taiwan.

\bibliographystyle{abbrv}

\bibliography{VTNbib}

\end{document}